\newtheorem{theorem}{Theorem}[section]
\newtheorem{corollary}[theorem]{Corollary}
\newtheorem{lemma}[theorem]{Lemma}
\newtheorem{proposition}[theorem]{Proposition}
\theoremstyle{definition}
\newtheorem{definition}[theorem]{Definition}
\theoremstyle{remark}
\newtheorem{remark}[theorem]{Remark}
\newtheorem{example}[theorem]{Example}
\numberwithin{equation}{section}
\begin{document}

\title[On geometric aspects of  slice regular functions]
{On geometric aspects of quaternionic and octonionic slice regular functions}

\author[X. P. Wang]{Xieping Wang}
\address{Xieping Wang, Department of Mathematics, University of Science and
Technology of China, Hefei 230026, China}

\email{pwx$\symbol{64}$mail.ustc.edu.cn}

%\thanks{This work was supported by the NNSF  of China (11071230), RFDP (20123402110068).}

\keywords{Quaternions; octonions; slice regular functions; boundary Schwarz lemma; Landau-Toeplitz type theorem; Cauchy estimate; growth, distortion and covering theorems; the maximum and minimum principles; the open mapping theorem.}
\subjclass[2010]{Primary 30G35; Secondary 30C80, 32A40, 31B25.}

\begin{abstract}
The purpose  of this paper is twofold. One is to enrich from a geometrical point of view the theory of octonionic slice regular functions. We first prove a boundary Schwarz lemma for slice regular self-mappings of the open unit ball  of the octonionic space. As applications, we obtain two Landau-Toeplitz type theorems for slice regular functions with respect to regular diameter and slice diameter respectively, together with a Cauchy type estimate. Along with these results, we introduce some  new and useful ideas, which also allow to prove the minimum principle and one version of the open mapping theorem. Another is to  strengthen a version of boundary Schwarz lemma first proved in \cite{WR} for quaternionic slice regular functions, with a completely new approach. Our quaternionic boundary Schwarz lemma with optimal estimate  improves considerably a well-known Osserman type estimate  and provides additionally  all the extremal functions.

\end{abstract}

\maketitle

\tableofcontents
\section{Introduction}
A promising theory of quaternion-valued functions of one quaternionic variable, now called slice regular functions, was initially introduced by Gentili and Struppa in \cite{GS1, GS2} and has been extensively studied over the past few years. It turns out to be significantly different from the more classical theory of regular (or monogenic) functions in the sense of Cauchy-Fueter and has  elegant applications to the functional calculus for noncommutative operators \cite{Co2}, Schur analysis \cite{ACS} and the construction and classification of orthogonal complex structures on dense open subsets of $\mathbb R^4$ \cite{GSS2014}.  Meanwhile, the theory of quaternionic slice regular functions has been extended to octonions in \cite{GS50}. The related theory of slice monogenic functions on domains in the paravector space $\mathbb R^{n+1}$ with values in the Clifford algebra $\mathbb R_n$ was introduced in \cite{Co6,Co3}. For the detailed up-to-date theory, we refer the reader to the monographs \cite{GSS, Co2} and the references therein. More recently, a connection between slice monogenic
and   monogenic functions was investigated in \cite{CLSS} by means of  Radon and dual Radon transforms. These function theories were also unified and generalized in \cite{Ghiloni1} by means of the concept of slice functions on the so-called quadratic cone of a real alternative *-algebra, based on a  slight modification of a well-known construction due to Fueter. The theory of slice regular functions on  real alternative *-algebras  is by now well-developed through a series of papers \cite{Ghiloni5, Ghiloni3,Ghiloni4,Ghiloni6,Ghiloni7} mainly due to Ghiloni and Perotti after their seminal work \cite{Ghiloni1}.

Exactly as the quaternions $\mathbb H$ being the only real \textit{associative normed division } algebra of dimension greater than 2, the theory of quaternionic slice regular functions should be the most beautiful one  among these function theories mentioned above. This is indeed the case. Such a special class of functions enjoys many nice properties similar to those of classical holomorphic functions of one complex variable.
Among them, we particularly mention the open mapping theorem, which is by now  known to hold only for slice regular functions on symmetric slice domains in $\mathbb H$ and allows us to prove the Koebe type one-quarter theorem for slice regular extensions to the  quaternionic ball of  univalent holomorphic functions on the unit disc of the complex plane, see Theorem \cite[Theorem 4.9]{RW2} for details. Furthermore, from  the analytical perspective, only for quaternionic slice regular functions, the regular product and regular quotient have  an intimate connection with the usual pointwise product and quotient. It is exactly this connection which plays a crucial role in many arguments, see the monograph \cite{GSS} and the recent paper \cite{WR} for more details.

Now a rather natural question arises of whether the nice properties enjoyed by quaternionic slice regular functions can be proved for  slice regular functions on domains over the octonions $\mathbb O$,  the only   \textit{normed division} algebra among alternative algebras over $\mathbb R$ of dimension greater than 4. In this paper, we introduce some  new and useful ideas to overcome difficulties brought by the non-commutativity and the non-associativity and in turn to show that to great extent, this is indeed the case.

First of all, we are going to   focus on the boundary behavior of octonionic slice regular functions, analogous to that of holomorphic functions. More specifically, we shall prove a boundary Schwarz lemma for slice regular self-mappings of the open unit ball $\mathbb B:=\{w\in \mathbb O: |w|<1\}$. To state its precise content, we first introduce some necessary notations. For a given element $\xi=x+y I\in\mathbb O$ with
$I$ being an element of the unit 6-dimensional of purely imaginary octonions
\begin{equation}\label{Pure-sphere}
\mathbb S=\big\{w \in \mathbb O:w^2 =-1\big\},
\end{equation}
we denote by $\mathbb S_{\xi}$ the associated 6-dimensional sphere (reduces to the point $\xi$ when $\xi$ is real):
$$\mathbb S_{\xi}:=x+y\, \mathbb S=\big\{x+y J: J\in\mathbb S\big\}.$$
It is well known  that $\mathbb S_{\xi}$ is exactly the conjugacy class of $\xi$ (cf. \cite[Proposition 2, Corollary 2.1]{Serodio}). For any three octonions $u, v, w\in\mathbb O$, the \textit{Lie bracket} of $u,v $ and the \textit{associator} of  $u, v, w$ are respectively defined to be
$$[u, v]:=uv-vu,\qquad [u, v, w]:=(uv)w-u(vw).$$
We also denote by $\langle$ , $\rangle$ the standard Euclidean inner product on $\mathbb O\cong\mathbb R^8$. Now our first main result can be stated as follows:

\begin{theorem} \label{BSL}
Let $\xi\in \partial\mathbb B$ and $f$ be a slice regular function on $\mathbb B\cup\mathbb S_{\xi}$ such that $f(\mathbb B)\subseteq\mathbb B$ and $f(\xi)\in \partial\mathbb B$. Then
\begin{enumerate}
  \item [(i)] it holds that
  \begin{equation}\label{Schwarz ineq1}
\begin{split}
\frac{\partial |f|}{\partial \xi}(\xi)&
=\overline{\xi}\Big(f(\xi)\overline{f'(\xi)}+\big[\bar{\xi}, f(\xi)\overline{R_{\bar{\xi}}R_{\xi}f(\xi)}\,\big]+2\big[\xi, f(\xi), R_{\bar{\xi}}R_{\xi}f(\xi)\big]\Big)
\\
&\geq\frac{\big|1-\big\langle f(0), f(\xi)\big\rangle\big|^2}{1-|f(0)|^2},
\end{split}
\end{equation}
where $\frac{\partial |f|}{\partial \xi}(\xi)$ is the directional derivative of $|f|$ along the direction $\xi$  at the boundary point $\xi\in \partial\mathbb B$;

  \item [(ii)] if further  $f(0)=0$ and $f(\xi)=\xi$, then
\begin{equation}\label{Schwarz ineq2}
\frac{\partial |f|}{\partial \xi}(\xi)= f'(\xi)-\big[\xi, R_{\bar{\xi}}R_{\xi}f(\xi)\,\big] \geq\frac{2}{1+{\rm{Re}}f'(0)}.
\end{equation}
Moreover, equality holds for  the inequality in {\rm{(\ref{Schwarz ineq2})}} if and only if $f$ is of the form
\begin{equation}\label{f-expression}
f(w)=w\big(1-wa\bar{\xi}\,\big)^{-\ast}\ast\big(w\bar{\xi}-a\big)
\end{equation}
for some constant $a\in [-1,1)$.
\end{enumerate}
\end{theorem}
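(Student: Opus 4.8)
The plan is to push everything onto the single slice $\mathbb C_I$ through $\xi$, where $I$ is the imaginary unit of $\xi$ (when $\xi\in\mathbb R$, i.e. $\xi=\pm1$, work on an arbitrary slice), and there to combine the Cauchy--Riemann equations for $f|_{\mathbb C_I}$ with the classical Schwarz--Pick theory for holomorphic maps of a disc into a Euclidean ball. By the octonionic splitting lemma one may write $f|_{\mathbb B\cap\mathbb C_I}=\sum_{k=0}^{3}F_k\ell_k$ with $\mathbb C_I$-valued holomorphic functions $F_k$ and an orthonormal frame $\{\ell_k\}$, chosen so that the four summands are pairwise orthogonal and $|f|^2=\sum_k|F_k|^2$; thus $F:=(F_0,\dots,F_3)$ is a holomorphic map of the disc $\mathbb B\cap\mathbb C_I$ into the unit ball of $\mathbb C_I^{4}$, with $\|F\|=|f|$ and $|f|^2=\langle F,F\rangle$.

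For the identity in (i): since $|f(\xi)|=1$ one has $\frac{\partial|f|}{\partial\xi}(\xi)=\tfrac12\frac{\partial}{\partial\xi}(f\overline f)(\xi)$, and applying the Leibniz rule for the $\mathbb R$-bilinear octonionic product together with the relation $\frac{\partial f}{\partial\xi}(\xi)=\xi f'(\xi)$ (a consequence of the slice Cauchy--Riemann equations) gives $\frac{\partial|f|}{\partial\xi}(\xi)=\operatorname{Re}\bigl(\overline\xi\,(f(\xi)\,\overline{f'(\xi)})\bigr)$; formula \eqref{Schwarz ineq1} is then obtained by computing out the imaginary part of the octonion $\overline\xi\,(f(\xi)\,\overline{f'(\xi)})$ — the one place where non-commutativity and non-associativity really intervene — and packaging it into the Lie-bracket and associator terms, in which the operators $R_{\overline\xi}R_\xi$ record the reordering of the relevant products. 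For the inequality in (i): composing $F$ with the automorphism of the target ball carrying $F(0)$ to the origin, the Schwarz lemma for holomorphic maps $\mathbb B\cap\mathbb C_I\to\mathbb C_I^{4}$ together with the standard identity $1-\|\phi_a(b)\|^2=\frac{(1-\|a\|^2)(1-\|b\|^2)}{|1-\langle a,b\rangle|^2}$ for ball automorphisms yields
\[
\frac{1-\|F(w)\|^2}{1-|w|^2}\ \ge\ \frac{\bigl|1-\langle F(0),F(w)\rangle\bigr|^2}{1-\|F(0)\|^2}\qquad(w\in\mathbb B\cap\mathbb C_I).
\]
Taking $w=(1-t)\xi$ and letting $t\to0^+$, the left side tends to $\frac{\partial|f|}{\partial\xi}(\xi)$ (by the chain rule and $\|F(\xi)\|=1$) and the right side tends to $\frac{|1-\langle F(0),F(\xi)\rangle|^2}{1-|f(0)|^2}$, where $\langle\,,\rangle$ is the Hermitian product of $\mathbb C_I^{4}$; since $|1-z|^2\ge|1-\operatorname{Re}z|^2$ for $z\in\mathbb C_I$ and $\operatorname{Re}\langle F(0),F(\xi)\rangle=\langle f(0),f(\xi)\rangle$, this gives \eqref{Schwarz ineq1}.

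For (ii): if $f(0)=0$, factoring out the zero at the origin writes $f=\operatorname{id}*h$ with $h$ slice regular on $\mathbb B\cup\mathbb S_\xi$; the $*$-product with $\operatorname{id}$ is pointwise, $(\operatorname{id}*h)(w)=w\,h(w)$ (since the subalgebra generated by each $w$ is commutative and associative), so $|f(w)|=|w|\,|h(w)|$ and the slice Schwarz inequality $|f(w)|\le|w|$ forces $|h|\le1$ on $\mathbb B$, whence $h(\mathbb B)\subseteq\mathbb B$ unless $h$ is constant (the latter giving $f=\operatorname{id}$); moreover $h(0)=f'(0)$ and, from $\xi h(\xi)=f(\xi)=\xi$, $h(\xi)=\overline\xi\,f(\xi)=1$. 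Differentiating $|f|=|w|\,|h|$ radially at $\xi$ gives $\frac{\partial|f|}{\partial\xi}(\xi)=1+\frac{\partial|h|}{\partial\xi}(\xi)$, and applying part (i) to $h$ yields
\[
\frac{\partial|f|}{\partial\xi}(\xi)\ \ge\ 1+\frac{(1-\operatorname{Re}f'(0))^2}{1-|f'(0)|^2}\ \ge\ 1+\frac{1-\operatorname{Re}f'(0)}{1+\operatorname{Re}f'(0)}\ =\ \frac{2}{1+\operatorname{Re}f'(0)},
\]
which is \eqref{Schwarz ineq2}; the identity there is \eqref{Schwarz ineq1} specialized by $f(0)=0$ and $f(\xi)=\xi$. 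Equality forces $f'(0)$ to be real (equality in $|f'(0)|\ge|\operatorname{Re}f'(0)|$), say $f'(0)=-a$ with $a\in[-1,1)$, together with equality in the Schwarz--Pick estimate for $h$; the usual boundary rigidity — resting here on the minimum principle for slice regular functions — pins $h$ down as the regular Möbius transformation of $\mathbb B$ determined by its values at $0$ and $\xi$, so that $f=\operatorname{id}*h$ is exactly \eqref{f-expression}, while a direct computation checks that each such $f$ achieves equality.

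The two points I expect to be delicate are (a) arranging the octonionic splitting so that $|f|$ restricted to a slice is faithfully realized as the norm of a ball-valued holomorphic map — non-associativity restricts which frames $\{\ell_k\}$ work — and (b) the rigidity in (ii), which needs both the octonionic minimum principle and the classification of regular Möbius transformations over $\mathbb O$; these, together with the twisted-product bookkeeping behind $\operatorname{id}*h=w\,h(w)$ and behind the boundary Leibniz rule for $|f*g|$, are precisely the new ideas the paper advertises.
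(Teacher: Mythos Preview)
Your derivation of the identity in (i) has a genuine gap. You obtain $\frac{\partial|f|}{\partial\xi}(\xi)=\operatorname{Re}\bigl(\overline\xi\,f(\xi)\overline{f'(\xi)}\bigr)$ from the radial direction alone and then assert that the Lie-bracket and associator terms arise by ``packaging the imaginary part'', with $R_{\overline\xi}R_\xi$ merely ``recording the reordering of products''. That is not what $R_{\overline\xi}R_\xi f(\xi)$ is: it is the second coefficient in the spherical expansion of $f$ at $\xi$, an analytic datum of $f$, not an algebraic bookkeeping device. The paper's argument is essentially different. It computes $\frac{\partial|f|^2}{\partial v}(\xi)$ for \emph{every} direction $v$, using the formula $\frac{\partial f}{\partial v}(\xi)=v\,\partial_s f(\xi)+(\xi v-v\overline\xi)\,R_{\overline\xi}R_\xi f(\xi)$, and shows that it equals $2\langle v,X\rangle$ where $X$ is the full octonion appearing in \eqref{Schwarz ineq1}. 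The vanishing of the tangential derivatives of $|f|^2$ (since $|f|$ attains its maximum at $\xi$) then forces $X$ to be a real multiple of $\xi$, so $\overline\xi X$ is real and equals the radial derivative. Your radial computation alone cannot see that $\overline\xi X$ is real, nor can it produce the extra terms, which enter only through non-radial directions.

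Your inequality argument for (ii) --- factor $f(w)=w\,h(w)$, apply (i) to $h$, and use $1+\frac{(1-\operatorname{Re}f'(0))^2}{1-|f'(0)|^2}\ge\frac{2}{1+\operatorname{Re}f'(0)}$ --- is correct and in fact cleaner than what the paper does (the paper splits $f$ into a quaternionic piece $F$ and an orthogonal piece $G$ and invokes the quaternionic boundary Schwarz lemma, proved separately via Julia theory, on $F$). However, your treatment of the equality case is incomplete. Equality in your chain forces $f'(0)\in\mathbb R$ \emph{and} equality in part (i) applied to $h$; but part (i) carries no equality characterization, and the ``usual boundary rigidity'' you invoke for octonionic slice regular self-maps is not available --- neither a Julia--Wolff--Carath\'eodory theorem nor a classification of regular M\"obius transformations over $\mathbb O$ is at hand. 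The paper circumvents this precisely by the quaternionic reduction: equality in the quaternionic corollary pins down $F$, and then $|G|^2\le 1-|F|^2$ together with the maximum principle forces $G\equiv 0$.
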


For the precise definitions of $R_{\bar{\xi}}R_{\xi}f(\xi)$ and $\ast$-product appeared in Theorem \ref{BSL}, see (\ref{de:Rf1}), (\ref{de:Rf2}) and Sect. 2 below. It turns out that  $R_{\bar{\xi}}R_{\xi}f(\xi)$ is intimately related to  the second coefficient in a new series expansion of slice regular function $f$, see \cite[Theorem 4.1]{Stop3} for the quaternionic case and \cite[Theorem 5.4]{Ghiloni3} for the real alternative *-algebra case.
Moreover, it is well worth remarking here that in contrast to the complex case, the Lie bracket in (\ref{Schwarz ineq2})  does  not necessarily vanish and $f'(\xi)$ may not be a real number. An explicit counterexample can be found in Example \ref{Non-real-example} below.

Although the key ingredient in proving Theorem $\ref{BSL}$ is still a careful consideration of the geometrical information of $f$ at its prescribed contact point $\xi$ (i.e. $f(\xi)\in \partial\mathbb B$), two crucial difficulties arise in the octonionic setting. One is that the case of $\xi$ being a contact point of $f$ (i.e. $f(\xi)\in\partial\mathbb B$) can not be reduced to the case of $\xi$ being a boundary fixed point of $f$ (i.e. $f(\xi)=\xi\in\partial\mathbb B$); the other is that, because of the lack of associativity in  $\mathbb O$, there is in general no nice connection between the regular product and the usual pointwise product unlike in the quaternionic setting. The peculiarities of the non-associative setting produce a completely new phenomenon, called the \textit{camshaft effect} in \cite{Ghiloni2}: an isolated zero of a slice regular function $f$ is not necessarily a zero for the regular product $f\ast g$ of $f$ with another slice  regular function $g$. Therefore, the method used in our recent work \cite{WR} fails in the present setting to get some satisfactory and even sharp estimate. Fortunately, we can come up with an effective approach to overcome partly these technical difficulties mentioned above. In the quaternionic case, with a completely new approach, we can   strengthen a result first proved in \cite{WR} by the author and Ren, analogous to Theorem \ref{BSL}. Our quaternionic boundary Schwarz lemma with optimal estimate involves a Lie bracket, improves considerably a well-known Osserman type estimate  and provides additionally  all the extremal functions; see Theorem \ref{Generalized Herzig} below for details.

Let $f$ be as described in Theorem $\ref{BSL}$. Notice that   the directional derivative $\frac{\partial f}{\partial \xi}(\xi)$ of $f$ along the direction $\xi$  at the boundary point $\xi\in \partial\mathbb B$ satisfies that
$$\frac{\partial f}{\partial \xi}(\xi)=\xi f'(\xi),$$
thus the obvious inequality
$$\Big|\frac{\partial f}{\partial \xi}(\xi)\Big|\geq \frac{\partial |f|}{\partial \xi}(\xi)$$ results in:

\begin{corollary}
Let $\xi\in \partial\mathbb B$ and $f$ be a slice regular function on $\mathbb B\cup\{\xi\}$ such that $f(\mathbb B)\subseteq\mathbb B$, $f(0)=0$ and $f(\xi)=\xi$. Then
\begin{equation*}
|f'(\xi)|\geq\frac{2}{1+{\rm{Re}}f'(0)}.
\end{equation*}
Moreover, equality holds for the last  inequality if and only if $f$ is of the form
\begin{equation*}
f(w)=w\big(1-wa\bar{\xi}\,\big)^{-\ast}\ast\big(w\bar{\xi}-a\big)
\end{equation*}
for some constant $a\in [-1,1)$.
\end{corollary}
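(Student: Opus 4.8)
The plan is to deduce this Corollary directly from part (ii) of Theorem~\ref{BSL}, which is the substantive result; the Corollary is essentially a packaging of that estimate together with the elementary comparison between the modulus of a derivative and the derivative of a modulus. First I would observe that the hypotheses here are a special case of those in Theorem~\ref{BSL}(ii): taking $f$ slice regular on $\mathbb B\cup\{\xi\}$ with $f(\mathbb B)\subseteq\mathbb B$, $f(0)=0$ and $f(\xi)=\xi\in\partial\mathbb B$, and noting that $\mathbb S_\xi=\{\xi\}$ since $\xi\in\partial\mathbb B$ has modulus $1$ but the relevant slice structure at a boundary point one evaluates is just $\xi$ itself when one only needs the value at $\xi$ — more precisely, the regularity on $\mathbb B\cup\{\xi\}$ suffices to make sense of $f'(\xi)$ and of $R_{\bar\xi}R_\xi f(\xi)$ as one-sided/boundary objects, exactly as in the statement of the theorem.

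Next I would invoke the chain of (in)equalities already recorded in the excerpt just before the Corollary. The key identity is
\[
\frac{\partial f}{\partial \xi}(\xi)=\xi\, f'(\xi),
\]
so that $\bigl|\tfrac{\partial f}{\partial\xi}(\xi)\bigr|=|f'(\xi)|$ because $|\xi|=1$ and $\mathbb O$ is a normed algebra. Combining this with the pointwise inequality $\bigl|\tfrac{\partial f}{\partial\xi}(\xi)\bigr|\ge \tfrac{\partial|f|}{\partial\xi}(\xi)$ — which holds since differentiating $|f|$ along $\xi$ at a point where $f$ is nonzero gives $\tfrac{\partial|f|}{\partial\xi}(\xi)=\bigl\langle \tfrac{f(\xi)}{|f(\xi)|},\tfrac{\partial f}{\partial\xi}(\xi)\bigr\rangle\le \bigl|\tfrac{\partial f}{\partial\xi}(\xi)\bigr|$ by Cauchy--Schwarz, using $|f(\xi)|=1$ — and with the lower bound $\tfrac{\partial|f|}{\partial\xi}(\xi)\ge \tfrac{2}{1+\mathrm{Re}\,f'(0)}$ from (\ref{Schwarz ineq2}), one immediately gets
\[
|f'(\xi)|=\Bigl|\tfrac{\partial f}{\partial\xi}(\xi)\Bigr|\ge \tfrac{\partial|f|}{\partial\xi}(\xi)\ge \frac{2}{1+\mathrm{Re}\,f'(0)},
\]
which is the asserted inequality.

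For the equality statement I would argue as follows. If $f$ has the form $f(w)=w\bigl(1-wa\bar\xi\bigr)^{-\ast}\ast\bigl(w\bar\xi-a\bigr)$ with $a\in[-1,1)$, then by the equality clause of Theorem~\ref{BSL}(ii) we have $\tfrac{\partial|f|}{\partial\xi}(\xi)=\tfrac{2}{1+\mathrm{Re}\,f'(0)}$, and one checks (this is the one computational point worth a line) that for these explicit functions $f'(\xi)-[\xi,R_{\bar\xi}R_\xi f(\xi)]$ is actually real and equals $|f'(\xi)|$, so the comparison inequalities above are equalities too; hence $|f'(\xi)|=\tfrac{2}{1+\mathrm{Re}\,f'(0)}$. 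Conversely, suppose $|f'(\xi)|=\tfrac{2}{1+\mathrm{Re}\,f'(0)}$. Then both inequalities in the displayed chain must be equalities; in particular the lower bound in (\ref{Schwarz ineq2}) is attained, so by the equality clause of Theorem~\ref{BSL}(ii), $f$ is of the stated form. The main obstacle — really the only nontrivial point, since everything else is bookkeeping inherited from Theorem~\ref{BSL} — is checking that for the extremal functions the vector $[\xi,R_{\bar\xi}R_\xi f(\xi)]$ does not spoil the equality $|f'(\xi)|=f'(\xi)-[\xi,R_{\bar\xi}R_\xi f(\xi)]$ (recall the remark after Theorem~\ref{BSL} that this Lie bracket need not vanish in general), which one handles by a direct computation with the explicit $f$ in (\ref{f-expression}), or more cleanly by noting that for such $f$ the relevant slice is preserved and the computation reduces to the commutative (indeed one-complex-variable) situation where the bracket automatically vanishes.
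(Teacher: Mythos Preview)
Your proof is correct and follows exactly the paper's approach: the paper derives the Corollary in the two lines preceding its statement, namely from the identity $\frac{\partial f}{\partial\xi}(\xi)=\xi f'(\xi)$, the elementary inequality $\bigl|\frac{\partial f}{\partial\xi}(\xi)\bigr|\ge\frac{\partial|f|}{\partial\xi}(\xi)$, and Theorem~\ref{BSL}(ii). One small slip: your claim that $\mathbb S_\xi=\{\xi\}$ is false (for non-real $\xi$ this is a genuine $6$-sphere), but your subsequent ``more precisely'' clause is the right observation---regularity on $\mathbb B\cup\{\xi\}$ is what is actually used in the proof of the inequality in (\ref{Schwarz ineq2}) and in the equality analysis, since the extremal functions preserve the slice $\mathbb C_{I_\xi}$ and the computation there reduces to the commutative case.
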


We shall give some applications of Theorem \ref{BSL} to the study of geometric properties and rigidity of slice regular functions. We first recall the notion of \textit{regular diameter}, a suitable tool to measure the image of the open unit ball $\mathbb B$ of the octonionic space $\mathbb O$ through a slice  regular function.

\begin{definition}
Let $f$ be a  slice  regular function on $\mathbb B$ with Taylor expansion
$$f(w)=\sum\limits_{n=0}^{\infty}w^na_n.$$ For each $r\in(0, 1)$, the \textit{regular diameter} of the image of $r\mathbb B$ under $f$ is defined to be
\begin{equation}\label{regular-diam}
\widetilde{d}\big(f(r\mathbb B)\big):=\max_{u,v\in \overline{\mathbb B}}\max_{|w|\leq r}|f_u(w)-f_v(w)|,
\end{equation}
where
$$f_u(w):=\sum\limits_{n=0}^{\infty}w^n(u^na_n), \qquad f_v(w):=\sum\limits_{n=0}^{\infty}w^n(v^na_n).$$
The \textit{regular diameter} of the image of $\mathbb B$ under $f$ is defined to be
\begin{equation}\label{regular-diam}
\widetilde{d}\big(f(\mathbb B)\big):=\lim_{r\rightarrow 1^-}\widetilde{d}\big(f(r\mathbb B)\big).
\end{equation}
\end{definition}

As a first application of Theorem \ref{BSL}, we have the following Landau-Toeplitz type theorem  for octonionic slice  regular functions, whose quaternionic version was proved in \cite{Gen-Sar}.
\begin{theorem}\label{regular diam-LT}
Let $f$ be a slice  regular function on $\mathbb B$ such that
$$\widetilde{d}\big(f(\mathbb B)\big)=2.$$
Then
\begin{equation}\label{regular-diam01}
\widetilde{d}\big(f(r\mathbb B)\big)\leq 2r
\end{equation}
for each $r\in(0, 1)$, and
\begin{equation}\label{regular-diam02}
 |f'(0)|\leq 1.
\end{equation}
Moreover, equality holds in $(\ref{regular-diam01})$ for some $r_0\in(0,1)$, or in $(\ref{regular-diam02})$, if and only if $f$ is an affine function
$$f(w)=f(0)+wf'(0).$$
\end{theorem}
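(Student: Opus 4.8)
The plan is to derive everything from the Schwarz lemma for slice regular self-mappings of $\mathbb B$ together with the accompanying (sharp) Cauchy-type coefficient estimate, applied to a family of auxiliary slice regular functions built from the ``twists'' $f_u$. I would first record two elementary facts: each $f_u$ ($u\in\overline{\mathbb B}$) is slice regular on $\mathbb B$, being the power series $\sum_n w^n(u^na_n)$ whose coefficients are dominated in modulus by those of $f$; and, directly from the definition, $r\mapsto\widetilde d\big(f(r\mathbb B)\big)$ is continuous and non-decreasing, so $\widetilde d\big(f(r\mathbb B)\big)\le\widetilde d\big(f(\mathbb B)\big)=2$ for every $r\in(0,1)$. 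Then for fixed $u,v\in\overline{\mathbb B}$ I set $g_{u,v}:=\tfrac12(f_u-f_v)$; this is slice regular on $\mathbb B$, it vanishes at the origin since $f_u(0)=f_v(0)=a_0$, and $2\,|g_{u,v}(w)|=|f_u(w)-f_v(w)|\le\widetilde d\big(f(|w|\mathbb B)\big)\le2$, so $g_{u,v}(\mathbb B)\subseteq\overline{\mathbb B}$.

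Applying the Schwarz lemma to $g_{u,v}$ gives $|g_{u,v}(w)|\le|w|$ on $\mathbb B$, hence $|f_u(w)-f_v(w)|\le2|w|\le2r$ whenever $|w|\le r$, and taking the maximum over $u,v\in\overline{\mathbb B}$ yields $(\ref{regular-diam01})$. Since $g_{u,v}'(0)=\tfrac12(u-v)a_1$, the bound $|g_{u,v}'(0)|\le1$ together with the identity $|xy|=|x|\,|y|$ in $\mathbb O$ reads $|u-v|\,|f'(0)|\le2$; choosing $u=1$, $v=-1$ gives $(\ref{regular-diam02})$. For the converse direction of the equality statement, note that if $f$ is affine, $f(w)=a_0+wa_1$, then $f_u(w)-f_v(w)=w\big((u-v)a_1\big)$, so $\widetilde d\big(f(r\mathbb B)\big)=2r\,|a_1|$; the hypothesis $\widetilde d\big(f(\mathbb B)\big)=2$ then forces $|a_1|=|f'(0)|=1$, and with this equality holds in $(\ref{regular-diam01})$ for every $r\in(0,1)$ and in $(\ref{regular-diam02})$.

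The heart of the matter is then the implication $|f'(0)|=1\Longrightarrow a_n=0$ for all $n\ge2$. So I would assume $|a_1|=|f'(0)|=1$, fix $I\in\mathbb S$, and for $\theta\in(0,\pi)$ put $\lambda=e^{I\theta}\in\partial\mathbb B$. As above, $g_{\lambda,1}=\tfrac12(f_\lambda-f)$ is a slice regular self-mapping of $\mathbb B$ vanishing at the origin, with $g_{\lambda,1}(w)=\sum_{n\ge1}w^n c_n$ where $c_n=\tfrac12(\lambda^n-1)a_n$; in particular $|c_1|=\tfrac12|\lambda-1|$. The sharp Cauchy-type estimate for slice regular self-maps of $\mathbb B$ vanishing at the origin gives $|c_n|\le1-|c_1|^2$ for every $n\ge2$, i.e.
\[
\tfrac12\,\big|\lambda^{\,n}-1\big|\,|a_n|\;\le\;1-\tfrac14\,\big|\lambda-1\big|^2,\qquad n\ge2.
\]
Letting $\theta\to\pi^-$, so that $\lambda\to-1$ along the circle $\partial\mathbb B\cap\mathbb C_I$, one has $|\lambda-1|^2=2-2\cos\theta\to4$, so the right-hand side is $\asymp(\pi-\theta)^2\to0$, whereas $|\lambda^n-1|$ stays bounded below by a positive constant for odd $n$ and is $\asymp(\pi-\theta)$ for even $n$; in either case the displayed inequality forces $a_n=0$. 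Hence $f(w)=a_0+wa_1=f(0)+wf'(0)$.

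Finally, I would handle equality in $(\ref{regular-diam01})$: if $\widetilde d\big(f(r_0\mathbb B)\big)=2r_0$ for some $r_0\in(0,1)$, then by continuity and compactness the maximum defining it is attained at some $u_0,v_0\in\overline{\mathbb B}$ and $w_0$, and then $|g_{u_0,v_0}(w_0)|=r_0$; comparing with $|g_{u_0,v_0}(w_0)|\le|w_0|\le r_0$ shows $|w_0|=r_0$ and $|g_{u_0,v_0}(w_0)|=|w_0|$ at an interior non-zero point, so the rigidity part of the Schwarz lemma makes $g_{u_0,v_0}$ a rotation $w\mapsto w\lambda_0$, $|\lambda_0|=1$. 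Comparing first Taylor coefficients gives $\tfrac12(u_0-v_0)a_1=\lambda_0$, whence $|u_0-v_0|\,|a_1|=2$, forcing $|a_1|=1$ (as $|u_0-v_0|\le2$ and $|a_1|\le1$ by $(\ref{regular-diam02})$); so equality in $(\ref{regular-diam01})$ also yields $|f'(0)|=1$, and we are reduced to the previous paragraph. I expect the real obstacle to lie not in this outline but in its foundations: one must know that the Schwarz lemma, its rigidity statement, and the \emph{sharp} Cauchy-type estimate ($|c_n|\le1-|c_1|^2$, not merely $|c_n|\le1$) are all valid for octonionic slice regular functions, and that the $\ast$-product factorization underlying that estimate behaves well despite the non-associativity of $\mathbb O$ and the camshaft effect; the degenerating-pair limit $\lambda\to-1$ is elementary, but it is essential to use the asymmetric pairs $(\lambda,1)$, since antipodal pairs $(u,-u)$ constrain only the odd-order coefficients of $f$.
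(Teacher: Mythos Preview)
Your proof of the two inequalities and of the implication ``equality in $(\ref{regular-diam01})$ $\Rightarrow$ $|f'(0)|=1$'' is fine and essentially matches the paper. The divergence, and the gap, is entirely in the step ``$|a_1|=1\Rightarrow a_n=0$ for $n\ge 2$''.

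Your argument there hinges on the Wiener-type bound $|c_n|\le 1-|c_1|^2$ for the coefficients of $h=w^{-1}g_{\lambda,1}$. By averaging over $n$-th roots of unity in each slice (which does work, as you can check), this reduces to the Schwarz--Pick inequality $|\phi'(0)|\le 1-|\phi(0)|^2$ for octonionic slice regular self-maps $\phi:\mathbb B\to\overline{\mathbb B}$. In the quaternionic setting that inequality is known, but its proof goes through the regular M\"obius composition $\phi\mapsto(1-\phi*\overline{\phi(0)})^{-\ast}\ast(\phi(0)-\phi)$ and uses the pointwise interpretation of the $\ast$-product (Propositions~\ref{prop:RP}--\ref{prop:Quotient Relation}). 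Exactly this link breaks down over $\mathbb O$: the paper explicitly notes that ``there is in general no nice connection between the regular product and the usual pointwise product,'' and proves only the partial substitutes in Proposition~\ref{R-product wrt I-product}. So the sharp estimate you invoke is not available over $\mathbb O$, and the splitting-lemma route gives only the weaker bound $|c_n|\le\sqrt{1-|c_1|^2}$ coming from Schwarz--Pick for holomorphic maps $\mathbb B_I\to B^4\subset\mathbb C_I^4$. With that weaker bound your limiting argument $\theta\to\pi^-$ still kills the odd $a_n$, but for even $n$ one has $|\lambda^n-1|\asymp|\cos(\theta/2)|$, and the inequality degenerates to $|a_n|\le 1/(2\sin(\theta/2))\to 1/2$; you never reach $a_n=0$. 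Your closing caveat identifies the obstacle correctly---it is a genuine gap, not a formality.

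The paper takes a completely different route for this rigidity step. Assuming $|f'(0)|=1$, it first gets $f_{\mathrm{odd}}(w)=wf'(0)$, then for each non-real $\xi\in\mathbb B$ applies the boundary Schwarz lemma (Theorem~\ref{BSL}, the paper's main new tool) to $h(w)=\tfrac12\big(f(w)-f(-\xi)\big)$ on $|\xi|\,\overline{\mathbb B}$, where $|h|$ attains its maximum at $\xi$. From the tangential vanishing $\partial|h|^2/\partial(I_\xi\xi)(\xi)=0$, together with Lemmas~\ref{unitary multipliers} and~\ref{associator} to dispose of the Lie-bracket and associator terms, it extracts $\big\langle I_\xi,\, f'(\xi)\ast\overline{f'(0)}\big\rangle=0$ for every such $\xi$, and then invokes Lemma~\ref{Constant lemma} to conclude that $f'(\xi)\ast\overline{f'(0)}$ is a real constant, hence $f'\equiv f'(0)$. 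In short: the paper replaces the missing Schwarz--Pick by its boundary Schwarz lemma, which it can prove directly in the non-associative setting.
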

%The remaining part of the paper is organized as follows. In Sect. 2, we first establish some useful lemmas and then use them to prove Theorem $\ref{BSL}$. Sect. 3 is devoted to some connections and comparisons between Theorem $\ref{BSL}$ and some relevant results obtain in our recent work \cite{WR}.

Let $E, \Omega$ be two subsets of $\mathbb O$ and $f: \Omega\rightarrow \mathbb O$ a function. We denote by ${\rm{diam}}\, E=\sup_{z, w\in E}|z-w|$ the Euclidean diameter of $E$ and define the \textit{slice diameter} of the image of $\Omega$ under $f$ to be
\begin{equation}\label{slice-diam}
\widehat{d}\big(f(\Omega)\big):=\sup_{I\in\mathbb S}{\rm{diam}}\, f(\Omega_I),
\end{equation}
where $\Omega_I$ denotes the intersection $\Omega\cap\mathbb C_I$ of $\Omega$ and $\mathbb C_I$ the complex plane determined by $I$, and $\mathbb S$ is the same as in (\ref{Pure-sphere}).
Thus we have another version of Landau-Toeplitz type theorem with respect to slice diameter.
\begin{theorem}\label{slice diam-LT}
Let $f$ be a slice  regular function on $\mathbb B$ such that
$$\widehat{d}\big(f(\mathbb B)\big)=2.$$
Then
\begin{equation}\label{slice-diam01}
{\rm{diam}}\,\big(f(r\mathbb B_I)\big)\leq 2r
\end{equation}
for each $r\in(0, 1)$ and each $I\in\mathbb S$, and
\begin{equation}\label{slice-diam02}
 |f'(0)|\leq 1.
\end{equation}
Moreover, equality holds in $(\ref{slice-diam01})$ for some $r_0\in(0,1)$ and $I_0\in\mathbb S$, or in $(\ref{slice-diam02})$, if and only if $f$ is an affine function
$$f(w)=f(0)+wf'(0).$$
\end{theorem}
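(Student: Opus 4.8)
The plan is to reduce the whole statement, slice by slice, to a one-complex-variable fact, and then to transport the extremal conclusion back to $\mathbb{B}$ by the identity principle; in contrast with the regular-diameter case (Theorem~\ref{regular diam-LT}), here one is allowed to stay inside a single slice, which is precisely what bypasses the specifically octonionic obstructions. Fix $I\in\mathbb{S}$ and consider the restriction $f_I:=f|_{\mathbb{B}_I}$. Since $|uw|=|u|\,|w|$ on $\mathbb{O}$ and, by Artin's theorem, $I(Iw)=-w$, left multiplication by $I$ is an orthogonal complex structure on $(\mathbb{O},\langle\,,\,\rangle)$, turning it into a $4$-dimensional complex Hilbert space whose norm is the octonionic modulus, and the Cauchy--Riemann-type equation defining slice regularity says exactly that $f_I$ is holomorphic as a map from $\mathbb{B}_I\cong\mathbb{D}$ into this $\mathbb{C}^4$. (Equivalently, one may invoke the octonionic splitting lemma to write $f_I=\sum_{k=0}^{3}F_kJ_k$ with holomorphic $F_k\colon\mathbb{B}_I\to\mathbb{C}_I$ and $|f_I(z)-f_I(w)|^2=\sum_k|F_k(z)-F_k(w)|^2$.) By the definition of the slice diameter and the hypothesis, $\mathrm{diam}\,f_I(\mathbb{B}_I)\le\widehat{d}\big(f(\mathbb{B})\big)=2$ for every $I\in\mathbb{S}$.

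Next I would apply the multidimensional Landau--Toeplitz theorem: if $F\colon\mathbb{D}\to H$ is holomorphic into a complex Hilbert space $H$ and $\mathrm{diam}\,F(\mathbb{D})\le2$, then $\mathrm{diam}\,F(r\mathbb{D})\le2r$ for every $r\in(0,1)$ and $\|F'(0)\|\le1$, with equality in either inequality exactly when $F$ is affine, $F(z)=F(0)+z\mathbf{v}$ with $|\mathbf{v}|=1$. (In case a direct citation is awkward, this follows from the usual device: for $a,b\in\overline{r\mathbb{D}}$ take the M\"obius involution $\tau$ of $\mathbb{D}$ swapping $a$ and $b$, with fixed point the hyperbolic midpoint $m$ of $a,b$; then $\psi:=\tfrac12(F-F\circ\tau)$ maps $\mathbb{D}$ into the closed unit ball of $H$ and vanishes at $m$, so the vector-valued Schwarz--Pick lemma bounds $\tfrac12|F(a)-F(b)|=\|\psi(a)\|$ by the pseudo-hyperbolic distance $\rho(a,m)$, which an elementary computation with the hyperbolic metric shows to be $\le r$; taking $a=b$ gives $\|F'(0)\|\le1$, and the equality cases come from the rigidity of the vector Schwarz lemma.) Applied to $f_I$ this immediately gives $(\ref{slice-diam01})$, and, since the complex derivative of $f_I$ at the origin is $a_1=f'(0)$ with $\|f_I'(0)\|=|a_1|=|f'(0)|$ (again by $|uw|=|u|\,|w|$), also $(\ref{slice-diam02})$.

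For the rigidity, suppose equality holds in $(\ref{slice-diam01})$ for some $r_0\in(0,1)$ and $I_0\in\mathbb{S}$; the supremum defining $\mathrm{diam}\,f(r_0\mathbb{B}_{I_0})$ is attained because $\overline{r_0\mathbb{B}_{I_0}}$ is compact in $\mathbb{B}$ and $f$ is continuous. Then the equality case of the multidimensional Landau--Toeplitz theorem forces $f_{I_0}$ to be affine, i.e. $f(z)=f(0)+zf'(0)$ for all $z\in\mathbb{B}_{I_0}$ --- and the same holds for every slice if instead equality holds in $(\ref{slice-diam02})$. Since $w\mapsto f(0)+wf'(0)$ is slice regular on $\mathbb{B}$ and coincides with $f$ on the non-discrete set $\mathbb{B}_{I_0}$, the identity principle for slice regular functions yields $f(w)=f(0)+wf'(0)$ on all of $\mathbb{B}$. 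Conversely, if $f(w)=f(0)+wf'(0)$ then $\mathrm{diam}\,f(r\mathbb{B}_I)=2r|f'(0)|$ for all $I$ and $r$, so $\widehat{d}\big(f(\mathbb{B})\big)=2|f'(0)|$; the hypothesis forces $|f'(0)|=1$, whence equality holds in both $(\ref{slice-diam01})$ and $(\ref{slice-diam02})$.

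The step I expect to be the main obstacle is the rigidity analysis above, which rests on having a genuinely sharp multidimensional Landau--Toeplitz theorem --- in particular on the fact that equality forces affineness even though the image lies in $\mathbb{C}^4$ rather than $\mathbb{C}$, which is more delicate than the scalar case. The virtue of the one-slice reduction is precisely that it converts the octonionic problem, with all of the difficulty coming from non-associativity --- the camshaft effect, the lack of a usable correspondence between the $\ast$-product and the pointwise product --- into a classical one-complex-variable statement on each $\mathbb{B}_I$, after which the identity principle propagates affineness from a single slice to $\mathbb{B}$; the remaining points (that $f_I'(0)$ recovers $f'(0)$ with the right norm, and that the relevant diameters are attained) are routine.
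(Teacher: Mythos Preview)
Your reduction to a single slice yields the two inequalities $(\ref{slice-diam01})$ and $(\ref{slice-diam02})$ without difficulty, but the rigidity step has a genuine gap: the ``multidimensional Landau--Toeplitz theorem'' you invoke is false in its equality clause. Consider
\[
F:\mathbb D\longrightarrow \mathbb C^2,\qquad F(z)=\bigl(z,\ \epsilon z^{2}\bigr),\quad 0<\epsilon<\tfrac12 .
\]
For $a,b\in\mathbb D$ one has $|F(a)-F(b)|^{2}=|a-b|^{2}\bigl(1+\epsilon^{2}|a+b|^{2}\bigr)$. Writing $s=|a-b|^{2}$, $t=|a+b|^{2}$, the parallelogram law gives $s+t\le 2(|a|^{2}+|b|^{2})<4$, and for $\epsilon<\tfrac12$ one checks $(4-t)(1+\epsilon^{2}t)\le 4$. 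Hence $\mathrm{diam}\,F(\mathbb D)=2$ and, by the same computation on $r\mathbb D$, $\mathrm{diam}\,F(r\mathbb D)=2r$ for every $r\in(0,1)$; moreover $|F'(0)|=|(1,0)|=1$. Yet $F$ is not affine. So neither equality in $(\ref{slice-diam01})$ on a single slice nor equality in $(\ref{slice-diam02})$ forces $f_{I_0}$ to be affine, and your passage ``the equality case of the multidimensional Landau--Toeplitz theorem forces $f_{I_0}$ to be affine'' does not go through. (Your Schwarz--Pick sketch cannot repair this: the vector-valued Schwarz lemma applied to $\psi=\tfrac12(F-F\circ\tau)$ gives the bound, but its equality clause only pins down $\psi$ along a M\"obius orbit, not $F$ itself.)

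What makes the theorem true is precisely that the hypothesis $\widehat d\bigl(f(\mathbb B)\bigr)=2$ constrains \emph{every} slice at once, and the slice-regular structure ties the slices together. The paper exploits this as follows. From $|f'(0)|=1$ one first gets $f_{\mathrm{odd}}(w)=wf'(0)$ via the (vector) Schwarz lemma. Then for each $\xi\in\mathbb B\setminus\{0\}$ the auxiliary function $h(w)=\tfrac12\bigl(f(w)-f(-\xi)\bigr)$ has $|h(\xi)|=r:=|\xi|$ equal to $\max_{w\in r\overline{\mathbb B}_{I_\xi}}|h(w)|$, so the \emph{tangential} derivative of $|h|^{2}$ along $I_\xi\xi\in T_\xi\bigl(\partial(r\mathbb B_{I_\xi})\bigr)$ vanishes. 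After the algebraic manipulations with associators (Lemmas~\ref{unitary multipliers} and~\ref{associator}) this yields $\bigl\langle I_\xi,\,f'(\xi)\overline{f'(0)}\bigr\rangle=0$ for every $\xi$, and Lemma~\ref{Constant lemma} then forces $f'\equiv f'(0)$. In other words, the rigidity is obtained by varying $\xi$ over \emph{all} slices, not by a one-slice Hilbert-space argument; the octonionic example behind the $F$ above, namely $f(w)=w+\epsilon w^{2}J$, is excluded because on the slice $\mathbb C_J$ its diameter exceeds $2$. Your approach would need an additional mechanism that couples the slices; as written, it stops short of the rigidity.
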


As a second application of Theorem \ref{BSL}, we have the following Cauchy type estimate, which is an analogue of an old result due to Poukka (see \cite{Poukka}):

\begin{theorem}\label{Poukka}
Let $f$ be a bounded  slice regular function on $\mathbb B$ and $d:={\rm{Diam}}\, f(\mathbb B)$ the Euclidean diameter of the image set $f(\mathbb B)$. Then the inequality
\begin{equation}\label{Cauchy type}
\frac{|f^{(n)}(0)|}{n!}\leq \frac12d
\end{equation}
holds for every positive  integer $n\in \mathbb N$. Moreover, equality holds in $(\ref{Cauchy type})$ for some $n_0\in \mathbb N$ if and only if
$$f(w)=f(0)+\frac12 w^{n_0}d\,e^{I\theta}$$
for some $I\in\mathbb S$ and some $\theta \in \mathbb R$.
\end{theorem}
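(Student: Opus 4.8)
The plan is to reduce the inequality $(\ref{Cauchy type})$ to the slice-diameter Landau--Toeplitz theorem (Theorem \ref{slice diam-LT}, itself a consequence of Theorem \ref{BSL}) by replacing $f$ with a suitable slice-averaged function, and then to extract the rigidity statement from the equality case of Theorem \ref{slice diam-LT} together with an elementary geometric observation and a Cauchy-type coefficient estimate.

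Write $f(w)=\sum_{m\geq0}w^ma_m$, so that $a_m=f^{(m)}(0)/m!$, and fix $n\in\mathbb N$. For $v\in\mathbb B_I$ with $v=|v|e^{I\alpha}$, put
\[
F(v):=\frac1n\sum_{j=0}^{n-1}f\big(e^{2\pi I j/n}\,|v|^{1/n}e^{I\alpha/n}\big).
\]
Since any two octonions generate an associative subalgebra (Artin's theorem), one checks that $F$ is independent of $I$ and of the chosen argument of $v$, that $F(v)=\sum_{k\geq0}v^ka_{nk}$, and hence that $F$ is slice regular and bounded on $\mathbb B$ with $F'(0)=a_n$ and $F(0)=f(0)$. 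As each value $F(v)$ is a convex combination of points of $f(\mathbb B)$, we get $F(\mathbb B)\subseteq\mathrm{conv}\,f(\mathbb B)$, so $\widehat d\big(F(\mathbb B)\big)\leq\mathrm{Diam}\,F(\mathbb B)\leq\mathrm{Diam}\big(\mathrm{conv}\,f(\mathbb B)\big)=\mathrm{Diam}\,f(\mathbb B)=d$. Applying Theorem \ref{slice diam-LT} to $\tfrac2cF$, where $c:=\widehat d\big(F(\mathbb B)\big)$ (if $c=0$ then $F$ is constant, $a_n=0$, and $(\ref{Cauchy type})$ is trivial), we obtain
\[
\frac{|f^{(n)}(0)|}{n!}=|F'(0)|\leq\tfrac12\,\widehat d\big(F(\mathbb B)\big)\leq\tfrac12\,\mathrm{Diam}\,F(\mathbb B)\leq\tfrac12\,d,
\]
which is $(\ref{Cauchy type})$.

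Suppose now that equality holds in $(\ref{Cauchy type})$ for some $n_0\in\mathbb N$; we may assume $d>0$, otherwise $f$ is constant and there is nothing to prove. Then, taking $n=n_0$, every inequality in the last display is forced to be an equality, so $\widehat d\big(F(\mathbb B)\big)=\mathrm{Diam}\,F(\mathbb B)=d$ and $|F'(0)|=\tfrac12\widehat d\big(F(\mathbb B)\big)$; by the equality case of Theorem \ref{slice diam-LT} the function $F$ is affine, $F(v)=f(0)+v\,a_{n_0}$, and therefore $F(\mathbb B)$ is exactly the open ball $B\big(f(0),|a_{n_0}|\big)=B\big(f(0),d/2\big)$. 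Because $F(\mathbb B)\subseteq\mathrm{conv}\,f(\mathbb B)$ while $\mathrm{Diam}\big(\mathrm{conv}\,f(\mathbb B)\big)=d$, and because a subset of $\mathbb R^8$ of diameter $d$ that contains an open ball of radius $d/2$ must lie inside the concentric closed ball of radius $d/2$ (given $x$ in the set, $x\neq f(0)$, let $t\uparrow d/2$ in the inequality $\big|x-\big(f(0)-t(x-f(0))/|x-f(0)|\big)\big|\leq d$), we conclude $f(\mathbb B)\subseteq\overline B\big(f(0),d/2\big)$.

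Hence $g:=\tfrac2d\big(f-f(0)\big)$ maps $\mathbb B$ into $\overline{\mathbb B}$, is slice regular, satisfies $g(0)=0$, and has Taylor coefficient of order $n_0$ equal to $\tfrac2d a_{n_0}$, which is of modulus one. Splitting $g_I=G_1+G_2J$ on each slice $\mathbb B_I$ (splitting lemma), with $G_1,G_2\colon\mathbb B_I\to\mathbb C_I$ holomorphic and $|G_1|^2+|G_2|^2\leq1$, integrating over the circles $|w|=\rho$ and letting $\rho\to1^-$ gives $\sum_{m\geq0}|g_m|^2\leq1$ for the Taylor coefficients $g_m$ of $g$; since $|g_{n_0}|=1$, all other $g_m$ vanish, so $g(w)=w^{n_0}\big(\tfrac2d a_{n_0}\big)$ and $f(w)=f(0)+w^{n_0}a_{n_0}$. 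Writing the unit octonion $\tfrac2d a_{n_0}$ as $e^{I\theta}$ for a suitable $I\in\mathbb S$ and $\theta\in\mathbb R$ (every unit octonion is of this form, $I$ being its normalized imaginary part) yields the asserted expression $f(w)=f(0)+\tfrac12 w^{n_0}d\,e^{I\theta}$. Conversely, for such an $f$ one has $f(\mathbb B)=f(0)+\tfrac d2\mathbb B=B\big(f(0),d/2\big)$, because $w\mapsto w^{n_0}$ maps $\mathbb B$ onto $\mathbb B$ and right multiplication by the unit octonion $e^{I\theta}$ is an isometry of $\mathbb O$ preserving $\mathbb B$; thus $\mathrm{Diam}\,f(\mathbb B)=d$ and $|f^{(n_0)}(0)|/n_0!=d/2$, so $(\ref{Cauchy type})$ is an equality. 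The delicate part of the argument is this equality discussion, where the rigidity carried by the single coefficient $a_{n_0}$ must be propagated to the whole of $f$; beyond Theorem \ref{slice diam-LT} the two decisive inputs are the passage from $f$ to the slice-averaged function $F$ — which has to be set up carefully because of the non-associativity of $\mathbb O$, and which is useful precisely because averaging does not increase the Euclidean diameter of the image — and the elementary but essential geometric fact that a diameter-$d$ set cannot contain an open ball of radius $d/2$ without being squeezed into the concentric closed ball, since it is this fact that converts the Landau--Toeplitz rigidity of $F$ into the statement that $f$ maps $\mathbb B$ into a ball of radius $d/2$, where the Schwarz/Cauchy-type coefficient estimate finally applies. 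I expect the construction and analysis of $F$ — in particular checking that it is well defined, slice regular on all of $\mathbb B$, and valued in $\mathrm{conv}\,f(\mathbb B)$ — to be the main technical obstacle.
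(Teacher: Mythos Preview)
Your argument is correct, but it is organized differently from the paper's. The paper first proves inequality~(\ref{Cauchy type}) by a direct Parseval-type computation: it integrates $|f_I(re^{I\theta})-f_I(re^{I(\theta+\pi/n)})|^2$ over the circle to obtain $\sum_k|1-e^{k\pi I/n}|^2|a_k|^2\le d^2$, which for $k=n$ gives $4|a_n|^2\le d^2$. Only in the equality case does the paper introduce the auxiliary function $h(w)=\sum_{k\ge0}w^ka_{kn_0}$ --- exactly your $F$ --- by reading off from the Parseval identity that all non-multiples of $n_0$ vanish, and then applies Theorem~\ref{slice diam-LT} to $h$ (noting $\mathrm{Diam}\,h(\mathbb B)=\mathrm{Diam}\,f(\mathbb B)=d$) to conclude $h$ is affine. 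You reverse the order: you build $F$ at the outset via slice-averaging over $n$-th roots, bound its slice diameter by the convex-hull inclusion, and invoke Theorem~\ref{slice diam-LT} already for the inequality; then in the equality case you use an additional geometric lemma (a diameter-$d$ set containing an open ball of radius $d/2$ lies in the closed concentric ball) to trap $f(\mathbb B)$, and finish with the Parseval argument. Both routes pass through the same auxiliary function and the same two ingredients (Theorem~\ref{slice diam-LT} and a Parseval identity), but yours trades the paper's elementary integral proof of the inequality for the geometric containment step in the rigidity part; the paper's ordering is computationally shorter, while yours gives a more explicit reason why \emph{all} of $f(\mathbb B)$, not just its convex hull's diameter, is constrained.

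One small slip: when you write ``splitting $g_I=G_1+G_2J$'' you are quoting the quaternionic splitting lemma; in $\mathbb O$ the correct form is $g_I=F_1+F_2J+(F_3+\overline{F_4}J)K$ with four holomorphic components (Lemma~\ref{eq:Splitting}). This does not affect your Parseval step --- the integral $\frac1{2\pi}\int|g(re^{I\theta})|^2d\theta=\sum_m r^{2m}|g_m|^2$ follows exactly as in the paper's computation~(\ref{Area-consideration00}) via Lemma~\ref{unitary multipliers} and Artin's theorem, without any splitting --- so the conclusion stands, but the sentence should be corrected.
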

It is noteworthy here that inequality $(\ref{Cauchy type})$ easily follows from the classical result due to Poukka together with   the splitting lemma for slice regular functions, or alternatively from Cauchy integral formula. The point here is to prove the last statement in the theorem.

Next we use some ideas developed in the proof of Theorem \ref{BSL} to prove other properties of octonionic slice regular functions, among which are the minimum principle and the open mapping theorem.

\begin{theorem}\label{W-Min-Principle}
Let $f:\Omega\rightarrow \mathbb O$ be a slice regular function on a symmetric slice domain $\Omega\subseteq\mathbb O$. If  $|f|$  attains a local minimum at some point $w_0\in \Omega\cap\mathbb R$, then either $f(w_0)=0$ or $f$ is constant.
\end{theorem}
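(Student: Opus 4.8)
The plan is to run a contradiction argument directly on the Taylor expansion of $f$ at the real point $w_0$, exploiting a feature absent from the complex (and even the $\mathbb C^n$-valued holomorphic) setting: on a slice through a real point the imaginary unit $I$ may be varied over the whole sphere $\mathbb S$, and this extra degree of freedom upgrades the usual first-order necessary condition for a minimum into something strong enough to force $f$ to be constant.

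First I would reduce to $w_0=0$. Translation by the real number $w_0$ preserves slice regularity, sends the symmetric slice domain $\Omega$ to another one containing the origin, and leaves $|f|$ unchanged; so assume $0\in\Omega\cap\mathbb R$ is a local minimum point of $|f|$ and $f(0)\neq0$, and let us show $f$ is constant. On a ball $B(0,\rho)\subseteq\Omega$ write $f(w)=\sum_{n\ge0}w^na_n$ with $a_0=f(0)\neq0$. If $f$ were not constant, then by the identity principle for octonionic slice regular functions on slice domains some $a_n$ with $n\ge1$ would be nonzero; let $m\ge1$ be the smallest such index. Now fix $I\in\mathbb S$ and put $w=re^{I\theta}\in\mathbb C_I$ with $0<r<\rho$. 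Since $\mathbb C_I$ is commutative, $w^m=r^m(\cos m\theta+I\sin m\theta)$, hence $w^ma_m=r^m\big(\cos m\theta\,a_m+\sin m\theta\,(Ia_m)\big)$, and expanding $|f(w)|^2=\langle f(w),f(w)\rangle$ in the Euclidean inner product of $\mathbb O\cong\mathbb R^8$ and isolating the lowest-order correction gives
\begin{equation*}
|f(w)|^2=|a_0|^2+2r^m\big(\langle a_0,a_m\rangle\cos m\theta+\langle a_0,Ia_m\rangle\sin m\theta\big)+O(r^{m+1}),
\end{equation*}
because $|w^ma_m|^2=r^{2m}|a_m|^2=O(r^{m+1})$ (as $m\ge1$) and every contribution involving a coefficient $a_n$ with $n>m$ is likewise $O(r^{m+1})$ near $0$.

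If the trigonometric bracket were negative for some $I$ and some $\theta_0$, then $|f(re^{I\theta_0})|<|f(0)|$ for all small $r>0$, contradicting local minimality at $0$. Hence for every $I\in\mathbb S$ the map $\theta\mapsto\langle a_0,a_m\rangle\cos m\theta+\langle a_0,Ia_m\rangle\sin m\theta$ is nonnegative on $\mathbb R$, and being a pure sinusoid this forces $\langle a_0,a_m\rangle=0$ and $\langle a_0,Ia_m\rangle=0$ for every $I\in\mathbb S$. Now I would invoke the algebra of $\mathbb O$: since $\mathbb S$ spans $\mathrm{Im}\,\mathbb O$ and $\mathbb O=\mathbb R\oplus\mathrm{Im}\,\mathbb O$, these relations say that $\langle a_0,qa_m\rangle=0$ for every $q\in\mathbb O$. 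But $\mathbb O$ is a division algebra and $a_m\neq0$, so right multiplication by $a_m$ is an $\mathbb R$-linear bijection of $\mathbb O$ — its inverse is right multiplication by $a_m^{-1}$, as $(qa_m)a_m^{-1}=q(a_ma_m^{-1})=q$ by alternativity, $a_m^{-1}$ being a real linear combination of $1$ and $a_m$. Thus $a_0$ is Euclidean-orthogonal to all of $\mathbb O$, i.e. $a_0=0$, contradicting $f(0)\neq0$. Therefore $a_n=0$ for all $n\ge1$, so $f\equiv a_0$ on $B(0,\rho)$, and hence $f\equiv f(w_0)$ on all of $\Omega$ by the identity principle.

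I expect the one genuinely delicate point — and the one worth emphasizing — to be the interplay of the two independent degrees of freedom $(I,\theta)$: controlling the tail $O(r^{m+1})$ uniformly in $\theta$ is routine, but it is precisely the freedom to move $I$ over the entire sphere $\mathbb S$ (unavailable for a $\mathbb C^n$-valued holomorphic map, where the naive vector-valued minimum principle is false, e.g. $z\mapsto(z,1)$) that promotes ``$a_m\perp a_0$ and $a_m\perp Ia_0$ for one $I$'' into ``$\mathbb O\,a_m\perp a_0$'', after which the division-algebra property closes the argument at once. A secondary caution is the reduction to a real base point: the conclusion genuinely fails at non-real $w_0=a+bI_0$ with $b\neq0$, and correspondingly the analogous computation there would only probe directions inside the single slice $\mathbb C_{I_0}$, yielding merely $\langle a_0,a_m\rangle=\langle a_0,I_0a_m\rangle=0$, which is far too weak.
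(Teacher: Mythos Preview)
Your proof is correct and coincides with the paper's first proof: both run a variational argument on the Taylor expansion at the real point, exploiting that the direction of approach can range over the full unit sphere of $\mathbb O$ to force the first nonvanishing higher coefficient to be zero. Your parametrization $w=re^{I\theta}$ (kill the sinusoid in $\theta$, then vary $I\in\mathbb S$) packages the endgame a bit more cleanly than the paper's $w=t\xi$ with $\xi\in\partial\mathbb B$, which needs a parity split on the order of vanishing; the paper also gives a second, genuinely different proof via the regular reciprocal $f^{-\ast}$ and the maximum principle.
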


We shall give two proofs of the preceding theorem. Both of them involve a variational argument. The first one also provides a completely new and quite elementary approach to the maximum and the minimum principles for holomorphic functions of one complex variable. The second one is to reduce this theorem to the maximum   principle (Theorem \ref{MP}), based on a nice connection between the Euclidean norm of the slice regular function $f$ and that of its regular reciprocal $f^{-\ast}$ (Proposition \ref{R-product wrt I-product}). Furthermore, it seems that the restriction of $w_0$ belonging to $\Omega\cap\mathbb R$ in the preceding theorem is superfluous. There are some additional obstacles to prove  the general case that $w_0\in\Omega\setminus\mathbb R$; see Remark \ref{Remark on WMP} below for more details. If this  restriction could be  removed, the general minimum principle would immediately follow and in turn would imply the open mapping theorem analogous to \cite[Theorem 7.7]{GSS}. Here we can merely prove the following version of the open mapping theorem using a method different from that of \cite[Theorem 7.4]{GSS}.

\begin{theorem}\label{Weak Open Mapping Theorem}
Let $f:\Omega\rightarrow \mathbb O$ be a nonconstant slice regular function on a symmetric slice domain $\Omega\subseteq \mathbb O$. If $U$ is a symmetric open subset of $\Omega$, then $f(U)$ is open. In particular, $f(\Omega)$ is open.
\end{theorem}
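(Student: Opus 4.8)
The plan is to reduce the statement to an open--mapping property for the holomorphic stem function of $f$, and to prove that property by a rank computation together with a normal--form argument; the symmetry of $U$ enters only to make the reduction possible. Since $\Omega$ is a symmetric slice domain, $f$ is represented by its holomorphic stem function $\mathcal F=F+\sqrt{-1}\,G$ on the connected quotient domain $\widetilde\Omega\subseteq\mathbb C$, with $f(x+yI)=F(x,y)+I\,G(x,y)$; as $f$ is nonconstant, $\mathcal F$ is a nonconstant holomorphic map, hence nonconstant on every nonempty open subset of $\widetilde\Omega$. Writing $\widetilde U\subseteq\widetilde\Omega$ for the image of the symmetric set $U$ under the quotient map, the representation formula yields
$$f(U)=\Phi\big(\widetilde U\times\mathbb S\big),\qquad \Phi(z,I):=F(z)+I\,G(z).$$
Moreover, by the composition identities $|I\,v|=|v|$ and $\langle I_1 v,I_2 v\rangle=\langle I_1,I_2\rangle\,|v|^2$, the map $I\mapsto I\,G(x,y)$ carries $\mathbb S$ isometrically onto a round $6$-sphere, so that each $f(\mathbb S_{x+yI})$ is a round (possibly one-point) Euclidean $6$-sphere, with centre $F(x,y)$ and radius $|G(x,y)|$, lying in the affine hyperplane through $F(x,y)$ orthogonal to $G(x,y)$. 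It therefore suffices to prove that $\Phi$ is an open map.

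First I would prove that $\Phi$ is a submersion --- and hence open --- away from a thin ``critical locus''. Fix $(z_0,I_0)$ with $G(z_0)\neq 0$ and let $\zeta_0$ be the point of $\mathbb C_{I_0}$ corresponding to $z_0$. The $I$-variation contributes the $6$-dimensional subspace $\{v\,G(z_0):v\in T_{I_0}\mathbb S\}$ --- the tangent space of the sphere $f(\mathbb S_{\zeta_0})$ at $\Phi(z_0,I_0)$ --- whose orthogonal complement is the $\mathbb C_{I_0}$-line spanned by $G(z_0)$; the $z$-variation contributes the $\mathbb C_{I_0}$-line spanned by $f'(\zeta_0)$, by $\partial_{\bar z}$-holomorphy of $\mathcal F$. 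These two pieces span $\mathbb O$ precisely when $f'(\zeta_0)$ is not $\mathbb C_{I_0}$-orthogonal to $G(z_0)$; to see this one uses the algebraic fact --- a consequence of $\langle ab,c\rangle=\langle b,\bar a\,c\rangle$ in the alternative algebra $\mathbb O$ --- that $\big(\mathbb C_{I_0}G(z_0)\big)^{\perp}$ is invariant under left multiplication by $I_0$, so that the relevant orthogonal projection is $\mathbb C_{I_0}$-linear. Thus $\Phi$ is open off the locus $\mathcal C$ on which $G$ vanishes or on which this $\mathbb C_{I_0}$-orthogonality holds.

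The remaining, and hardest, task is to show that $f(U)$ is still a neighbourhood of $\Phi(z_0,I_0)$ when $(z_0,I_0)\in\mathcal C$, i.e.\ to supply a normal form for $f$ near the $6$-sphere $f(\mathbb S_{\zeta_0})$. At a real point $z_0=x_0$ (where $G(z_0)=0$ automatically), the Taylor expansion lets one write $f(w)-f(x_0)=(w-x_0)^{k}u(w)$ with $u$ regular and $u(x_0)\neq 0$; since $\mathbb O$ is power-associative, the $k$-th power map $v\mapsto v^{k}$ on $\mathbb O$ is surjective and open at the origin, so $f$ maps a neighbourhood of $x_0$ onto a neighbourhood of $f(x_0)$. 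At points of $\mathcal C$ with nonreal $z_0$ one proceeds similarly, but using the spherical series expansion to factor out the real-coefficient characteristic polynomial of $\mathbb S_{\zeta_0}$ together with a regular unit, and then combining the resulting covering behaviour with the six free directions along the sphere. This non-associative, nonreal normal--form analysis is the main obstacle: the regular product is no longer governed by the pointwise product (the camshaft effect), a nonreal contact point cannot be reduced to a boundary fixed point, and the general minimum principle --- which in the holomorphic and quaternionic settings gives this conclusion immediately --- is not at our disposal; the covering argument must therefore be carried out directly and the ``sphere'' and ``slice'' pictures of $f(U)$ reconciled carefully. The final assertion is the special case $U=\Omega$.
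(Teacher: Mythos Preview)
Your approach is genuinely different from the paper's, and the submersion analysis of the map $\Phi(z,I)=F(z)+I\,G(z)$ is a nice geometric idea. However, the proposal has a real gap: the nonreal critical-locus case is not actually proved. You identify it correctly as ``the main obstacle'' and say that ``the covering argument must therefore be carried out directly and the `sphere' and `slice' pictures of $f(U)$ reconciled carefully'', but you do not carry it out. This is precisely where the octonionic pathologies (non-associativity, the camshaft effect, the absence of a pointwise relation between regular product and ordinary product) bite, and the paper itself flags in its Remark on the minimum principle that the relevant machinery is unavailable at nonreal points. A spherical-series factorisation $f(w)-f(\zeta_0)=\Delta_{\zeta_0}(w)^{m}\ast u(w)$ does not translate into an open $k$-th-power normal form the way it does over $\mathbb C$ or $\mathbb H$, so the sentence ``one proceeds similarly'' hides exactly the work that is missing. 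Even the real-point case is a bit quick: openness of $w\mapsto (w-x_0)^{k}u(w)$ at $x_0$ does not follow from openness of $v\mapsto v^{k}$ alone without an argument controlling the variable factor $u$.

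For comparison, the paper avoids all of this by passing to the symmetrization. It sets $\mathcal L_{f-\omega}:=\big((f-\omega)^{s}\big)'/(f-\omega)^{s}$, which is slice-preserving, and counts the zeros of $(f-\omega)^{s}$ inside a symmetric neighbourhood $V_{\alpha\delta}$ by the ordinary complex argument principle on any one slice $\mathbb C_I$. That integer-valued count is continuous in $\omega$ and positive at $\omega=\omega_0$, hence positive on a ball $B(\omega_0,\varepsilon)$; the known relation between the zero set of $(f-\omega)^{s}$ and that of $f-\omega$ then forces $f-\omega$ to vanish in $V_{\alpha\delta}\subseteq U$ for every such $\omega$. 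This route needs no differential-topological analysis of $\Phi$, no normal form at degenerate spheres, and works uniformly at real and nonreal preimages.
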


Theorem \ref{Weak Open Mapping Theorem} is sufficient for proving an octonionic version of the classical Koebe one-quarter theorem for  slice regular extensions to the  octonionic ball $\mathbb B$ of  univalent holomorphic functions on the unit disc of the complex plane.

\begin{theorem} \label{th:Koebe-theorem}
Let $f$ be a slice regular function on  $\mathbb B$ such that its restriction $f_I$ to $\mathbb B_I$ is injective and $f(\mathbb B_I)\subseteq \mathbb C_I $ for some $I\in \mathbb S $. If $f(0)=0$ and $f'(0)=1$, then it holds that
$$B(0,\frac14)\subset f(\mathbb B).$$
\end{theorem}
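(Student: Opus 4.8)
The plan is to mimic the classical complex-analytic proof of the Koebe one-quarter theorem, transplanting it to the slice $\mathbb{C}_I$ on which $f$ behaves like a univalent holomorphic function, and then using the octonionic open mapping theorem (Theorem \ref{Weak Open Mapping Theorem}) to promote the conclusion from $f(\mathbb{B}_I)$ to $f(\mathbb{B})$. First I would observe that the restriction $f_I\colon \mathbb{B}_I\to\mathbb{C}_I$ is, under the natural identification $\mathbb{C}_I\cong\mathbb{C}$, a univalent holomorphic function on the unit disc with $f_I(0)=0$ and $f_I'(0)=1$. The classical Koebe one-quarter theorem then applies verbatim and yields $B(0,\tfrac14)\cap\mathbb{C}_I\subseteq f(\mathbb{B}_I)$, i.e.\ the full Euclidean disc of radius $\tfrac14$ inside the slice $\mathbb{C}_I$ is covered. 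In particular $f(\mathbb{B})$ contains this two-dimensional disc.

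The next step is to boost this from a slice disc to the full eight-dimensional Euclidean ball $B(0,\tfrac14)$. Here I would invoke the representation (or ``affine'') structure of slice regular functions together with Theorem \ref{Weak Open Mapping Theorem}. Concretely: $f(\mathbb{B})$ is an open subset of $\mathbb{O}$ containing $0$, and for any $J\in\mathbb{S}$ the slice function machinery (the representation formula) expresses the values $f(x+yJ)$ in terms of $f(x+yI)$ and $f(x-yI)$; one checks that the image $f(\mathbb{B}_J)$ is obtained from $f(\mathbb{B}_I)$ by the same rigid transfer, so that $\mathrm{diam}$-type and covering-type statements on one slice pass to every slice. Thus $B(0,\tfrac14)\cap\mathbb{C}_J\subseteq f(\mathbb{B}_J)\subseteq f(\mathbb{B})$ for every $J\in\mathbb{S}$. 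Since $\mathbb{O}=\bigcup_{J\in\mathbb{S}}\mathbb{C}_J$ and every point of $B(0,\tfrac14)$ lies on some $\mathbb{C}_J$, this gives $B(0,\tfrac14)\subseteq f(\mathbb{B})$.

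The main obstacle I anticipate is the second step: justifying rigorously that the Koebe covering on the single slice $\mathbb{C}_I$ transfers to every other slice $\mathbb{C}_J$. Unlike the complex case, there is no symmetry of the domain that trivializes this, and one must be careful that the representation formula, which reconstructs $f$ on $\mathbb{B}_J$ from its values on $\mathbb{B}_I$, does not shrink the image. The honest way to handle it is probably not to transfer slice-by-slice but rather to argue directly: the restriction $f_I$ extends (by the Representation Theorem) to the unique slice regular $f$ on $\mathbb{B}$, and one shows that any $q\in B(0,\tfrac14)$ is a value of $f$ by considering the slice regular function $g(w)=f(w)-q$ (constant $q$ being slice regular), noting $g$ is nonconstant, applying Theorem \ref{Weak Open Mapping Theorem} so that $g(\mathbb{B})$ is open, and then using a degree-theoretic or connectedness argument on the slice $\mathbb{C}_{I_q}$ through $q$ to force $0\in g(\mathbb{B}_{I_q})$. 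Alternatively, and more cleanly, one reduces to the observation that $f(\mathbb{B})\supseteq\bigcup_{J}f(\mathbb{B}_J)$ and that each $f(\mathbb{B}_J)\supseteq B(0,\tfrac14)\cap\mathbb{C}_J$ follows because $f_J$ is itself a holomorphic map of the disc into $\mathbb{C}_J$ whose one-jet at $0$ is still $(0,1)$ — this last point being exactly where the representation formula and the hypothesis $f(\mathbb{B}_I)\subseteq\mathbb{C}_I$ are used to control $f$ off the slice $\mathbb{C}_I$.

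Finally I would remark that the role of Theorem \ref{Weak Open Mapping Theorem} is precisely to guarantee that $f(\mathbb{B})$ is genuinely open, so that no boundary subtleties intervene, and that the univalence of $f_I$ is used only through the classical one-variable Koebe theorem; no global injectivity of $f$ on $\mathbb{B}$ is needed or claimed.
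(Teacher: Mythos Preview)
Your proposal has a genuine gap in the second step. You claim that for every $J\in\mathbb{S}$ the restriction $f_J$ is a holomorphic map of $\mathbb{B}_J$ into $\mathbb{C}_J$, so that classical Koebe applies slice by slice. This is false in general: the hypothesis is only that $f(\mathbb{B}_I)\subseteq\mathbb{C}_I$ for one fixed $I$, and this does \emph{not} force $f(\mathbb{B}_J)\subseteq\mathbb{C}_J$ for $J\neq I$. For instance $f(w)=w+w^2I$ satisfies $f(0)=0$, $f'(0)=1$, $f(\mathbb{B}_I)\subseteq\mathbb{C}_I$, yet for $J\perp I$ one has $f(J)=J-I\notin\mathbb{C}_J$. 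The representation formula confirms this: writing $f_I(x+yI)=u(x,y)+Iv(x,y)$ with $u,v$ real, one finds that $f(x+yJ)$ has nonzero components along both $I$ and $JI$ in general. Moreover, even if $f_J$ did land in $\mathbb{C}_J$, you would still need $f_J$ to be injective to invoke Koebe, and injectivity is assumed only on $\mathbb{B}_I$. Your alternative degree-theoretic sketch does not supply the missing estimate either.

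The paper's argument avoids this slice-by-slice transfer entirely. It first establishes, via the convex combination identity (Proposition~\ref{convex combination identity}), the growth estimate $|f(w)|\geq |w|/(1+|w|)^2$ valid for \emph{all} $w\in\mathbb{B}$ (this is the first inequality in Theorem~\ref{Growth and Distortion Theorems}). In particular $\liminf_{w\to\partial\mathbb{B}}|f(w)|\geq\tfrac14$. Combining this with the openness of $f(\mathbb{B})$ (Theorem~\ref{Weak Open Mapping Theorem}) and the elementary topological fact recorded as Proposition~\ref{open} gives $B(0,\tfrac14)\subseteq f(\mathbb{B})$ directly. The hypothesis $f(\mathbb{B}_I)\subseteq\mathbb{C}_I$ enters only through the convex combination identity, which converts the classical one-variable growth bound on the slice $\mathbb{C}_I$ into a modulus bound on all of $\mathbb{B}$; that is the correct replacement for your attempted slice-by-slice Koebe.
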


The remaining part of this paper is organized as follows. In Sect. \ref{Preliminaries}, we set up basic notations and give some preliminary results from the theory of octonionic slice regular functions. In Sect. \ref{Proof of Theorem BSL}, we first establish some useful lemmas and then use them to prove Theorem \ref{BSL}. Sect. \ref{applications of Thm BSL} is devoted to the detailed proofs of Theorems \ref{regular diam-LT}, \ref{slice diam-LT} and \ref{Poukka}. In Sect. \ref{Geometric properties}, we first use some ideas developed in the proof of Theorem \ref{BSL} to prove   Theorems \ref{W-Min-Principle} and \ref{Weak Open Mapping Theorem}. We then use Theorem \ref{Weak Open Mapping Theorem} and a new convex combination identity (Proposition \ref{convex combination identity}) to prove the growth and distortion theorems (Theorem \ref{Growth and Distortion Theorems}) and Theorem \ref{th:Koebe-theorem}. Finally, in Sect. \ref{Quaternionic BSL} we use Julia lemma in \cite{WR} to prove a new and sharp boundary Schwarz lemma for quaternionic slice regular self-mappings of the open unit ball of the quaternions (Theorem \ref{Generalized Herzig}) and give some consequences. This paper is closed with a comparison of these results and the corresponding results for holomorphic self-mappings of the open unit disc on the complex plane.

\section{Preliminaries}\label{Preliminaries}
We recall in this section some necessary definitions and preliminary results on octonions and octonionic slice regular functions that we need later on.

\subsection{Octonions}
We denote by  $\mathbb O$ the non-commutative and non-associative division algebra of octonions (also called Cayley numbers). We refer to \cite{Lam,Okubo,Schafer} for a more complete insight on octonions; here we shall just recall what is need for our purpose. A simple way to describe its construction is to consider a basis
$\mathcal{E}=\{e_0=1, e_1,\ldots, e_6, e_7\}$ of $\mathbb R^8$ and relations
\begin{equation}\label{Generation rule01}
e_ie_j=-\delta_{ij}+\psi_{ijk}e_k, \quad i, j, k=1,2,\ldots,7,
\end{equation}
where $\delta_{ij}$ is the Kronecker delta, and $\psi_{ijk}$ is \textit{totally antisymmetric} in $i, j, k$, non-zero and equal to one for the seven combinations in the following set
$$\Sigma=\big\{(1, 2, 3), (1, 4, 5), (2, 4, 6), (3, 4, 7), (5, 3, 6), (6, 1, 7), (7, 2, 5)\big\}$$
so that every element in $\mathbb O$ can be uniquely written as $w=x_0+\sum_{k=1}^7x_ke_k$, with $x_k (k=1, 2, 3, 4)$ being real numbers. The full multiplication table is conveniently encoded in a 7-point projective plane, the so-called Fano mnemonic graph, shown in  Fig. 1 below. In the Fano mnemonic graph, the vertices are labeled by $1, \ldots, 7$
instead of $e_1, \ldots, e_7$. Each of the 7 oriented lines gives a quaternionic triple. The
product of any two imaginary units is given by the third unit on the unique line
connecting them, with the sign determined by the relative orientation.

\begin{figure}[H]\label{figure}
\centering\includegraphics[width=4cm]
{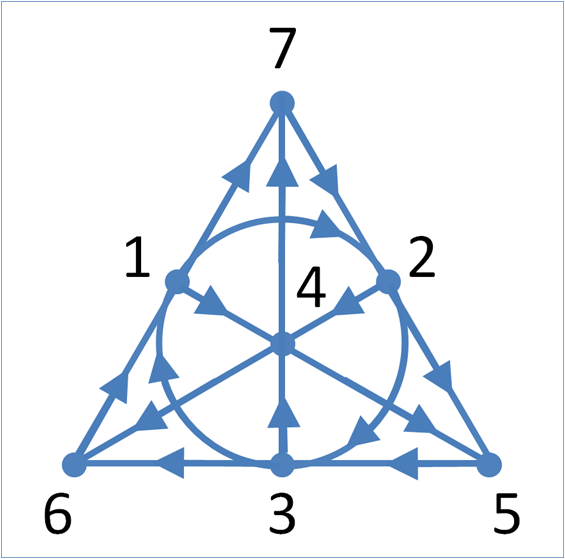}
\caption{Fano Mnemonic}
\end{figure}

Alternatively, $\mathbb O$ can be obtained from the quaternions $\mathbb H$ by the well-known \textit{Cayley-Dickson process}, which goes as follows. Let $\{1, e_1, e_2, e_3:=e_1e_2\}$ denote a real basis of $\mathbb H$. Each element $w\in\mathbb O$ can be written as $w=w_1+w_2e_4$, where $w_1, w_2\in\mathbb H$ and $e_4$ is a fixed imaginary unit of $\mathbb O$. The addition on $\mathbb O$ is defined componentwisely and the product is defined by
\begin{equation}\label{Generation rule02}
zw=(z_1+z_2e_4)(w_1+w_2e_4):=z_1w_1-\overline{w}_2z_2+(z_2\overline{w}_1+w_2z_1)e_4
\end{equation}
for all $z=z_1+z_2e_4$, $w=w_1+w_2e_4\in\mathbb O$, where $\overline{w}_1$, $\overline{w}_2$ are the conjugates of the quaternions $w_1, w_2\in\mathbb H$. Set $e_5:=e_1e_4$, $e_6:=e_2e_4$, $e_7:=e_3e_4=(e_1e_2)e_4$. Then $\{1, e_1, e_2,\ldots, e_7\}$ forms a real basis of $\mathbb O$, and one can easily verify that the product rule given by (\ref{Generation rule02}) is the same as the one in (\ref{Generation rule01}), and hence these two approaches indeed yield the same algebra $\mathbb O$.

For each $w=x_0+\sum_{k=1}^7x_ke_k\in\mathbb O$, the real number $x_0$ is called the \textit{real part} of $w$, and is denoted by ${\rm{Re}}(w)$, while $\sum_{k=1}^7x_ke_k$ is called the \textit{imaginary part} of $w$  and is denoted by ${\rm{Im}}(w)$. Moreover, we can define in a natural fashion the \textit{conjugate} $\overline{w}:=x_0-\sum_{k=1}^7x_ke_k\in\mathbb O$, and the \textit{squared norm} $|w|^2:=w\overline{w}=\overline{w}w=\sum_{k=0}^7x_k^2$ (and by the Artin's theorem below, $|zw|=|z||w|$ for any $z,w\in\mathbb O$), which is induced by the standard Euclidean inner product on $\mathbb O\cong \mathbb R^8$ given by
\begin{equation}\label{inner product on O}
\langle z, w\rangle=\textrm{Re}(z\overline{w})=\frac12(z\overline{w}+w\overline{z}), \qquad \forall\, z,w\in\mathbb O.
\end{equation}
Also,
\begin{equation}\label{inner and norm on O}
\langle z, w\rangle=\frac12\big(|z+w|^2-|z|^2-|w|^2\big), \qquad \forall\, z,w\in\mathbb O.
\end{equation}
The \textit{associator} of three octonions $u, v, w\in\mathbb O$ is defined to be
$$[u, v, w]:=(uv)w-u(vw),$$
which is \textit{totally antisymmetric}
in its arguments $u, v, w\in\mathbb O$ and has \textit{no real part}, i.e.
\begin{equation}\label{Real part free}
{\rm{Re}}\,[u, v, w]=0.
\end{equation}
Although the associator does not vanish in
general, the octonions do satisfy a weak form of associativity known as \textit{alternativity},
namely the so-called Moufang identities (cf. \cite[p. 120]{Harvey}; also \cite[p. 18]{Okubo}):
\begin{equation}\label{Monfang}
(uvu)w=u(v(uw)),\quad w(uvu)=((wu)v)u, \quad u(vw)u=(uv)(wu).
\end{equation}
The underlying reason for this is the so-called \textit{Artin's theorem}, which can be stated as follows.
\begin{theorem}{\rm{(cf. \cite[p. 18]{Okubo})}}\label{Artin-thm}
In an alternative algebra $A$, every  subalgebra generated by any two elements of $A$ is always associative.
\end{theorem}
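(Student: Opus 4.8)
We sketch the classical argument (see, e.g., \cite{Schafer} or \cite[p. 18]{Okubo}). Recall that $A$ being \emph{alternative} means $[x,x,y]=0=[y,x,x]$ for all $x,y\in A$, where $[u,v,w]=(uv)w-u(vw)$. Substituting $x\rightarrow x+z$ in these two identities yields $[x,z,y]+[z,x,y]=0$ and $[y,x,z]+[y,z,x]=0$ (linearization is legitimate since our base field is $\mathbb R$); thus the associator changes sign under a transposition of its first two, or of its last two, arguments, and since such transpositions generate $S_3$ it is a \emph{totally alternating} trilinear map. In particular the flexible law $[x,y,x]=0$ holds, so the triple product $xyx$ is unambiguous and the Moufang identities $(\ref{Monfang})$ are at our disposal (they follow from the linearized alternative laws, or may be regarded as already established); a typical further consequence is the identity $[xy,z,x]=[x,y,z]\,x$.

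Let $B$ denote the subalgebra of $A$ generated by two elements $a,b$. Since $[\,\cdot\,,\cdot\,,\cdot\,]$ is $\mathbb R$-trilinear and $B$ is spanned over $\mathbb R$ by the monomials in $a,b$ (iterated products of copies of $a$ and $b$, in all possible bracketings), Theorem \ref{Artin-thm} is equivalent to the assertion $(\ast)$: \emph{$[P,R,Q]=0$ for all monomials $P,R,Q$ in $a,b$}. The plan is to prove $(\ast)$ by a multi-parameter induction on the lengths of $P$, $R$, $Q$ --- primarily on the total length $N:=|P|+|R|+|Q|$, and secondarily on the length $|R|$ of the middle factor. For $N=3$ all three entries are generators, hence (there being only two generators) two of them coincide, and $(\ast)$ holds because the associator vanishes whenever two of its arguments agree. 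For the inductive step the principal tool is the \emph{Teichm\"uller identity}
\[
[wx,y,z]-[w,xy,z]+[w,x,yz]=w\,[x,y,z]+[w,x,y]\,z,
\]
valid in every nonassociative algebra. If $|R|\geq 2$, write $R=R_1R_2$ along its outermost product, insert $(w,x,y,z)=(P,R_1,R_2,Q)$ into the Teichm\"uller identity, and solve for $[P,R,Q]$: the associators that appear either have strictly smaller total length $N$ or have the same $N$ but a strictly shorter middle factor, so the induction advances. The remaining case, $|R|=1$ with (say) $P$ of length $\geq 2$, is the subtle one: peeling a factor off $P$ via the Teichm\"uller identity only produces associators that are no simpler in the measure $(N,|R|)$, and one must instead invoke the Moufang identities $(\ref{Monfang})$ --- or consequences such as $[xy,z,x]=[x,y,z]\,x$ --- to re-express $[P,R,Q]$ through associators of strictly smaller complexity. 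This reduces $(\ast)$, hence Theorem \ref{Artin-thm}, to the base case $N=3$.

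The linearization computations (that the associator is alternating, that the Moufang identities hold) are routine. The real work, and the place where the non-associativity of $A$ genuinely intervenes, is the combinatorial bookkeeping in the induction for $(\ast)$: one must equip triples of monomials with a complexity measure that is strictly decreased by \emph{every} permissible rewriting step, the delicate configuration being an associator whose middle entry is a single generator while an outer entry is long --- exactly the case that the Teichm\"uller identity cannot dispose of by itself and that forces the Moufang identities into play.
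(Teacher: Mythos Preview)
The paper does not prove Theorem \ref{Artin-thm}; it is quoted without proof from \cite[p. 18]{Okubo} and used as a black box throughout. So there is no ``paper's own proof'' to compare against, and your sketch stands or falls on its own.

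Your outline is the classical one (essentially the Artin--Zorn argument as in Schafer): linearize the alternative identities to get a totally alternating associator, then run an induction on monomial length using the Teichm\"uller identity, with Moufang-type identities to handle the residual case. The architecture is correct and the base case and the Teichm\"uller reduction for $|R|\geq 2$ are fine.

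However, the case $|R|=1$ with a long outer factor --- which you yourself flag as ``the subtle one'' --- is not actually carried out. You assert that the Moufang identities (or consequences such as $[xy,z,x]=[x,y,z]\,x$) re-express $[P,R,Q]$ in terms of associators of strictly smaller complexity, but you neither specify which identity applies to a general $[P,a,Q]$ (where $P=P_1P_2$ with $P_1,P_2$ arbitrary monomials, not necessarily sharing a factor with $Q$), nor exhibit a complexity measure that such a rewriting strictly decreases. The identity $[xy,z,x]=[x,y,z]\,x$ only helps when the first and third entries share a common outer factor, which is not the generic situation; the full argument requires a more careful simultaneous induction (typically on $|P|+|Q|$ together with a secondary parameter) and several auxiliary identities beyond the single one you cite. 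As written, the induction does not close.

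If you intend this as a pointer to the literature, that is exactly what the paper does, and it would be cleaner to say so. If you intend it as a proof, the $|R|=1$ step needs to be made explicit.
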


For each $\alpha\in \mathbb O$ with $|\alpha|=1$, we now consider two multipliers $\mathcal{L}_{\alpha}$ and $\mathcal{R}_{\alpha}$ on the octonionic space $\big(\mathbb O, \langle \,,\,\rangle\big)$ associated with $\alpha$, induced respectively by left and right multiplications, i.e.
$$\mathcal{L}_{\alpha}(w)=\alpha w,\qquad \mathcal{R}_{\alpha}(w)= w\alpha, \qquad \forall\, w\in\mathbb O.$$
Clearly, $\mathcal{L}_{\alpha}$ and $\mathcal{R}_{\alpha}$ are two $\mathbb R$-linear bijections with inverses $\mathcal{L}_{\alpha^{-1}}$ and $\mathcal{R}_{\alpha^{-1}}$, respectively. Moreover, they are two unitary operators on $\big(\mathbb O, \langle \,,\,\rangle\big)$ in virtue of equality $(\ref{inner and norm on O})$. Therefore, we have the following simple lemma.

\begin{lemma}\label{unitary multipliers}
For each $\alpha\in \mathbb O$ with $|\alpha|=1$, $\mathcal{L}_{\alpha}$ and $\mathcal{R}_{\alpha}$ are two unitary operators on the octonionic space $\big(\mathbb O, \langle \,,\,\rangle\big)$.
\end{lemma}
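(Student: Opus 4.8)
The plan is to reduce the claim to the two facts already isolated in the paragraph preceding the lemma: that $\mathcal{L}_{\alpha}$ and $\mathcal{R}_{\alpha}$ are $\mathbb{R}$-linear bijections of $\mathbb{O}$, together with the multiplicativity of the octonionic norm. First I would recall from Artin's theorem (Theorem \ref{Artin-thm}) that the subalgebra of $\mathbb{O}$ generated by any two elements is associative; since $\overline{w}\in\mathrm{span}_{\mathbb R}\{1,w\}$ and $\overline{\alpha}\in\mathrm{span}_{\mathbb R}\{1,\alpha\}$, all of $\alpha,w,\overline{w},\overline{\alpha}$ lie in the associative subalgebra generated by $\alpha$ and $w$, whence
$$(\alpha w)\overline{(\alpha w)}=(\alpha w)(\overline{w}\,\overline{\alpha})=\alpha\big(w\overline{w}\big)\overline{\alpha}=|w|^2\,\alpha\overline{\alpha}=|\alpha|^2|w|^2,$$
so that $|\mathcal{L}_{\alpha}(w)|=|\alpha w|=|w|$ whenever $|\alpha|=1$, and symmetrically $|\mathcal{R}_{\alpha}(w)|=|w\alpha|=|w|$.

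Next I would upgrade norm preservation to inner-product preservation by polarization. Using identity $(\ref{inner and norm on O})$ and the $\mathbb{R}$-linearity of $\mathcal{L}_{\alpha}$, for all $z, w\in\mathbb{O}$,
\begin{align*}
\big\langle \mathcal{L}_{\alpha}(z), \mathcal{L}_{\alpha}(w)\big\rangle
&=\frac12\Big(\big|\mathcal{L}_{\alpha}(z)+\mathcal{L}_{\alpha}(w)\big|^2-\big|\mathcal{L}_{\alpha}(z)\big|^2-\big|\mathcal{L}_{\alpha}(w)\big|^2\Big)\\
&=\frac12\Big(\big|\mathcal{L}_{\alpha}(z+w)\big|^2-|z|^2-|w|^2\Big)
=\frac12\Big(|z+w|^2-|z|^2-|w|^2\Big)=\langle z, w\rangle,
\end{align*}
and the identical computation applies with $\mathcal{R}_{\alpha}$ in place of $\mathcal{L}_{\alpha}$. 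Being bijective $\mathbb{R}$-linear isometries of the Euclidean space $(\mathbb{O},\langle\,,\,\rangle)$, the operators $\mathcal{L}_{\alpha}$ and $\mathcal{R}_{\alpha}$ are therefore unitary, which proves the lemma.

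I do not expect any genuine obstacle here; the only point requiring care is that one may \emph{not} write $|\alpha w|^2=|\alpha|^2|w|^2$ by blindly moving parentheses, and this is exactly where Theorem \ref{Artin-thm} (equivalently, the alternativity/Moufang relations $(\ref{Monfang})$) must be invoked. An alternative route would bypass polarization and instead verify directly that the adjoint of $\mathcal{L}_{\alpha}$ equals its inverse $\mathcal{L}_{\alpha^{-1}}=\mathcal{L}_{\overline{\alpha}}$, by checking $\langle \alpha z, w\rangle=\langle z, \overline{\alpha}w\rangle$ through $(\ref{inner product on O})$ and, once more, Artin's theorem; I would nonetheless present the polarization argument above, as it is the shortest.
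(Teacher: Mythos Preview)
Your proof is correct and follows essentially the same approach as the paper. The paper does not give a formal proof of this lemma at all: it simply observes in the paragraph immediately preceding the lemma that $\mathcal{L}_\alpha$ and $\mathcal{R}_\alpha$ are $\mathbb{R}$-linear bijections, that $|zw|=|z||w|$ by Artin's theorem, and that unitarity then follows ``in virtue of equality~(\ref{inner and norm on O})'', i.e.\ by polarization --- exactly the two steps you spell out in detail.
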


As a direct consequence of the preceding lemma, we have the following result.

\begin{lemma}\label{associator}
For any three octonions $u, v, w\in \mathbb O$, it holds that
\begin{equation}\label{associator01}
 \big\langle u,\, [u, v, w]\big\rangle=0.
\end{equation}
\end{lemma}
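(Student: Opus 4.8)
The plan is to deduce from Lemma \ref{unitary multipliers} the two \emph{adjunction identities}
$$\langle \alpha x,\, z\rangle = \langle x,\, \overline{\alpha}\, z\rangle, \qquad \langle x\alpha,\, z\rangle = \langle x,\, z\,\overline{\alpha}\rangle \qquad (\forall\, \alpha, x, z\in\mathbb O),$$
and then to evaluate $\langle u,\, (uv)w\rangle$ and $\langle u,\, u(vw)\rangle$ separately, showing that each of them equals $|u|^2\,{\rm{Re}}(vw)$; subtracting the two gives $\langle u,\, [u,v,w]\rangle = 0$.

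First I would establish the adjunction identities. When $|\alpha|=1$, Lemma \ref{unitary multipliers} says $\mathcal L_\alpha$ is unitary, i.e. $\langle \alpha x,\, \alpha y\rangle = \langle x,\, y\rangle$ for all $x,y\in\mathbb O$. Substituting $y=\overline{\alpha}\, z$ and using Artin's theorem (Theorem \ref{Artin-thm}) in the form $\alpha(\overline{\alpha}\,z) = (\alpha\overline{\alpha})z = z$ — legitimate since the subalgebra generated by $\alpha$ and $z$ is associative and contains $\overline{\alpha}$ — yields $\langle \alpha x,\, z\rangle = \langle x,\, \overline{\alpha}\, z\rangle$. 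The general $\alpha\neq 0$ case follows by homogeneity after normalizing $\alpha$ (replace $\alpha$ by $\alpha/|\alpha|$ and multiply through by $|\alpha|$), and $\alpha=0$ is trivial; the statement for $\mathcal R_\alpha$ is entirely analogous.

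Next, using symmetry of $\langle\,,\,\rangle$ together with these identities and again Artin's theorem to re-associate, I would compute
$$\langle u,\, (uv)w\rangle = \langle uv,\, u\overline{w}\rangle = \langle v,\, \overline{u}(u\overline{w})\rangle = |u|^2\,\langle v,\, \overline{w}\rangle = |u|^2\,{\rm{Re}}(vw),$$
where the first step is the $\mathcal R$-adjunction, the second the $\mathcal L$-adjunction, the third uses $\overline{u}(u\overline{w}) = (\overline{u}u)\overline{w} = |u|^2\overline{w}$ (Artin applied to $u,\overline{w}$), and the last uses $\langle v,\overline{w}\rangle = {\rm{Re}}(v\overline{\overline{w}}) = {\rm{Re}}(vw)$. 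Similarly,
$$\langle u,\, u(vw)\rangle = \langle vw,\, \overline{u}u\rangle = |u|^2\,\langle vw,\, 1\rangle = |u|^2\,{\rm{Re}}(vw).$$
Subtracting the two displays gives $\langle u,\, (uv)w - u(vw)\rangle = 0$, which is (\ref{associator01}).

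The only point requiring genuine care — rather than a true obstacle — is the bookkeeping forced by non-associativity: every re-association (such as $\overline{u}(u\overline{w}) = (\overline{u}u)\overline{w}$, or $\alpha(\overline{\alpha}z) = (\alpha\overline{\alpha})z$) must be justified by Artin's theorem, noting that $\overline{u}$ (resp. $\overline{\alpha}$) lies in the associative subalgebra generated by $u$ and $\overline{w}$ (resp. by $\alpha$ and $z$). Likewise one should remark that the passage from "$\mathcal L_\alpha$ unitary" to the adjunction identity for arbitrary $\alpha$ is not quite immediate and needs the normalization step above.
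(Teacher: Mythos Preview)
Your proof is correct and is in fact cleaner than the paper's. Both arguments rest on Lemma~\ref{unitary multipliers} and Artin's theorem, but they deploy them quite differently.

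The paper first reduces to the case $u=I\in\mathbb S$, writing a general $u=x+yI$ and using that the associator kills real arguments. For $I\in\mathbb S$ it then computes $\langle I,(Iv)w\rangle-\langle I,I(vw)\rangle$ by repeatedly applying the unitarity of $\mathcal L_I,\mathcal R_I$, invoking the Moufang identity~(\ref{Monfang}) to rewrite $(Iv)(wI)=I(vw)I$, and using the fact~(\ref{Real part free}) that the associator has no real part to drop a term. Your route bypasses all of this: once you extract the adjunction identities $\langle\alpha x,z\rangle=\langle x,\overline{\alpha}z\rangle$ and $\langle x\alpha,z\rangle=\langle x,z\overline{\alpha}\rangle$ from Lemma~\ref{unitary multipliers} (a step the paper never isolates explicitly), the two quantities $\langle u,(uv)w\rangle$ and $\langle u,u(vw)\rangle$ each collapse in two or three moves to $|u|^2\,\mathrm{Re}(vw)$, with Artin's theorem doing the single re-association $\overline{u}(u\overline{w})=|u|^2\overline{w}$. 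No Moufang, no appeal to~(\ref{Real part free}), no preliminary reduction to imaginary units. The price is that you must justify the adjunction identities carefully (which you do), but this is a one-time cost and the identities are useful in their own right. The paper's approach, on the other hand, keeps the argument closer to the specific algebraic identities~(\ref{Real part free}) and~(\ref{Monfang}) already recorded in the preliminaries.
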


\begin{proof}
First, we prove that
$$ \big\langle I,\, [I, v, w]\big\rangle=0$$
for every $I\in\mathbb S$, $v, w\in \mathbb O$. Indeed,
\begin{equation*}
\begin{split}
\big\langle I,\, [I, v, w]\big\rangle
&=\big\langle I,\, (Iv)w\big\rangle-\big\langle I,\,  I(vw)\big\rangle
\\
&=-\big\langle 1, \big((Iv)w\big)I \big\rangle
-\big\langle 1, vw\big\rangle \ \ \,\quad \qquad \qquad \mbox{by Lemma \ref{unitary multipliers}}
\\
&=-\big\langle 1, [Iv, w, I]+ (Iv)(w I) \big\rangle
-\big\langle 1, vw\big\rangle
\\
&=-\big\langle 1, (Iv)(wI) \big\rangle
-\big\langle 1, vw\big\rangle \ \ \   \quad \qquad \qquad \mbox{by (\ref{Real part free})}
\\
&=-\big\langle 1, I(vw) I \big\rangle
-\big\langle 1, vw\big\rangle \ \ \, \qquad \qquad \qquad \mbox{by (\ref{Monfang})}
\\
&=\big\langle 1, vw\big\rangle
-\big\langle 1, vw\big\rangle \ \,\quad\qquad\qquad \qquad \qquad \mbox{by Lemma \ref{unitary multipliers}}
\\
&=0.
\end{split}
\end{equation*}
For each $u\in\mathbb O$. We write $u=x+yI$ with $x, y\in\mathbb R$ and $I\in\mathbb S$, then
\begin{equation*}
\begin{split}
\big\langle u,\, [u, v, w]\big\rangle
&= \big\langle u,\, [yI, v, w]\big\rangle
\\
&= y\big\langle x+yI, \, [yI, v, w]\big\rangle
\\
&= y^2\big\langle I,\, [I, v, w]\big\rangle
\\
&=0,
\end{split}
\end{equation*}
which completes the proof.
\end{proof}

Despite the triviality of  the above two lemmas, they turn out to be quite useful in our subsequent argument, especially in the proofs of Theorems \ref{BSL}, \ref{regular diam-LT} and Proposition \ref{R-product wrt I-product}.

\subsection{Octonionic slice regular functions}
In order to introduce the notion of slice regularity on octonions $\mathbb O$, we rewrite
each element $w\in \mathbb O$ as $w = x + yI$, where $x, y \in \mathbb R$ and
$$I=\dfrac{{\rm{Im}}\, (w)}{|{\rm{Im}}\, (w)|}$$
 if ${\rm{Im}}(w)\neq 0$, otherwise we take $I$ arbitrarily such that $I^2=-1$.
Then $I $ is an element of the unit 6-dimensional sphere of purely imaginary octonions
$$\mathbb S=\big\{w \in \mathbb O:w^2 =-1\big\}.$$
For any two elements $I, J\in\mathbb S$, we define the \textit{wedge product} of $I$ and $J$ as
$$I\wedge J:=\frac12[I, J]=\frac12(IJ-JI),$$
which satisfies that
\begin{equation}\label{relation-inner-wedge}
IJ=-\langle I,J\rangle+I\wedge J,
\end{equation}
in view of (\ref{inner product on O}).
For every $I \in \mathbb S $ we will denote by $\mathbb C_I$ the plane $ \mathbb R \oplus I\mathbb R $, isomorphic to $ \mathbb C$, and, if $\Omega \subseteq \mathbb O$, by $\Omega_I$ the intersection $ \Omega \cap \mathbb C_I $. Also, we will denote by $B(p, R)$ the Euclidean open  ball of radius $R$ centred at $p\in\mathbb O$, i.e.
$$B(p, R)=\big\{w \in \mathbb O:|w-p|<R\big\}.$$
For simplicity, we denote by $\mathbb B$ the ball $B(0, 1)$.

We can now recall the notion of slice regularity.

\begin{definition} \label{de: regular} Let $\Omega$ be a domain in $\mathbb O$. A function $f :\Omega \rightarrow \mathbb O$ is called (left) \emph{slice regular} if, for all $ I \in \mathbb S$, its restriction $f_I$ to $\Omega_I$ is \emph{holomorphic}, i.e., it has continuous partial derivatives and satisfies
$$\bar{\partial}_I f(x+yI):=\frac{1}{2}\left(\frac{\partial}{\partial x}+I\frac{\partial}{\partial y}\right)f_I (x+yI)=0$$
for all $x+yI\in \Omega_I $.
\end{definition}

A wide class of examples of slice regular functions is given by polynomials and  power series of the variable $w$ with octonionic coefficients on the right. Indeed, a function $f$ is slice regular on an  open   ball $B(0,R)$ if and only if $f$ admits a power series expansion
\begin{equation}\label{Taylor expansion on ball}
f(w)=\sum_{n=0}^{\infty}w^na_n,
\end{equation}
which converges absolutely and uniformly on every compact subset of $B(0,R)$ (see \cite{GS50}).
As shown in \cite{CGSS}, the natural   domains of definition  of quaternionic slice regular functions are the  so-called  symmetric slice domains, which play
for quaternionic slice regular functions the role played by domains of holomorphy for holomorphic functions of several complex variables. This is also the case for octonionic slice regular functions.

\begin{definition} \label{de: domain}
Let $\Omega$ be a domain in $\mathbb O $.

1. $\Omega$ is called a \textit{slice domain}  if it intersects the real axis and if  for every $I \in \mathbb S $, $\Omega_I$  is a domain in $ \mathbb C_I $.

2. $\Omega$ is called an \textit{axially symmetric domain} if for every point  $x + yI \in \Omega$, with  $x,y \in \mathbb R $ and $I\in \mathbb S$, the entire 6-dimensional sphere $x + y\mathbb S$ is contained in $\Omega $.
\end{definition}

A domain in $\mathbb O$ is called a \textit{symmetric slice domain} if it is not only a slice domain, but also an axially symmetric domain. By the very definition, an open ball $B(0,R)$ is  a typical  symmetric slice domain.  For slice regular functions a natural definition of slice derivative is given as follows:
\begin{definition} \label{de: derivative}
Let $f :\Omega \rightarrow \mathbb O$  be a slice regular function. For each $I\in\mathbb S$, the $I$- derivative of $f$ at $w=x+yI$
is defined by
$$\partial_I f(x+yI):=\frac{1}{2}\left(\frac{\partial}{\partial x}-I\frac{\partial}{\partial y}\right)f_I (x+yI)$$
on $\Omega_I$. The \textit{slice derivative} of $f$ is the function $f'$ defined by $\partial_I f$ on $\Omega_I$ for all $I\in\mathbb S$.
 \end{definition}

From the very definition and  Artin's theorem for alternative algebras (Theorem \ref{Artin-thm}) mentioned as before, the slice derivative of a slice regular function $f:\Omega \rightarrow \mathbb O$ is still slice regular so that we can iterate the differentiation to obtain the $n$-th
slice derivative
$$\partial^{n}_I f(w)=\frac{\partial^{n} f}{\partial x^{n}}(w),\quad\,\forall \,\, n\in \mathbb N,$$
where $w=x+yI\in \Omega$. In what follows, for the sake of simplicity, we will denote the $n$-th slice derivative by $f^{(n)}$ for every $n\in \mathbb N$.
Now it is easy to see that the coefficient $a_n$ appeared in (\ref{Taylor expansion on ball}) is exactly $f^{(n)}(0)/n!$. We will also omit the term `slice' when referring to slice regular functions.

Since  slice regularity does not keep under pointwise product of two regular functions, a new multiplication operation, called the regular product (or $\ast$-product), will be defined by means of a suitable modification of the usual one subject to the non-commutativity and the non-associativity of $\mathbb O$, based on the following splitting lemma (compare \cite[Lemma 2.7]{GS50}; also \cite[Lemma 2.4]{Ghiloni3}), which is more convenient for our subsequent arguments.

\begin{lemma}\label{eq:Splitting}
Let $f$ be a regular function on a  domain $\Omega\subseteq \mathbb O$. Then for any $I, J, K\in\mathbb S$ with  $I, J, IJ, K$ being mutually perpendicular with respect to the standard Euclidean inner product on $\mathbb O$, there exist four holomorphic functions $F_k: \Omega_I\rightarrow\mathbb C_I$, $k=1,2,3,4$ such that
\begin{equation}\label{splitting relation}
f_I(z)=F_1(z)+F_2(z)J+\big(F_3(z)+\overline{F_4(z)}J\big)K
\end{equation}
for all $z\in\Omega_I$.
\end{lemma}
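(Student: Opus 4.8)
The plan is to reduce the lemma to a componentwise analysis in a well-chosen orthonormal basis of $\mathbb O$ adapted to $I,J,K$, the main subtlety being to keep track of how the operator $\mathcal L_I$ of left multiplication by $I$ acts on the relevant two-planes in the presence of non-associativity.

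First I would set $\mathbb H_{I,J}$ to be the real subalgebra of $\mathbb O$ generated by $I$ and $J$. By Artin's theorem (Theorem \ref{Artin-thm}) it is associative, and since $I,J,IJ$ are orthonormal purely imaginary octonions it is isometrically isomorphic to $\mathbb H$, with orthonormal basis $\{1,I,J,IJ\}$. As $K$ is a unit purely imaginary octonion orthogonal to $\mathbb H_{I,J}$, the Cayley--Dickson rule $(\ref{Generation rule02})$ shows $\mathbb O=\mathbb H_{I,J}\oplus\mathbb H_{I,J}K$, and using that $\mathcal L_I$, $\mathcal R_K$ etc.\ are unitary (Lemma \ref{unitary multipliers}) together with the alternative laws one checks that $\{1,I,J,IJ,K,IK,JK,(IJ)K\}$ is an orthonormal basis of $\mathbb O\cong\mathbb R^8$; equivalently $\mathbb O=V_1\oplus V_2\oplus V_3\oplus V_4$ orthogonally, with $V_1=\mathbb C_I$, $V_2=\mathbb C_I J$, $V_3=\mathbb C_I K$, $V_4=(\mathbb C_I J)K$. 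Hence every $C^1$ map $f_I\colon\Omega_I\to\mathbb O$ has a unique decomposition $f_I=F_1+F_2J+F_3K+(\overline{F_4}J)K$ with $F_k\colon\Omega_I\to\mathbb C_I$ of class $C^1$, and it remains to show that slice regularity of $f$ forces $\bar\partial_I F_k=0$ for each $k$.

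The heart of the matter is that $\mathcal L_I$ preserves each $V_k$. For $V_1,V_2,V_3$ this follows from the identity $[\lambda,\mu,w]=0$, valid for all $\lambda,\mu\in\mathbb C_I$ and $w\in\mathbb O$ (immediate from the fact that the associator is alternating in its arguments), which gives $I(\lambda J)=(I\lambda)J$ and $I(\lambda K)=(I\lambda)K$ for $\lambda\in\mathbb C_I$. For $V_4$ the Cayley--Dickson rule yields $I(JK)=-(IJ)K$ and $I((IJ)K)=JK$, from which one computes that, writing $\Phi(G):=(\overline{G}J)K$ for $G\in\mathbb C_I$, one has $\mathcal L_I\bigl(\Phi(G)\bigr)=\Phi(IG)$; in particular $\mathcal L_I$ preserves $V_4=\Phi(\mathbb C_I)$. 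It is exactly the conjugation in the term $\overline{F_4}J$ that converts the sign $-1$ coming from $JI=-IJ$ into a $+1$ in this intertwining relation. Since $\partial_x,\partial_y$ act componentwise with respect to $\mathbb O=\bigoplus V_k$ and $\mathcal L_I$ stabilizes each $V_k$, the operator $\bar\partial_I=\tfrac12(\partial_x+\mathcal L_I\partial_y)$ also respects this splitting, and a direct computation gives the four components of $\bar\partial_I f_I$ as $\bar\partial_I F_1$, $(\bar\partial_I F_2)J$, $(\bar\partial_I F_3)K$ and $\Phi(\bar\partial_I F_4)$. As the maps $\eta\mapsto\eta J$, $\eta\mapsto\eta K$ and $\Phi$ are injective, the hypothesis $\bar\partial_I f_I=0$ forces $\bar\partial_I F_k=0$ for $k=1,2,3,4$, i.e.\ each $F_k$ is holomorphic, which is the assertion.

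I expect the main obstacle to be the non-associative block $V_4$: one must confirm that $\mathcal L_I$ genuinely maps $(\mathbb C_I J)K$ into itself and compute its action there correctly (this is where the Moufang identities $(\ref{Monfang})$, or equivalently the explicit Cayley--Dickson product, are needed), and one must notice that the conjugation on $F_4$ in the stated form of the splitting is precisely what makes ``$\bar\partial_I$ of the $V_4$-component vanishes'' equivalent to holomorphy, rather than antiholomorphy, of $F_4$. By contrast the orthonormality of the eight basis vectors, while essential for the unique decomposition of $f_I$, is routine.
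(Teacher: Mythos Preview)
Your proof is correct and follows essentially the same route as the paper. The paper's argument is more compressed: after noting that the Cayley--Dickson decomposition $\mathbb O=\mathbb H_{I,J}\oplus\mathbb H_{I,J}K$ yields the four $\mathbb C_I$-valued components, it applies the product rule $(\ref{Generation rule02})$ directly to compute $\bar\partial_I f$ in one line as $\bar\partial_I F_1+\bar\partial_I F_2\,J+\bigl(\bar\partial_I F_3+\partial_I\overline{F_4}\,J\bigr)K$, from which holomorphy of each $F_k$ is immediate. Your unpacking of this computation into the statement that $\mathcal L_I$ stabilizes each of the four $2$-planes $V_k$, and your explicit intertwining relation $\mathcal L_I\circ\Phi=\Phi\circ\mathcal L_I$ on $V_4$, amount to the same calculation viewed more structurally.
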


\begin{proof}
The well-known Cayley-Dickson process guarantees the existence of four functions $F_k: \Omega_I\rightarrow\mathbb C_I$, $k=1,2,3,4$ such that  equality (\ref{splitting relation}) holds for all $z\in\Omega_I$. Now it remains to verify the holomorphy of these four functions $F_k, k=1,2,3,4$. By (\ref{Generation rule02}), for every $z\in\Omega_I$,
\begin{equation}\label{holomorphy-verification}
\begin{split}
\bar{\partial}_I f(z)
&=\bar{\partial}_IF_1(z)+\bar{\partial}_IF_2(z)J
+\Big(\big(F_3(z)+\overline{F_4(z)}J \big)\bar{\partial}_{I }\Big)K
\\
&=\bar{\partial}_IF_1(z)+\bar{\partial}_IF_2(z)J
+\big(\bar{\partial}_IF_3(z)+\partial_I\overline{F_4(z)}J\big)K.
\end{split}
\end{equation}
Thus $\bar{\partial}_I f(z)=0$ implies $\bar{\partial}_IF_k(z)=0$, proving the holomorphy of these four functions $F_k, k=1,2,3,4$. The proof is complete.
\end{proof}
%On open balls centred at the origin, the $\ast$-product of two regular functions is defined by means of their power series expansions.

Let $\Omega\subseteq \mathbb O$ be a symmetric slice domain and $I, J,  K\in\mathbb S$ be such that  $I, J, IJ, K$ are mutually perpendicular with respect to the standard Euclidean inner product on $\mathbb O$. Let $f$ and $g$ be two regular functions on   $\Omega\subseteq \mathbb O$. Then the splitting lemma  above guarantees the existence of eight holomorphic functions $F_k, G_k: \Omega_I\rightarrow \mathbb C_I$, $k=1,2,3,4$ such that for all $z\in\Omega_I$,
\begin{equation*}\label{splitting of f}
f_I(z)=F_1(z)+F_2(z)J+\big(F_3(z)+\overline{F_4(z)}J\big)K
\end{equation*}
and
\begin{equation*}\label{splitting of g}
g_I(z)=G_1(z)+G_2(z)J+\big(G_3(z)+\overline{G_4(z)}J\big)K
\end{equation*}
Following the approach in \cite{CGSS}, we define the function $f_I\ast g_I:\Omega_I\rightarrow\mathbb O$ as
\begin{equation}\label{def-regular product}
f_I\ast g_I(z):=H_1(z)+H_2(z)J+\big(H_3(z)+\overline{H_4(z)}J\big)K,
\end{equation}
where
$$H_1(z)=F_1(z)G_1(z)-F_2(z)\overline{G_2(\overline{z})}
-F_3(z)\overline{G_3(\overline{z})}-F_4(z)\overline{G_4(\bar{z})}, $$
$$H_2(z)=F_1(z)G_2(z)+F_2(z)\overline{G_1(\bar{z})}
+\overline{F_3(\overline{z})}\,\overline{G_4(\overline{z})}
-\overline{F_4(\overline{z})}\,\overline{G_3(\overline{z})}, $$
$$H_3(z)=F_1(z)G_3(z)-\overline{F_2(\overline{z})}\,\overline{G_4(\overline{z})}
+F_3(z)\overline{G_1(\overline{z})}+
\overline{F_4(\overline{z})}\,\overline{G_2(\overline{z})}, $$
$$H_4(z)=F_1(z)G_4(z)+\overline{F_2(\bar{z})}\,\overline{G_3(\overline{z})}
-\overline{F_3(\bar{z})}\,\overline{G_2(\overline{z})}+F_4(z)\overline{G_1(\overline{z})}. $$
Then $f_I\ast g_I(z)$ is obviously holomorphic satisfying $f_I\ast g_I(z)=f(z)g(z)$ (independent of the choice of $I\in\mathbb S$) for all $z\in\Omega\cap\mathbb R$. Therefore, $f_I\ast g_I$ admits a \textit{unique} regular extension to $\Omega$, independent of the choice of $I\in\mathbb S$,  via the formula (2) in \cite[Proposition 6]{Ghiloni1}, analogous to  \cite[Lemma 4.4]{CGSS}. We denote  by ${\rm{ext}}(f_I\ast g_I)$ this unique regular extension of $f_I\ast g_I$.

\begin{definition}\label{R-product}
Let $f$ and $g$ be two regular functions on  a symmetric slice domain $\Omega\subseteq \mathbb O$. Then the regular function
$$f\ast g(w):={\rm{ext}}(f_I\ast g_I)(w)$$
defined as the extension of (\ref{def-regular product}) is called the \textit{regular product} (or \textit{$\ast$-product}) of $f$ and $g$.
\end{definition}

\begin{remark}\label{remark on R-product01}
In the special case that $\Omega=B(0,R)$, there is a more direct way of defining the regular product.
Let $f$, $g:B(0,R)\rightarrow \mathbb O$ be two regular functions and let
\begin{equation*}\label{def of regular product}
f(w)=\sum\limits_{n=0}^{\infty}w^na_n,\qquad g(w)=\sum\limits_{n=0}^{\infty}w^nb_n
\end{equation*}
be their power series expansions. The regular product  of $f$ and $g$ given in Definition \ref{R-product} is coherent with  the one given by
$$f\ast g(w):=\sum\limits_{n=0}^{\infty}w^n\bigg(\sum\limits_{k=0}^n a_kb_{n-k}\bigg).$$
This follows from the identity principle and the fact that these two products defined in  these two ways are exactly the usual pointwise product  when they are restricted to $B(0,R)\cap\mathbb R$, as one can patiently verify.
\end{remark}

\begin{remark}\label{remark on R-product02}
When $\Omega\subseteq \mathbb O$ is a symmetric slice domain, the same reason as in the preceding remark also shows that the product defined in \cite[Definition 9]{Ghiloni1} coincides with the one in Definition \ref{R-product} provided all the considered functions are  regular.
\end{remark}

\begin{remark}\label{remark on R-product03}
Notice that the regular product is obviously distributive, but in general non-commutative and non-associative, since the underlying algebra $\mathbb O$ is  non-commutative and non-associative. However, it is commutative and associative in some special cases. For instance, let $f$ and $g$ be two regular functions on  a symmetric slice domain $\Omega\subseteq \mathbb O$ and satisfy the additional condition that $f(\Omega_I)\subseteq\mathbb C_I$ and $g(\Omega_I)\subseteq\mathbb C_I$ for some $I\in\mathbb S$, then from Artin's theorem for alternative algebras (Theorem \ref{Artin-thm}), Remark \ref{remark on R-product01} and the identity principle it follows that
$$f\ast g=g\ast f$$
and
$$(f\ast g)\ast h=f\ast(g\ast h)$$
for every regular function $h$ on $\Omega$. Moreover, if $f$ is further slice preserving (i.e. $f(\Omega_I)\subseteq \mathbb C_I$ for every $I\in\mathbb S$), then
$$f\ast h=fh=h\ast f.$$
\end{remark}

Again let $\Omega\subseteq \mathbb O$ be a symmetric slice domain and $I, J, K\in\mathbb S$ be such that  $I, J, IJ, K$ are mutually perpendicular with respect to the standard Euclidean inner product on $\mathbb O$. Let $f$ be a regular function on   $\Omega\subseteq \mathbb O$. Then the splitting lemma (Lemma \ref{eq:Splitting}) guarantees the existence of four holomorphic functions $F_k: \Omega_I\rightarrow \mathbb C_I$, $k=1,2,3,4$ such that for all $z\in\Omega_I$,
\begin{equation*}\label{splitting of f}
f_I(z)=F_1(z)+F_2(z)J+\big(F_3(z)+\overline{F_4(z)}J\big)K.
\end{equation*}
We define two functions $f^c_I, f^s_I:\Omega_I\rightarrow \mathbb O$ as
\begin{equation}\label{con-def}
 f^c_I(z):=\overline{F_1(\bar{z})}-F_2(z)J-\big(F_3(z)+\overline{F_4(z)}J\big)K,
\end{equation}
and
\begin{equation}\label{sym-def}
f^s_I(z):= f_I\ast f^c_I(z)
=\sum_{k=1}^{4}F_k(z)\overline{F_k(\bar{z})}= f^c_I\ast f_I(z).
\end{equation}
Here the second equality in (\ref{sym-def}) follows from (\ref{def-regular product}).
Then both $f^c_I(z)$ and $f^s_I(z)$ are obviously holomorphic satisfying $f^c_I(z)=\overline{f(z)}$ and $f^s_I(z)=|f(z)|^2$ (independent of the choice of $I\in\mathbb S$) for all $z\in\Omega\cap\mathbb R$. Therefore, they  admit respectively a \textit{unique} regular extension to $\Omega$, independent of the choice of $I\in\mathbb S$. We denote them  by ${\rm{ext}}(f^c_I)$ and ${\rm{ext}}(f^s_I)$, respectively.

\begin{definition}\label{con-sym-def}
Let $f$  be a regular function on  a symmetric slice domain $\Omega\subseteq \mathbb O$. Then the regular function
$$f^c(w):={\rm{ext}}(f^c_I)(w)$$
defined as the extension of (\ref{con-def}) is called the \textit{regular conjugate} of $f$, and the regular function
$$f^s(w):={\rm{ext}}(f^s_I)(w)=f\ast f^c(w)=f^c\ast f(w)$$
is called the \textit{symmetrization} of $f$.
\end{definition}

\begin{remark}\label{remark on con-sym01}
As before, one can also prove that for octonionic regular functions on symmetric slice domains in $\Omega\subseteq \mathbb O$, the notions given in Definition \ref{con-sym-def} for regular conjugate and symmetrization coincide essentially with those introduced in \cite[Definition 11]{Ghiloni1}. In the special case that $\Omega=B(0,R)$, there is also an equivalent way of defining the regular conjugate and the symmetrization of regular functions, which goes as follows.
Let $f:B(0,R)\rightarrow \mathbb O$ be a regular function with the  power series expansion
\begin{equation*}\label{equi-def}
f(w)=\sum\limits_{n=0}^{\infty}w^na_n.
\end{equation*}
Then the regular conjugate and the symmetrization of $f$ are respectively given by
$$f^c(w)=\sum\limits_{n=0}^{\infty}w^n\overline{a}_n,$$
and
$$f^s(w)=f\ast f^c(w)=f^c\ast f(w)=\sum\limits_{n=0}^{\infty}w^n\bigg(\sum\limits_{k=0}^n a_k\overline{a}_{n-k}\bigg).$$
One can easily verify that these two definitions are the same as those in Definition \ref{con-sym-def}.
\end{remark}

\begin{remark}\label{remark on con-sym02}
From (\ref{sym-def}) one immediately deduces that the  symmetrization $f^s$ of every regular function $f$ on a symmetric slice domain  $\Omega\subseteq \mathbb O$ is slice preserving, i.e. $f^s(\Omega_I)\subseteq \mathbb C_I$ for every $I\in\mathbb S$.
\end{remark}

Both the regular conjugate and  the symmetrization are well-behaved with respect to the  regular product.

\begin{proposition}\label{beh-con-sym}
Let $f$ and $g$ be two regular functions on  a symmetric slice domain $\Omega\subseteq \mathbb O$. Then $(f^c)^c=f$, $(f\ast g)^c=g^c\ast f^c$ and
\begin{equation}\label{beh-symmetrization}
(f\ast g)^s=f^sg^s=g^sf^s.
\end{equation}
\end{proposition}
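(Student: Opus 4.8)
The plan is to reduce all three identities to pointwise computations on the real axis and then invoke the identity principle for octonionic slice regular functions. Observe first that every function occurring in the statement --- namely $(f^c)^c$ and $f$; $(f\ast g)^c$ and $g^c\ast f^c$; $(f\ast g)^s$, $f^sg^s$ and $g^sf^s$ --- is slice regular on the symmetric slice domain $\Omega$. Indeed, the regular conjugate of a regular function and the regular product of two regular functions are regular by Definitions \ref{R-product} and \ref{con-sym-def}; moreover $f^s$ and $g^s$ are slice preserving by Remark \ref{remark on con-sym02}, so that by Remark \ref{remark on R-product03} the juxtaposition products $f^sg^s$ and $g^sf^s$ coincide with the regular products $f^s\ast g^s$ and $g^s\ast f^s$, which are again regular (and in fact equal, both factors being slice preserving). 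Since $\Omega$ is a slice domain, $\Omega\cap\mathbb R$ is a nonempty open subset of $\mathbb R$; it is contained in $\Omega_I$ for every $I\in\mathbb S$ and has accumulation points there, so by the identity principle it suffices to prove each equality pointwise on $\Omega\cap\mathbb R$, where the difference of the two sides (a slice regular function) must then vanish identically on all of $\Omega$.

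On $\Omega\cap\mathbb R$ the three constructions collapse to the obvious ones: as recorded just before Definition \ref{R-product} and before Definition \ref{con-sym-def}, for $x\in\Omega\cap\mathbb R$ one has $(f\ast g)(x)=f(x)g(x)$, $f^c(x)=\overline{f(x)}$ and $f^s(x)=|f(x)|^2$. Hence $(f^c)^c(x)=\overline{\overline{f(x)}}=f(x)$; using the relation $\overline{ab}=\overline{b}\,\overline{a}$ valid for the octonionic conjugation, $(f\ast g)^c(x)=\overline{f(x)g(x)}=\overline{g(x)}\,\overline{f(x)}=g^c(x)f^c(x)=(g^c\ast f^c)(x)$; and using the multiplicativity of the octonionic norm (a consequence of Artin's theorem, Theorem \ref{Artin-thm}), $(f\ast g)^s(x)=|f(x)g(x)|^2=|f(x)|^2|g(x)|^2=f^s(x)g^s(x)$, which equals $g^s(x)f^s(x)$ since these are real numbers. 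Applying the identity principle gives $(f^c)^c=f$, $(f\ast g)^c=g^c\ast f^c$ and $(f\ast g)^s=f^sg^s=g^sf^s$ on all of $\Omega$.

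There is essentially no hard step here: once one is careful that the juxtaposition products in $(\ref{beh-symmetrization})$ are legitimate slice regular functions (which is where Remarks \ref{remark on con-sym02} and \ref{remark on R-product03} enter), the proof is a routine real-axis computation followed by the identity principle. An alternative, computation-heavy route would verify the three identities directly from the explicit component formulas for $f_I\ast g_I$ in $(\ref{def-regular product})$ and for $f^c_I$, $f^s_I$ in $(\ref{con-def})$--$(\ref{sym-def})$ via the splitting lemma, but this requires tracking the four holomorphic components $F_k,G_k$ through several products, and the real-axis reduction avoids it entirely. The only point deserving care in the non-associative setting is the equality $f^s\ast g^s=g^s\ast f^s$, which we obtain not from any general commutativity of $\ast$ but from the fact that both $f^s$ and $g^s$ are slice preserving.
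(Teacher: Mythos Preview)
Your proof is correct. The paper's own argument follows the same identity-principle strategy but implements it differently: it declares $(f^c)^c=f$ and $(f\ast g)^c=g^c\ast f^c$ ``obvious'' directly from the explicit splitting formulas $(\ref{def-regular product})$ and $(\ref{con-def})$, and for $(\ref{beh-symmetrization})$ it cites the power-series case proved in \cite[Lemma~2]{Ghiloni2} and then extends to general symmetric slice domains by the identity principle. Your real-axis reduction is more uniform and more self-contained: it treats all three identities by the same device, replacing both the component-by-component splitting computation and the external citation by the elementary octonionic facts $\overline{ab}=\bar b\,\bar a$ and $|ab|=|a|\,|b|$. The only extra care your route requires --- and you handle it --- is checking via Remarks~\ref{remark on con-sym02} and~\ref{remark on R-product03} that the juxtaposed products $f^sg^s$ and $g^sf^s$ are themselves slice regular, so that the identity principle legitimately applies to them.
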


\begin{proof}
We only prove (\ref{beh-symmetrization}), since the remaining is obvious in virtue of (\ref{def-regular product}) and (\ref{con-def}). The power series case of (\ref{beh-symmetrization}) was proved in \cite[Lemma 2]{Ghiloni2}, and the general case follows immediately from the former case and the identity principle.
\end{proof}

Now we can use the notions of regular conjugate and symmetrization introduced above to define the regular reciprocal of a regular function:

\begin{definition}\label{def-regular reciprocal}
Let $f$  be a non-identically vanishing  regular function on  a symmetric slice domain $\Omega\subseteq \mathbb O$ and $\mathcal{Z}_{f^s}$   the set of zeros of its symmetrization $f^s$. We define the \textit{regular reciprocal} of $f$ as the regular function $f^{-\ast}:\Omega\setminus \mathcal{Z}_{f^s}\rightarrow \mathbb O$ given by
\begin{equation}\label{exp-regular reciprocal}
f^{-\ast}(w):=f^s(w)^{-1}f^c(w).
\end{equation}
\end{definition}

The function $f^{-\ast}$ defined in (\ref{exp-regular reciprocal}) deserves the name of regular reciprocal of $f$ due to the following:

\begin{proposition}
Let $f$  be a  non-identically vanishing regular function on  a symmetric slice domain $\Omega\subseteq \mathbb O$ and $\mathcal{Z}_{f^s}$   the set of zeros of its symmetrization $f^s$. Then
$$f^{-\ast}\ast f=f\ast f^{-\ast}=1$$
on $\Omega\setminus \mathcal{Z}_{f^s}$.
\end{proposition}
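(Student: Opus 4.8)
The plan is to reduce the claim to the single structural fact that the symmetrization $f^s$ — and hence its pointwise reciprocal — is slice preserving, which by Remark~\ref{remark on R-product03} makes it behave, inside any $\ast$-product, exactly like an ordinary scalar factor; combined with the identity principle this restores just enough associativity to carry out the formal cancellation $f^{-\ast}\ast f=(f^s)^{-1}\ast(f^c\ast f)=(f^s)^{-1}\ast f^s=1$.

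First I would record the basic properties of $(f^s)^{-1}$. Since $f$ is not identically zero, neither is $f^s$, and $\mathcal{Z}_{f^s}$ is a union of isolated real points and isolated $6$-spheres; hence $\Omega\setminus\mathcal{Z}_{f^s}$ is again a symmetric slice domain, on which $f^c$, $f^s$ and the $\ast$-product are all defined. By Remark~\ref{remark on con-sym02} the function $f^s$ is slice preserving, so it is nowhere vanishing on $\Omega\setminus\mathcal{Z}_{f^s}$ and its restriction to each slice $\mathbb C_I$ is a nowhere-zero $\mathbb C_I$-valued holomorphic function; therefore its pointwise reciprocal $(f^s)^{-1}$ is again a regular, slice-preserving function on $\Omega\setminus\mathcal{Z}_{f^s}$. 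In particular, by the last part of Remark~\ref{remark on R-product03}, $(f^s)^{-1}\ast g=(f^s)^{-1}g=g\ast(f^s)^{-1}$ for every regular $g$ on $\Omega\setminus\mathcal{Z}_{f^s}$; applied to $g=f^c$ this gives $f^{-\ast}=(f^s)^{-1}f^c=(f^s)^{-1}\ast f^c=f^c\ast(f^s)^{-1}$.

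Next I would establish the restricted associativity that is needed: if $p$ is a regular slice-preserving function and $g,h$ are regular functions on $\Omega\setminus\mathcal{Z}_{f^s}$, then $(p\ast g)\ast h=p\ast(g\ast h)$ and $(g\ast h)\ast p=g\ast(h\ast p)$. Each side of either identity is a regular function, and on $(\Omega\setminus\mathcal{Z}_{f^s})\cap\mathbb R$ every $\ast$-product reduces to the ordinary pointwise product of octonions lying on a common real line, where multiplication is associative; hence the two sides agree on $(\Omega\setminus\mathcal{Z}_{f^s})\cap\mathbb R$, and the identity principle forces equality everywhere. (Equivalently, when $\Omega=B(0,R)$ one can verify this directly on power series and then extend by the identity principle.)

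With these preliminaries the computation is immediate. Using $f^{-\ast}=(f^s)^{-1}\ast f^c$, the restricted associativity with $p=(f^s)^{-1}$, and $f^s=f^c\ast f$ (Definition~\ref{con-sym-def}),
\[
f^{-\ast}\ast f=\big((f^s)^{-1}\ast f^c\big)\ast f=(f^s)^{-1}\ast\big(f^c\ast f\big)=(f^s)^{-1}\ast f^s=(f^s)^{-1}f^s=1 .
\]
Symmetrically, using $f^{-\ast}=f^c\ast(f^s)^{-1}$, the restricted associativity with $p=(f^s)^{-1}$ now appearing on the right, and $f^s=f\ast f^c$,
\[
f\ast f^{-\ast}=f\ast\big(f^c\ast(f^s)^{-1}\big)=\big(f\ast f^c\big)\ast(f^s)^{-1}=f^s\ast(f^s)^{-1}=f^s(f^s)^{-1}=1 .
\]
The only genuine obstacle is the non-associativity of the $\ast$-product, and it is circumvented precisely because in each of the triple products above one of the three factors is the slice-preserving function $(f^s)^{-1}$ — exactly the situation governed by Remark~\ref{remark on R-product03} and the identity principle.
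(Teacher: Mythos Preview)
Your argument is correct in outline and matches the approach the paper itself uses (the paper states this proposition without proof, but the very next proposition is proved by exactly the same mechanism: Remarks~\ref{remark on R-product03} and~\ref{remark on con-sym02} together with the identity principle). The reduction to the slice-preserving factor $(f^s)^{-1}$ and the two displayed computations are exactly right.

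There is one sentence you should fix. In justifying the restricted associativity $(p\ast g)\ast h=p\ast(g\ast h)$ at real points you write that the $\ast$-product reduces to ``the ordinary pointwise product of octonions lying on a common real line, where multiplication is associative.'' This is not the correct reason: for $x\in\mathbb R$ the values $g(x),h(x)$ are arbitrary octonions, and three general octonions do \emph{not} associate. What makes the identity true at real $x$ is that $p$ is slice preserving, so $p(x)\in\mathbb R$, and a real scalar associates with any two octonions: $(p(x)g(x))h(x)=p(x)(g(x)h(x))$. With that one-line correction your identity-principle argument goes through, and the rest of the proof stands as written.
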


We conclude this section with the following simple proposition.
\begin{proposition}
Let $f$ and $g$  be two non-identically vanishing regular function on  a symmetric slice domain $\Omega\subseteq \mathbb O$. Then
$$(f\ast g)^{-\ast}=g^{-\ast}\ast f^{-\ast}$$
on $\Omega\setminus (\mathcal{Z}_{f^s}\cup \mathcal{Z}_{g^s})$.
\end{proposition}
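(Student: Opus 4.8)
The plan is to derive the identity from the explicit formula $h^{-\ast}=(h^s)^{-1}h^c$ for the regular reciprocal, together with the behaviour of the regular conjugate and the symmetrization under the $\ast$-product recorded in Proposition~\ref{beh-con-sym}, plus one small ``partial associativity'' observation for slice-preserving factors.

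First I would establish the following lemma: if $p$ is a slice-preserving regular function on a symmetric slice domain $\Omega$ and $h,k$ are any regular functions on $\Omega$, then $(ph)\ast k=p\,(h\ast k)=h\ast(pk)$. This is proved by restricting everything to $\Omega\cap\mathbb R$, where $p$ takes real values, so that $\ast$ reduces to the pointwise octonionic product and the relevant associators $[p(x),h(x),k(x)]$ vanish (the associator being trilinear and zero as soon as one entry is real, since $[1,\cdot,\cdot]=0$); the identity on all of $\Omega$ then follows from the identity principle on the slice domain $\Omega$. Combined with the last line of Remark~\ref{remark on R-product03}, this says that a slice-preserving function is ``central'' for, and associates with, the $\ast$-product. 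I would also record that $(f^s)^{-1}$ and $(g^s)^{-1}$ are again slice-preserving and regular where defined: since $f^s$ is slice preserving (Remark~\ref{remark on con-sym02}) one has $(f^s)^c=f^s$ and $(f^s)^s=(f^s)^2$, whence the regular reciprocal $(f^s)^{-\ast}=\big((f^s)^2\big)^{-1}f^s=(f^s)^{-1}$ coincides with the pointwise reciprocal.

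With these in hand the computation is short. By Definition~\ref{def-regular reciprocal}, Proposition~\ref{beh-con-sym} and $(\ref{beh-symmetrization})$, we get $(f\ast g)^{-\ast}=\big((f\ast g)^s\big)^{-1}(f\ast g)^c=(f^sg^s)^{-1}(g^c\ast f^c)$; in particular $\mathcal Z_{(f\ast g)^s}=\mathcal Z_{f^s}\cup\mathcal Z_{g^s}$, so both sides of the asserted identity are regular on the same set $\Omega\setminus(\mathcal Z_{f^s}\cup\mathcal Z_{g^s})$. On the other hand, applying the lemma twice to pull the slice-preserving factors $(g^s)^{-1}$ and $(f^s)^{-1}$ out of the $\ast$-product, $g^{-\ast}\ast f^{-\ast}=\big((g^s)^{-1}g^c\big)\ast\big((f^s)^{-1}f^c\big)=(g^s)^{-1}\big(g^c\ast((f^s)^{-1}f^c)\big)=(g^s)^{-1}\big((f^s)^{-1}(g^c\ast f^c)\big)=(f^sg^s)^{-1}(g^c\ast f^c)$, where the last step uses that the values of $(f^s)^{-1}$ and $(g^s)^{-1}$ lie in a commutative and associative plane $\mathbb C_I$ (again checked on $\Omega\cap\mathbb R$ and extended by the identity principle). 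Comparing the two expressions proves $(f\ast g)^{-\ast}=g^{-\ast}\ast f^{-\ast}$.

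The only real obstacle is non-associativity: the slick argument $(g^{-\ast}\ast f^{-\ast})\ast(f\ast g)=g^{-\ast}\ast\big((f^{-\ast}\ast f)\ast g\big)=g^{-\ast}\ast g=1$ is simply not available, since neither $\mathbb O$ nor the $\ast$-product is associative. What rescues the classical formula is precisely that the symmetrizations $f^s,g^s$ are slice preserving, and slice-preserving functions do associate with the $\ast$-product — that is the content of the lemma above, and it is the only place the special structure is used. A minor technical point worth a remark is that $\Omega\setminus(\mathcal Z_{f^s}\cup\mathcal Z_{g^s})$ need not itself be a slice domain, so the identity-principle steps should be carried out on $\Omega$ (or slice by slice), exactly as in the quaternionic references, rather than on the punctured domain directly.
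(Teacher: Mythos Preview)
Your proof is correct and follows essentially the same route as the paper: both compute $(f\ast g)^{-\ast}=(f^sg^s)^{-1}(g^c\ast f^c)$ via Proposition~\ref{beh-con-sym} and then identify this with $g^{-\ast}\ast f^{-\ast}$ by exploiting that the slice-preserving factors $(f^s)^{-1},(g^s)^{-1}$ are central and associate with the $\ast$-product, which the paper invokes through Remarks~\ref{remark on R-product03} and~\ref{remark on con-sym02} while you spell out the needed partial-associativity lemma explicitly. Your added remarks on why the naive associative argument fails and on the domain of validity are helpful elaborations but do not change the underlying strategy.
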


\begin{proof}
The result follows from Remarks \ref{remark on R-product03} and \ref{remark on con-sym02}, together with Proposition \ref{beh-con-sym}:
$$(f\ast g)^{-\ast}=(f^sg^s)^{-1}(g^c\ast f^c)
=\big((g^s)^{-1}g^c\big)\ast\big((f^s)^{-1}f^c\big)=g^{-\ast}\ast f^{-\ast}.$$
\end{proof}

\section{Proof of Theorem \ref{BSL}} \label{Proof of Theorem BSL}
In this section, we shall give a proof of Theorem \ref{BSL}. Before presenting the details, we need some auxiliary results.

Let $\Omega\subseteq\mathbb O$ be a symmetric slice domain. For each regular function $f:\Omega\rightarrow \mathbb O$ and each $\xi\in\Omega$, an argument similar to the one in the proof of \cite[Proposition 3.17]{GSS}, together with the splitting lemma (Lemma \ref{eq:Splitting}) and Artin's theorem for alternative algebras (Theorem \ref{Artin-thm}), guarantees  the existence of a unique regular function on $\Omega$, denoted by $R_{\xi}f$, such that
\begin{equation}\label{de:Rf1}
f(w)-f(\xi)=(w-\xi)\ast R_{\xi}f(w), \qquad \forall \, w\in\Omega.
\end{equation}
Applying the same procedure to $R_{\xi}f$ at the point $\overline{\xi}$ yields
\begin{equation}\label{de:Rf2}
\begin{split}
f(w)=&f(\xi)+(w-\xi)\ast \Big(R_{\xi}f(\bar{\xi})+(w-\bar{\xi}\,)\ast R_{\overline{\xi}}R_{\xi}f(w)\Big) \\
=&f(\xi)+(w-\xi)R_{\xi}f(\bar{\xi})+\Delta_{\xi}(w)R_{\overline{\xi}}R_{\xi}f(w), \qquad \forall \, w\in\Omega,
\end{split}
\end{equation}
where $\Delta_{\xi}(w):=(w-\xi)\ast(w-\bar{\xi}\,)=w^2-2w{\rm{Re}}(\xi)+|\xi|^2$ is called the \textit{characteristic polynomial} of $\xi$ or the \textit{symmetrization} of $w-\xi$, and the second equality in (\ref{de:Rf2}) follows from Remark \ref{remark on R-product03}.

From the very definition and (\ref{de:Rf1}), one can see that $R_{\xi}f(\bar{\xi})$ is exactly the \textit{sphere derivative} $\partial_sf(\xi)$  of $f$ at the point $\xi$:
$$\partial_sf(\xi):=\big(2{\rm{Im}}(\xi)\big)^{-1}\big(f(\xi)-f(\overline{\xi})\big)=R_{\xi}f(\bar{\xi}).$$
In addition, for every $v\in\partial \mathbb B$ and every $t\in\mathbb R$ small enough, replacing $w$ by $\xi+tv$ in (\ref{de:Rf2}) yields
$$f(\xi+tv)-f(\xi)=tv\partial_sf(\xi)+t\big(tv^2+(\xi v-v\bar{\xi}\,)\big)R_{\overline{\xi}}R_{\xi}f(\xi+tv),$$
from which the following lemma immediately follows.
\begin{lemma}\label{D-derivative on O}
Let $f$ be a regular function on a  symmetric slice domain $\Omega\subseteq \mathbb O$ and $\xi\in\Omega$. Then for every $v\in\partial \mathbb B$, the directional derivative of $f$ along $v$ at $\xi$ is given by
\begin{equation}\label{eq:D-derivative}
\frac{\partial f}{\partial v}(\xi):=\lim\limits_{\mathbb R\ni t\rightarrow 0}\frac{f(\xi+tv)-f(\xi)}{t}=v\partial_sf(\xi)+(\xi v-v\overline{\xi}\,)R_{\overline{\xi}}R_{\xi}f(\xi).
\end{equation}
In particular, it holds that
\begin{equation}\label{derivative-relation}
f'(\xi)=\partial_sf(\xi)+2{\rm{Im}}(\xi)R_{\overline{\xi}}R_{\xi}f(\xi).
\end{equation}
\end{lemma}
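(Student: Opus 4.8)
The plan is to read the lemma off from the decomposition (\ref{de:Rf2}) by evaluating that identity along the segment $t\mapsto\xi+tv$ and forming a difference quotient. The first thing to observe is that in the second line of (\ref{de:Rf2}) the two remaining products are genuine pointwise products, so the substitution $w=\xi+tv$ may be carried out termwise: $(w-\xi)\,R_\xi f(\bar\xi)$ is, by left distributivity, nothing but the regular function $w\mapsto w\,R_\xi f(\bar\xi)-\xi\,R_\xi f(\bar\xi)$, while $\Delta_\xi(w)=w^2-2\,\mathrm{Re}(\xi)\,w+|\xi|^2$ has real coefficients and is therefore slice preserving, so by Remark \ref{remark on R-product03} one has $\Delta_\xi\ast R_{\bar\xi}R_\xi f=\Delta_\xi\cdot R_{\bar\xi}R_\xi f$ (equivalently, both sides of (\ref{de:Rf2}) restrict to the ordinary pointwise product on $\Omega\cap\mathbb R$, and the identity principle does the rest). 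Since $R_\xi f(\bar\xi)=\partial_s f(\xi)$, this yields, for $v\in\partial\mathbb B$ and real $t$ small enough that $\xi+tv\in\Omega$,
$$f(\xi+tv)-f(\xi)=tv\,\partial_s f(\xi)+\Delta_\xi(\xi+tv)\,R_{\bar\xi}R_\xi f(\xi+tv).$$

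Next I would expand the coefficient. Because $\xi$ is a root of its own characteristic polynomial, $\Delta_\xi(\xi)=0$; hence, using only left and right distributivity together with $2\,\mathrm{Re}(\xi)\,v=v(\xi+\bar\xi)$,
$$\Delta_\xi(\xi+tv)=t\big(\xi v+v\xi-2\,\mathrm{Re}(\xi)\,v\big)+t^2v^2=t\big(tv^2+\xi v-v\bar\xi\big).$$
Dividing the previous identity through by $t$ and letting $t\to0$ along $\mathbb R$ — permissible because $R_{\bar\xi}R_\xi f$ is regular, hence continuous at $\xi$, and the $t^2v^2$ contribution then vanishes — yields precisely (\ref{eq:D-derivative}).

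Finally, for (\ref{derivative-relation}) I would specialize $v$. When $\xi\notin\mathbb R$, write $\xi=x+yI$ with $y=|\mathrm{Im}(\xi)|>0$ and $I=\mathrm{Im}(\xi)/|\mathrm{Im}(\xi)|$, and take $v=I$ in (\ref{eq:D-derivative}): a short computation gives $\xi I-I\bar\xi=-2y$, while the left-hand side $\partial f/\partial I(\xi)=\partial_y f_I(\xi)$ equals $I f'(\xi)$ in view of the holomorphy relation $\bar{\partial}_I f=0$ and Artin's theorem (Theorem \ref{Artin-thm}); multiplying the resulting identity on the left by $-I$ and using $\mathrm{Im}(\xi)=yI$ then gives (\ref{derivative-relation}), the case $\xi\in\mathbb R$ being trivial since there $\mathrm{Im}(\xi)=0$ and, by (\ref{de:Rf1}), $\partial_s f(\xi)=f'(\xi)$. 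Alternatively, one may simply apply the slice derivative to the pointwise identity in the second line of (\ref{de:Rf2}): as $\Delta_\xi$ is slice preserving the Leibniz rule applies, and evaluating at $\xi$, where $\Delta_\xi(\xi)=0$ and $\Delta_\xi'(\xi)=2\,\mathrm{Im}(\xi)$, gives the same conclusion. The whole argument is essentially routine; the only point that genuinely requires the remark made above is the legitimacy of the termwise substitution $w=\xi+tv$ in (\ref{de:Rf2}) — that is, the reduction of its $\ast$-products to honest pointwise products near $\xi$ — since in the non-associative octonionic setting the value of a $\ast$-product at a point is not, in general, the product of the values.
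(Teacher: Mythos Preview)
Your proof is correct and follows essentially the same route as the paper: substitute $w=\xi+tv$ into the pointwise identity (\ref{de:Rf2}), expand $\Delta_\xi(\xi+tv)$, and pass to the limit. The paper records this in one displayed line immediately before the lemma and declares that the lemma ``immediately follows''; you have simply supplied the details the paper suppresses, including the justification via Remark \ref{remark on R-product03} that the $\ast$-products in (\ref{de:Rf2}) are honest pointwise products. One small remark: for (\ref{derivative-relation}) the most direct specialization is $v=1$, since then the left side of (\ref{eq:D-derivative}) is $\partial_x f(\xi)=f'(\xi)$ and $\xi\cdot 1-1\cdot\bar\xi=2\,\mathrm{Im}(\xi)$; your argument via $v=I$ (or via differentiating (\ref{de:Rf2})) is correct but slightly longer.
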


Also, the following lemma is needed in the proof of Theorem \ref{BSL}.

\begin{lemma}\label{lem: moudulus inequality}
Let $f$ be a   regular self-mapping of the open unit ball $\mathbb B$. Then the inequality
\begin{equation}\label{moudulus inequality}
\frac{1-|f(w)|^2}{1-|w|^2}\geq\frac{\big|1-\big\langle f(0), f(w)\big\rangle\big|^2}{1-|f(0)|^2}
\end{equation}
holds for all $w\in\mathbb B$.
\end{lemma}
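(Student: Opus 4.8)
The plan is to reduce \eqref{moudulus inequality} to the classical Schwarz--Pick inequality for holomorphic maps of the unit disc into the unit ball of a complex Euclidean space, by means of the splitting lemma. Fix $w\in\mathbb B$ and write $w=x+yI$ with $I\in\mathbb S$ and $x,y\in\mathbb R$; choose $J,K\in\mathbb S$ so that $I,J,IJ,K$ are mutually orthogonal, and let $F_1,F_2,F_3,F_4\colon\mathbb B_I\to\mathbb C_I$ be the holomorphic functions supplied by Lemma \ref{eq:Splitting}, so that
\begin{equation*}
f_I(z)=F_1(z)+F_2(z)J+\big(F_3(z)+\overline{F_4(z)}J\big)K,\qquad z\in\mathbb B_I .
\end{equation*}
Fixing the isometric field identification $\mathbb C_I\cong\mathbb C$ (under which $\mathbb B_I$ becomes the unit disc $\mathbb D$), I regard $g:=(F_1,F_2,F_3,F_4)$ as a holomorphic map of $\mathbb D$ into $\mathbb C^4$, and write $z\in\mathbb D$ for the point corresponding to $w$.

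Two identities then have to be checked. First, since $\{1,I,J,IJ,K,IK,JK,(IJ)K\}$ is an orthonormal basis of $\mathbb O$, the four summands in the splitting of $f_I(z)$ lie in mutually orthogonal real $2$-planes; combining this with the Cayley--Dickson decomposition $\mathbb O=\mathbb H\oplus\mathbb H K$ (cf. \eqref{Generation rule02}) and $|ab|=|a||b|$ gives $|f(x+yI)|^2=\sum_{k=1}^{4}|F_k(z)|^2=\|g(z)\|^2$ on $\mathbb B_I$, so that $g$ maps $\mathbb D$ into the open unit ball of $\mathbb C^4$. Second, decomposing $f(0)$ and $f(w)$ along the same four $2$-planes and using that $\mathcal R_J$, $\mathcal R_K$ and complex conjugation on $\mathbb C_I$ are isometries (Lemma \ref{unitary multipliers}) --- which is exactly what strips off the trailing $J$, $K$ and the conjugate on $F_4$ --- one finds $\langle f(0),f(w)\rangle=\sum_{k=1}^{4}{\rm{Re}}\big(F_k(0)\overline{F_k(z)}\big)={\rm{Re}}\,\langle g(0),g(z)\rangle$, the right-hand inner product being the Hermitian one on $\mathbb C^4$.

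Granting these, I would invoke the classical Schwarz--Pick inequality for a holomorphic $g\colon\mathbb D\to\mathbb C^4$ with $\|g\|<1$:
\begin{equation*}
\frac{1-\|g(z)\|^2}{1-|z|^2}\ \geq\ \frac{\big|1-\langle g(0),g(z)\rangle\big|^2}{1-\|g(0)\|^2},\qquad z\in\mathbb D .
\end{equation*}
For $g(0)=0$ this is $\|g(z)\|\le|z|$, obtained by applying the one-variable Schwarz lemma to $z\mapsto\langle g(z),g(z_0)\rangle/\|g(z_0)\|$ for each fixed $z_0$; the general case follows by composing $g$ with the automorphism $\varphi_a$ of the ball sending $a:=g(0)$ to $0$, applying the previous case to $\varphi_a\circ g$, and substituting the identity $1-\|\varphi_a(\zeta)\|^2=(1-\|a\|^2)(1-\|\zeta\|^2)\,\big|1-\langle\zeta,a\rangle\big|^{-2}$ with $\zeta=g(z)$. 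Since $\|g(0)\|=|f(0)|$, $\|g(z)\|=|f(w)|$, $|z|=|w|$, and $|1-{\rm{Re}}(c)|\le|1-c|$ for every $c\in\mathbb C$, the second identity makes the right-hand side above dominate $\big|1-\langle f(0),f(w)\rangle\big|^2/(1-|f(0)|^2)$, and we conclude
\begin{equation*}
\frac{1-|f(w)|^2}{1-|w|^2}=\frac{1-\|g(z)\|^2}{1-|z|^2}\geq\frac{\big|1-\langle g(0),g(z)\rangle\big|^2}{1-\|g(0)\|^2}\geq\frac{\big|1-\langle f(0),f(w)\rangle\big|^2}{1-|f(0)|^2},
\end{equation*}
which is exactly \eqref{moudulus inequality}.

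I expect the only genuinely delicate step to be the second identity: one must confirm that the four components stay in fixed, mutually orthogonal $2$-planes when $f_I$ is evaluated at the two different arguments $0$ and $w$, and carefully track the conjugation on $F_4$, so that the real $\mathbb R^8$-inner product of $f(0)$ and $f(w)$ collapses --- with no stray sign --- precisely to the real part of the Hermitian inner product of $g(0)$ and $g(z)$. Once that bookkeeping is done, the rest is the classical disc-to-ball Schwarz--Pick lemma together with the trivial estimate $|1-{\rm{Re}}(c)|\le|1-c|$, which is the only ingredient needed to pass from the Hermitian-inner-product estimate for $g$ to the real-inner-product estimate \eqref{moudulus inequality} for $f$.
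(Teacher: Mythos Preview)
Your proposal is correct and follows essentially the same approach as the paper: split $f_I$ via Lemma \ref{eq:Splitting}, view $(F_1,F_2,F_3,F_4)$ as a holomorphic map from $\mathbb B_I$ into the unit ball of $\mathbb C_I^4$, apply the classical disc-to-ball Schwarz--Pick inequality (the paper cites \cite[Theorem 8.1.4]{Rudin} rather than sketching a proof as you do), observe that $\langle f(0),f(w)\rangle={\rm Re}\,\langle F(0),F(z)\rangle_{\mathbb C_I^4}$, and use $|1-{\rm Re}(c)|\le|1-c|$ to conclude. The only cosmetic difference is that you spell out the verification of the inner-product identity and the proof of Schwarz--Pick more explicitly than the paper does.
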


\begin{proof}
Fix an arbitrary  point  $w\in\mathbb B$, let $I\in\mathbb S$ be such that $w\in \mathbb B_I$. Then by the splitting lemma (Lemma \ref{eq:Splitting}), we can find $J$ and $K$ in $\mathbb S$, such that $I, J, IJ, K$ are mutually perpendicular with respect to the standard Euclidean inner product on $\mathbb O$ and there are  four holomorphic functions
$F_k: \mathbb B_I\rightarrow\mathbb B_I$, $k=1,2,3,4$,  such that
\begin{equation}\label{splitting}
f_I(z)=F_1(z)+F_2(z)J+\big(F_3(z)+\overline{F_4(z)}J\big)K, \qquad \forall\, z\in\mathbb B_I.
\end{equation}
Let  $B^4\subset\mathbb C_I^4$ be the open unit ball. We consider the holomorphic mapping $F:\mathbb B_I\rightarrow \mathbb C_I^4$ given by
$$F(z):=\big(F_1(z), F_2(z), F_3(z), F_4(z)\big), $$
which maps $\mathbb B_I$ into $B^4$
in virtue of the fact that
$$|F(z)|^2=\sum\limits_{k=1}^4|F_k(z)|^2=|f(z)|^2<1$$
for all $z\in\mathbb B_I$. Now it follows from the classical Schwarz-Pick lemma (see e.g. \cite[Theorem 8.1.4]{Rudin}) that
\begin{equation}\label{norm-Schwarz}
 \frac{\big|1-\big\langle F(0), F(z)\big\rangle_{\mathbb C_{I}^4}\big|^2}{\big(1-|F(0)|^2\big)\big(1-|F(z)|^2\big)}\leq\frac{1}{1-|z|^2},
 \qquad \forall\, z\in\mathbb B_{I},
\end{equation}
where $\langle \,\, ,\,\rangle_{\mathbb C_{I}^4}$ denotes the standard Hermitian inner product on $\mathbb C_{I}^4$, i.e. for any two vectors $\alpha=(\alpha_1,\ldots, \alpha_4)$, $\beta=(\beta_1,\ldots, \beta_4)\in \mathbb C_{I}^4$,
$$\langle \alpha, \beta\rangle_{\mathbb C_{I}^4}=\sum_{k=1}^4\alpha_k \overline{\beta}_k.$$
Once notice that $\big\langle f(0), f(w)\big\rangle={\rm{Re}}\big(\langle F(0), F(z)\rangle_{\mathbb C_{I}^4}\big)\in\mathbb R$, inequality  (\ref{moudulus inequality}) immediately follows from (\ref{norm-Schwarz}).
\end{proof}

%\begin{remark}\label{Splitting_remark}
%In the proof of the previous lemma, we have used the so-called splitting lemma in the octonionic setting for slice regular
%functions, which states roughly that one can represent a regular function on $\mathbb O$ as three-triples of holomorphic functions and one anti-holomorphic functions (also compare to \cite[Lemma 2.4]{Ghiloni3}). From which we see easily that what asserted in \cite[Lemma 2.7]{GS50} (see also \cite[Lemma 10.3]{GSS}) is incorrect in general. This new correct version of splitting lemma is easily obtained from the well-known \textit{Cayley-Dickson process}  (cf. \cite[Section 2]{Ghiloni2}) by which $\mathbb O$ can be obtained from $\mathbb H$.
%\end{remark}

Now we come to prove Theorem $\ref{BSL}$.

\begin{proof}[Proof of Theorem $\ref{BSL}$]
We first prove the assertion (i). The proof of the first equality in (\ref{Schwarz ineq1})
 is essentially the same as the corresponding part in  the proof of \cite[Theorem 4]{WR}.
First, it follows from inequality  (\ref{moudulus inequality}) that the directional derivative of $|f|^2$ along $\xi$ at the boundary point $\xi\in\partial\mathbb B$ satisfies that
\begin{equation}\label{radial der}
\frac{\partial |f|^2}{\partial \xi}(\xi)\geq 2\frac{\big|1-\big\langle f(0), f(\xi)\big\rangle\big|^2}{1-|f(0)|^2}.
\end{equation}
However,
\begin{equation}\label{tangent der}
\frac{\partial |f|^2}{\partial \tau}(\xi)=0, \qquad \forall\, \,\tau\in T_{\xi}(\partial \mathbb B)\cong \mathbb R^7.
\end{equation}
Indeed, for each unit tangent vector $\tau\in T_{\xi}(\partial \mathbb B)$, take a smooth curve
$\gamma:(-1,1)\rightarrow\overline{\mathbb B}$ such that
$$\gamma(0)=\xi,\quad \gamma'(0)=\tau.$$
By definition we have
$$\frac{\partial |f|^2}{\partial \tau}(\xi)=\left.\bigg(\frac{d}{dt}\big|f(\gamma(t))\big|^2\bigg)\right|_{t=0}=0,$$
since the function $|f(\gamma(t))\big|^2$ in $t$ attains its maximum at the point $t=0$.

In view of Lemma \ref{D-derivative on O}, we have
$$\frac{\partial f}{\partial v}(\xi)=v\partial_sf(\xi)+(\xi v-v\overline{\xi}\,)R_{\overline{\xi}}R_{\xi}f(\xi),\qquad \forall\,v\in\partial \mathbb B,$$
from which and Lemma \ref{unitary multipliers} it follows that
\begin{equation}\label{der-relation1}
\begin{split}
\frac{\partial |f|^2}{\partial v}(\xi)
&=2\Big\langle\frac{\partial f}{\partial v}(\xi),f(\xi)\Big\rangle\\
&=2\Big\langle v\partial_sf(\xi)+(\xi v-v\overline{\xi}\,)R_{\bar{\xi}}R_{\xi}f(\xi),f(\xi)\Big\rangle\\
&=2\Big\langle v,f(\xi)\overline{\partial_sf(\xi)}+
\bar{\xi}\Big(f(\xi)\overline{R_{\bar{\xi}}R_{\xi}f(\xi)}\Big)-
\Big(f(\xi)\overline{R_{\bar{\xi}}R_{\xi}f(\xi)}\Big)\xi\Big\rangle\\
&=:2\big(A+B\big),
\end{split}
\end{equation}
where
\begin{equation}\label{exp-A}
\begin{split}
A&=\Big\langle v,\,f(\xi)\Big(\overline{\partial_sf(\xi)
 -\bar{\xi}R_{\bar{\xi}}R_{\xi}f(\xi)}\Big)\Big\rangle \\
&=\Big\langle v,\,f(\xi)\Big(\overline{f'(\xi)-\xi R_{\bar{\xi}}R_{\xi}f(\xi)}\Big)\Big\rangle,
\end{split}
\end{equation}
and
\begin{equation}\label{exp-B}
B=\Big\langle v,\,\bar{\xi}\Big(f(\xi)\overline{R_{\bar{\xi}}R_{\xi}f(\xi)}\Big)-\big[f(\xi), \overline{R_{\bar{\xi}}R_{\xi}f(\xi)}, \xi\big]\Big\rangle.
\end{equation}
The second equality in (\ref{exp-A}) follows from  equality (\ref{derivative-relation}).
Substituting the following simple equalities
\begin{equation*}
\begin{split}
f(\xi)\Big(\overline{R_{\bar{\xi}}R_{\xi}f(\xi)}\bar{\xi}\Big)
&=\Big(f(\xi)\overline{R_{\bar{\xi}}R_{\xi}f(\xi)}\Big)
\bar{\xi}-\big[f(\xi), \overline{R_{\bar{\xi}}R_{\xi}f(\xi)},\bar{\xi}\,\big]\\
&=\Big(f(\xi)\overline{R_{\bar{\xi}}R_{\xi}f(\xi)}\Big)
\bar{\xi}+\big[f(\xi), \overline{R_{\bar{\xi}}R_{\xi}f(\xi)},\xi\,\big]
\end{split}
\end{equation*}
into  the second equality in (\ref{exp-A}) yields
\begin{equation}\label{exp-A01}
\begin{split}
A&=\Big\langle v,\,f(\xi)\overline{f'(\xi)}-\Big(f(\xi)\overline{ R_{\bar{\xi}}R_{\xi}f(\xi)}\Big)\bar{\xi}-\big[f(\xi), \overline{R_{\bar{\xi}}R_{\xi}f(\xi)},\xi\,\big]\Big\rangle.
\end{split}
\end{equation}
Substituting (\ref{exp-B}) and (\ref{exp-A01}) into (\ref{der-relation1}) gives
\begin{equation}\label{der-relation2}
\begin{split}
\frac{\partial |f|^2}{\partial v}(\xi)
&=2\Big\langle v,f(\xi)\overline{f'(\xi)}+\big[\bar{\xi}, f(\xi)\overline{R_{\bar{\xi}}R_{\xi}f(\xi)}\,\big]-2\big[f(\xi), \overline{R_{\bar{\xi}}R_{\xi}f(\xi)}, \xi\big]\Big\rangle\\
&=2\Big\langle v,f(\xi)\overline{f'(\xi)}+\big[\bar{\xi}, f(\xi)\overline{R_{\bar{\xi}}R_{\xi}f(\xi)}\,\big]+2\big[\xi, f(\xi), R_{\bar{\xi}}R_{\xi}f(\xi)\big]
\Big\rangle.
\end{split}
\end{equation}
Now it follows from $(\ref{tangent der})$ and $(\ref{der-relation2})$ that
$$f(\xi)\overline{f'(\xi)}+\big[\bar{\xi}, f(\xi)\overline{R_{\bar{\xi}}R_{\xi}f(\xi)}\,\big]+2\big[\xi, f(\xi), R_{\bar{\xi}}R_{\xi}f(\xi)\big]\perp
 T_{\xi}(\partial \mathbb B)$$
so that in view of $(\ref{radial der})$ and $(\ref{der-relation2})$,
\begin{equation*}
\begin{split}
\frac{\partial |f|}{\partial \xi}(\xi)
&=\overline{\xi}\Big(f(\xi)\overline{f'(\xi)}+\big[\bar{\xi}, f(\xi)\overline{R_{\bar{\xi}}R_{\xi}f(\xi)}\,\big]+2\big[\xi, f(\xi), R_{\bar{\xi}}R_{\xi}f(\xi)\big]\Big)
\\
&\geq \frac{\big|1-\big\langle f(0), f(\xi)\big\rangle\big|^2}{1-|f(0)|^2},
\end{split}
\end{equation*}
which completes the proof of $(\ref{Schwarz ineq1})$.

To prove $(\ref{Schwarz ineq2})$, notice first that the first equality in $(\ref{Schwarz ineq2})$ directly follows from  $(\ref{Schwarz ineq1})$. It remains to prove the following inequality
$$\frac{\partial |f|}{\partial \xi}(\xi)\geq\frac{2}{1+{\rm{Re}}f'(0)}.$$
To this end, let $I\in\mathbb S$ be such that $\xi\in \partial\mathbb B\cap \mathbb C_I$. Then by the splitting lemma (Lemma \ref{eq:Splitting}), we can find $J$ and $K$ in $\mathbb S$, such that $I, J, IJ, K$ are mutually perpendicular and if $\mathcal{H}$ is the subspace of $\mathbb O$ generated by $\{1, I, J, IJ\}$, then there are two regular functions
$F: \mathbb B\cap\mathcal{H}\rightarrow\mathbb B\cap\mathcal{H}$ and $G: \mathbb B\cap\mathcal{H}\rightarrow\mathbb B\cap\mathcal{H}K$  such that
$$f(w)=F(w)+G(w),\qquad \forall\, w\in\mathbb B\cap\mathcal{H}.$$
 Then for each $w\in\mathbb B\cap\mathcal{H}$, we have
\begin{equation}\label{squared-norm}
|f(w)|^2=|F(w)|^2+|G(w)|^2
\end{equation}
and
$$f'(w)=F'(w)+G'(w).$$
Moreover,
$$F(\xi)=\xi, \qquad G(\xi)=0, \qquad {\rm{Re}}\,f'(0)={\rm{Re}}\,F'(0).$$
Now it follows from Corollary \ref{Cor-Schwarz} below that
$$\frac{\partial |f|}{\partial \xi}(\xi)
=\frac{\partial |F|}{\partial \xi}(\xi)
=F'(\xi)-\big[\xi,R_{\bar{\xi}}R_{\xi}F(\xi)\big]
\geq\frac{2}{1+{\rm{Re}}F'(0)}=\frac{2}{1+{\rm{Re}}f'(0)}.$$
If equality holds for inequality in (\ref{Schwarz ineq2}), then it again follows from Corollary \ref{Cor-Schwarz} below that
\begin{equation}\label{F-expression}
F(w)=w\big(1-wa\bar{\xi}\,\big)^{-\ast}\ast\big(w-a\xi\big)\bar{\xi}
\qquad \forall\, w\in\mathbb B\cap\mathcal{H},
\end{equation}
for some constant $a\in [-1,1)$. Furthermore, it follows from equality in (\ref{squared-norm}) that
$$|G(w)|^2=|f(w)|^2-|F(w)|^2\leq 1-|F(w)|^2, \qquad \forall\, w\in\mathbb B\cap\mathcal{H},$$
which together with (\ref{F-expression}) implies that $G\equiv0$, in virtue of the maximum principle (Theorem \ref{MP} below),
and hence
$$f(w)={\rm{ext}}\,F(w)=w\big(1-wa\bar{\xi}\,\big)^{-\ast}\ast\big(w\bar{\xi}-a\big),\qquad \forall\, w\in\mathbb B.$$
Therefore, the equality in inequality (\ref{Schwarz ineq2}) can hold only for regular self-mappings  of  the form (\ref{f-expression}), and a direct calculation shows that it does indeed hold for all such regular self-mappings. Now  the proof is complete.
\end{proof}

\begin{remark}\label{Non-real}
It is worth remarking here that, as in the quaternionic setting, the Lie bracket
in (\ref{Schwarz ineq2}) does not necessarily vanish
and $f'(\xi)$ may not be a real number. The following example comes an explicit counterexample.
\end{remark}

\begin{example}\label{Non-real-example}
Fix two imaginary units $I$, $J\in \mathbb S$ with $I\bot J$. Set
$$\varphi(w)=w\big(1+wI/2\big)^{-\ast}\ast\big(I/2-w\big).$$
Then the restriction $\varphi_I$ of $\varphi$ to $\mathbb B_I$ is a holomorphic Blaschke product of order 2 so that $\varphi$ is a regular self-mapping of $\mathbb B$, in virtue of Proposition \ref{convex combination identity}. Define another  regular function $f$ on $\mathbb B$ given by
$$f(w)=\varphi(w)\ast J=w\big(w^2+4\big)^{-1}\Big(2(w^2+1)(IJ)-3wJ\Big).$$
We claim that $f$ maps $\mathbb B$ into $\mathbb B$. We argue by contradiction and suppose that there is a point $\omega_0\in\mathbb O\setminus \mathbb B$ such that $f-\omega_0=(\varphi+\omega_0 J)\ast J$ has a zero in $\mathbb B$. By \cite[Corollary 25]{Ghiloni1},
$$\bigcup_{w\in \mathcal{Z}_{f-\omega_0}}\mathbb S_{w}=
\bigcup_{w\in \mathcal{Z}_{\varphi+\omega_0 J}}\mathbb S_{w}.$$
This shows that $\varphi+\omega_0J$ also has a zero in $\mathbb B$, contradicting the fact that $\varphi(\mathbb B)\subseteq \mathbb B$. Therefore, $f(\mathbb B)\subseteq \mathbb B$.
Moreover, it is evident that $f$ is regular on $\overline{\mathbb B}$ satisfying both $f(0)=0$ and $f(J)=J$. Thus $f$ verifies all the assumptions in Theorem \ref{BSL} (ii). However, we find that $f'(J)$ is indeed not a real number. In fact, a straightforward calculation shows that
$$f'(J)=\frac43(2-IJ)\notin \mathbb R, \qquad R_{-J}R_Jf(J)=\frac23(I-2J), $$
while
$$f'(J)-\big[J, \, R_{-J}R_Jf(J)\big]=\frac83>1$$
as predicated by Theorem \ref{BSL} (ii). One can also shows that
$$\frac{\partial |f|}{\partial J}(J)=\frac83$$
using the obvious fact that $f_J(w)=\varphi_J(w)J$ for all $w\in\mathbb B_J$, together with Proposition \ref{convex combination identity}.
\end{example}

The regular functions of the form (\ref{f-expression}) are indeed self-mappings of the open unit ball $\mathbb B\subset\mathbb O$, due to the following result:

\begin{proposition}\label{convex combination identity}
Let $f$ be a regular function on a symmetric slice domain  $\Omega\subseteq \mathbb O$ such that $f(\Omega_I)\subseteq \mathbb C_I $ for some $I\in \mathbb S $.  Then
the convex combination identity
\begin{equation}\label{convex combination identity01}
\big|f(x+yJ)\big|^2 =\frac{1+\langle I,J\rangle}{2}\big|f(x+yI)\big|^2+
\frac{1-\langle I,J\rangle}{2}\big|f(x-yI)\big|^2
\end{equation}
holds for every $ x+yJ \in \Omega$.
\end{proposition}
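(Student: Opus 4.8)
The plan is to read the identity off the \emph{representation formula} for octonionic slice regular functions on symmetric slice domains, followed by a short norm computation in which non‑associativity is tamed by Artin's theorem. Since $\Omega$ is axially symmetric, for every $x+yJ\in\Omega$ both points $x\pm yI$ lie in $\Omega$, and the representation formula gives
\[
f(x+yJ)=\frac12\big(f(x+yI)+f(x-yI)\big)+\frac12\,J\Big(I\big(f(x-yI)-f(x+yI)\big)\Big)
\]
(see \cite{GS50}, \cite{Ghiloni1}; this is the octonionic analogue of the quaternionic representation formula). Write $a:=f(x+yI)$ and $b:=f(x-yI)$. The hypothesis $f(\Omega_I)\subseteq\mathbb C_I$ forces $a,b\in\mathbb C_I$, hence so do $u:=\frac12(a+b)$ and $v:=\frac12(b-a)$; and since $I$, $J$ and $v\in\mathbb C_I$ all lie in the subalgebra generated by the two elements $I,J$, which is \emph{associative} by Artin's theorem (Theorem \ref{Artin-thm}), the formula simplifies to
\[
f(x+yJ)=u+(JI)\,v,\qquad u,v\in\mathbb C_I .
\]
Everything then reduces to computing $\big|u+(JI)v\big|^2$.

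Next I would expand
\[
\big|u+(JI)v\big|^2=|u|^2+2\big\langle u,(JI)v\big\rangle+\big|(JI)v\big|^2,
\]
with $\big|(JI)v\big|=|JI|\,|v|=|v|$ by multiplicativity of the octonionic norm (again a consequence of Artin's theorem). The only term needing work is the cross term — and this is where the one genuine subtlety, non‑associativity, appears. Using $\langle p,q\rangle=\mathrm{Re}(p\bar q)$ and $\overline{JI}=\bar I\,\bar J=IJ$ (valid since $I,J$ are imaginary), one has $\big\langle u,(JI)v\big\rangle=\mathrm{Re}\big(u(\bar v(IJ))\big)$. Since $u,\bar v,IJ$ all lie in the associative subalgebra generated by $I$ and $J$, I may reassociate and rewrite this as $\mathrm{Re}\big((u\bar v)(IJ)\big)$. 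Writing $u\bar v=\alpha+\beta I$ with $\alpha,\beta\in\mathbb R$, so that $\alpha=\mathrm{Re}(u\bar v)=\langle u,v\rangle$, and using $I(IJ)=(II)J=-J$ inside that subalgebra together with $\mathrm{Re}(J)=0$ and $\mathrm{Re}(IJ)=-\langle I,J\rangle$ (the latter from $(\ref{inner product on O})$, since $\bar J=-J$), I obtain
\[
\big\langle u,(JI)v\big\rangle=-\langle u,v\rangle\,\langle I,J\rangle .
\]

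Finally, by the polarization identity $(\ref{inner and norm on O})$ applied to $u=\frac12(a+b)$ and $v=\frac12(b-a)$ one has $|u|^2+|v|^2=\frac12\big(|a|^2+|b|^2\big)$ and $\langle u,v\rangle=\frac14\big(|b|^2-|a|^2\big)$, whence
\[
\big|f(x+yJ)\big|^2=\frac12\big(|a|^2+|b|^2\big)-\frac12\big(|b|^2-|a|^2\big)\langle I,J\rangle
=\frac{1+\langle I,J\rangle}{2}|a|^2+\frac{1-\langle I,J\rangle}{2}|b|^2,
\]
which, recalling $a=f(x+yI)$ and $b=f(x-yI)$, is precisely $(\ref{convex combination identity01})$. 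The argument carries essentially no analytic content; the point demanding care is the reassociation in the cross term (and the earlier collapse of the representation formula), and in each case it is licensed by the single observation that every product occurring lives in the associative subalgebra generated by the two imaginary units $I$ and $J$. The hypothesis $f(\Omega_I)\subseteq\mathbb C_I$ enters exactly here: it guarantees $a,b,u,v\in\mathbb C_I$, so that this observation applies and the clean relations $I(IJ)=-J$ and $\mathrm{Re}(IJ)=-\langle I,J\rangle$ do the rest.
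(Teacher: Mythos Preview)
Your proof is correct and follows essentially the same route as the paper's: representation formula, reduction to $f(x+yJ)=u+(JI)v$ with $u,v\in\mathbb C_I$ via Artin's theorem, then a squared-norm expansion. The only cosmetic difference is in the cross term: the paper uses the unitary-multiplier Lemma~\ref{unitary multipliers} together with the decomposition $IJ=-\langle I,J\rangle+I\wedge J$ and the orthogonality of $\{1,I,I\wedge J,I(I\wedge J)\}$, whereas you compute $\mathrm{Re}\big((u\bar v)(IJ)\big)$ directly inside the associative subalgebra generated by $I$ and $J$; both extract the same factor $-\langle I,J\rangle$.
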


\begin{proof}
The idea is essentially the same as in \cite{RW2}. Let $I\in \mathbb S$ be   as described in  the proposition. First, it is easy to verify that for every $J\in \mathbb S$, the set $\big\{1, I, I\wedge J, I(I\wedge J)\big\}$ is an orthogonal set of $\mathbb O\simeq\mathbb R^8$. By the representation formula for regular functions (cf. \cite[Proposition 6]{Ghiloni1}),
\begin{equation}\label{repesentaion}
f(w)=\frac{1}{2}\big(f(z)+f(\bar z)\big)-\frac{1}{2}J\Big(I\big(f(z)-f(\bar z)\big)\Big)
\end{equation}
for every  $ w=x+yJ \in \Omega$ with $z=x+yI$ and $\bar z=x-yI$. By assumption,  $f(\Omega_I)\subseteq \mathbb C_I $. This together with   Artin's theorem for alternative algebras (Theorem \ref{Artin-thm}) allows us to rewrite equality (\ref{repesentaion}) as
\begin{equation}\label{repesentaion001}
f(w)=\frac{1}{2}\big(f(z)+f(\bar z)\big)-\frac{1}{2}(JI)\big(f(z)-f(\bar z)\big)
\end{equation}

Taking modulus on both sides of (\ref{repesentaion001}) and applying Lemma \ref{unitary multipliers} to obtain
\begin{equation}\label{norm-repesentaion}
\begin{split}
|f(w)|^2
=&\frac14\Big(\big|f(z)+f(\bar z)\big|^2+\big|f(z)-f(\bar z)\big|^2\Big)\\
&-\frac12\Big\langle f(z)+f(\bar z), \, (JI)\big(f(z)-f(\bar z)\big)\Big\rangle\\
=&\frac12\Big(|f(z)|^2+|f(\bar z)|^2\Big)
-\frac12\Big\langle \big(f(z)+f(\bar z)\big)\big(\,\overline{f(z)}-\overline{f(\bar z)}\,\big), \,J I\Big\rangle\\
=&: \frac12\Big(|f(z)|^2+|f(\bar z)|^2\Big)-\frac{1}{2}A,
\end{split}
\end{equation}
where
\begin{equation}\label{B-computation01}
A=\Big\langle \big(f(z)+f(\bar z)\big)\big(\,\overline{f(z)}-\overline{f(\bar z)}\,\big), \,J I\Big\rangle.
\end{equation}
Recalling equality in (\ref{relation-inner-wedge}), an orthogonality consideration gives
\begin{equation}\label{B-computation02}
\begin{split}
A&=-\langle I, J\rangle\Big\langle \big(f(z)+f(\bar z)\big)\big(\,\overline{f(z)}-\overline{f(\bar z)}\,\big), \,1\Big\rangle\\
&=-\langle I, J\rangle\Big\langle f(z)+f(\bar z), \,f(z)-f(\bar z)\Big\rangle\\
&=-\langle I, J\rangle \Big(|f(z)|^2-|f(\bar z)|^2\Big).
\end{split}
\end{equation}
Now the desired equality (\ref{convex combination identity01}) immediately follows by substituting  (\ref{B-computation02}) into (\ref{norm-repesentaion}). The proof is complete.
 \end{proof}

\begin{remark}
Together with Proposition \ref{convex combination identity}, the argument used in Example \ref{Non-real-example} also shows that the regular functions $f$ of the from
$$f(w)=\big(1-w\overline{u}\big)^{-\ast}\ast\big(w-u\big)\ast v$$ with $u\in \mathbb B$ and $v\in \partial \mathbb B$ are regular self-mappings of $\mathbb B$.
\end{remark}

\section{Proofs of Theorems \ref{regular diam-LT}, \ref{slice diam-LT} and \ref{Poukka}}
\label{applications of Thm BSL}
We begin with a notion of regular diameter, which is intimately related to a new regular composition (cf. \cite{RW}).

\begin{definition}\label{regular-composition}
Let $u\in \mathbb O$ and $f:\mathbb B\rightarrow \mathbb O$   a regular function with  Taylor expansion
$$f(w)=\sum\limits_{n=0}^{\infty}w^na_n.$$
We define the \textit{regular composition} of $f$ with the regular function $w\mapsto wu$  to be
$$f_u(w):=\sum\limits_{n=0}^{\infty}(wu)^{\ast n}\ast a_n=\sum\limits_{n=0}^{\infty}w^n(u^na_n).$$
\end{definition}
If $|u|=1$, the radius of convergence of the series expansion for $f_u$ is the same as that for $f$. Moreover, if $u$ and $w_0$ lie in the same plane $\mathbb C_I$, then $u$ and $w_0$ commute, and hence $f_u(w_0)=f(uw_0)$. In particular,  if $u\in \mathbb R$, then $f_u(w)=f(uw)$ for every $w\in\mathbb B$.

\begin{definition}\label{regular-diameter}
Let $f$ be a regular function on $\mathbb B$ with   Taylor expansion
$$f(w)=\sum\limits_{n=0}^{\infty}w^na_n.$$ For each $r\in(0, 1)$, the \textit{regular diameter} of the image of $r\mathbb B$ under $f$ is defined to be
\begin{equation}\label{regular-diameter01}
\widetilde{d}\big(f(r\mathbb B)\big):=\max_{u,v\in \overline{\mathbb B}}\max_{|w|\leq r}|f_u(w)-f_v(w)|.
\end{equation}
The \textit{regular diameter} of the image of $\mathbb B$ under $f$ is defined to be
\begin{equation}\label{regular-diameter02}
\widetilde{d}\big(f(\mathbb B)\big):=\lim_{r\rightarrow 1^-}\widetilde{d}\big(f(r\mathbb B)\big).
\end{equation}
\end{definition}

Clearly, $\widetilde{d}\big(f(r\mathbb B)\big)$ is an increasing function of $r\in(0,1)$; hence the limit in (\ref{regular-diameter02}) always exists. Therefore, $\widetilde{d}\big(f(\mathbb B)\big)$ is well-defined. Moreover, in view of the following maximum principle for  regular functions, $\widetilde{d}\big(f(r\mathbb B)\big)/2r$ is an increasing function of $r\in(0,1)$ as well (see (\ref{diameter-quo01}) below).

\begin{theorem}\label{MP}
Let $f:\Omega\rightarrow \mathbb O$ be a regular function on a symmetric slice domain $\Omega\subseteq\mathbb O$. If there exist a $I\in\mathbb S$ such that the restriction $|f_I|$  of $|f|$ to $\Omega_I$ attains a local maximum at some point $w_0\in \Omega_I$, then $f$ is constant.
\end{theorem}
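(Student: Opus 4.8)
The plan is to reduce the statement to the classical maximum modulus principle on the single slice $\mathbb C_I$ by means of the splitting lemma, and then to propagate the resulting constancy from $\Omega_I$ to all of $\Omega$ via the representation formula; no genuinely octonionic phenomenon is felt, since the whole analytic content lives on the two-dimensional slice $\Omega_I$. Concretely, fix the $I\in\mathbb S$ supplied by the hypothesis; because $\Omega$ is a slice domain, $\Omega_I$ is a connected domain in $\mathbb C_I$. Applying the splitting lemma (Lemma \ref{eq:Splitting}) I choose $J,K\in\mathbb S$ with $I,J,IJ,K$ mutually orthogonal and holomorphic functions $F_1,F_2,F_3,F_4\colon\Omega_I\to\mathbb C_I$ with $f_I(z)=F_1(z)+F_2(z)J+\big(F_3(z)+\overline{F_4(z)}J\big)K$; exactly as in the proof of Lemma \ref{lem: moudulus inequality} this gives $|f_I(z)|^2=\sum_{k=1}^4|F_k(z)|^2$ for all $z\in\Omega_I$. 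Hence the $\mathbb C_I^4$-valued holomorphic map $F:=(F_1,F_2,F_3,F_4)$ satisfies $|F|=|f_I|$, so $|F|$ attains a local maximum at $w_0$.

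The heart of the argument is to show that such an $F$ must be constant. Set $c:=F(w_0)$ and consider the scalar holomorphic function $h(z):=\langle F(z),c\rangle_{\mathbb C_I^4}=\sum_{k=1}^4 F_k(z)\overline{c_k}$ on $\Omega_I$. By Cauchy--Schwarz $|h(z)|\le|F(z)|\,|c|$, and on a neighbourhood of $w_0$ one has $|F(z)|\le|c|$, so $|h(z)|\le|c|^2=|h(w_0)|$ there; thus $|h|$ attains a local maximum at $w_0$, and the classical maximum modulus principle forces $h\equiv|c|^2$ on the connected set $\Omega_I$. If $c=0$, then $|F|$ is a nonnegative function with local maximum value $0$, so $F\equiv0$ near $w_0$; if $c\neq0$, then $|c|^2=|h(z)|\le|F(z)|\,|c|$ gives $|F(z)|\ge|c|$ throughout $\Omega_I$, which together with the local maximum gives $|F|\equiv|c|$ near $w_0$, whence
\[
|F(z)-c|^2=|F(z)|^2-2\,{\rm Re}\,\langle F(z),c\rangle_{\mathbb C_I^4}+|c|^2=|c|^2-2\,{\rm Re}\,h(z)+|c|^2=0
\]
near $w_0$, i.e. $F\equiv c$ there. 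In all cases $F$, and therefore $f_I$, is constant on a nonempty open subset of $\Omega_I$, and the identity theorem for holomorphic functions on the domain $\Omega_I$ promotes this to $f_I\equiv f(w_0)$ on $\Omega_I$.

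To pass from the slice to the whole domain I would invoke the representation formula (cf. \cite[Proposition 6]{Ghiloni1}, used already for (\ref{repesentaion})): for every $w=x+yL\in\Omega$, writing $z=x+yI$ and $\bar z=x-yI$ (both lying in $\Omega_I$ since $\Omega$ is axially symmetric), one has $f(w)=\tfrac12\big(f(z)+f(\bar z)\big)-\tfrac12 L\big(I(f(z)-f(\bar z))\big)$; since $f(z)=f(\bar z)=f(w_0)$ this collapses to $f(w)=f(w_0)$. Hence $f$ is constant on $\Omega$, as claimed. (Alternatively, $f-f(w_0)$ vanishes identically on $\Omega_I$ and one may quote the identity principle for octonionic slice regular functions directly.)

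The only non-formal point is the middle step: unlike in one complex variable, $|f_I|^2$ is not the squared modulus of a single $\mathbb C_I$-valued holomorphic function, so the scalar maximum principle cannot be applied to $f$ itself; the device of passing through the four-dimensional splitting and testing against the fixed vector $c=F(w_0)$ is precisely what restores the scalar statement. Everything else is routine and, notably, insensitive to the non-associativity of $\mathbb O$, since all the computations take place inside the single slice $\mathbb C_I$ or use only the multiplicativity of the norm on $\mathbb O$ guaranteed by Artin's theorem (Theorem \ref{Artin-thm}).
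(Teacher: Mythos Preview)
Your proof is correct and follows essentially the same route as the paper: both reduce to the holomorphic map $F=(F_1,F_2,F_3,F_4)\colon\Omega_I\to\mathbb C_I^4$ via the splitting lemma, note that $|F|=|f_I|$ has a local maximum, conclude $F$ is constant, and then extend to $\Omega$. The only difference is that the paper invokes the maximum principle for vector-valued holomorphic mappings as a black box (citing \cite[Theorem 2.8.3]{Klimek}) and the identity principle for slice regular functions, whereas you give a short self-contained proof of the vector-valued maximum principle via the scalar function $h(z)=\langle F(z),F(w_0)\rangle_{\mathbb C_I^4}$ and spell out the extension using the representation formula; this makes your argument slightly more elementary but otherwise identical in structure.
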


\begin{proof}
 We can split $f_{I}$ as $$f(z)=F_1(z)+F_2(z)J+\big(F_3(z)+\overline{F_4(z)}J\big)K, \qquad \forall\, z\in\Omega_I,$$
where $F_k: \Omega_I\rightarrow \mathbb C_I$, $k=1,2,3,4$, are four holomorphic functions, and $I, J, K$ enjoy the same property as   in the proof of Lemma \ref{lem: moudulus inequality}. Then the holomorphic mapping $F:\Omega_{I}\rightarrow \mathbb C_{I}^4$ given by
$$F(z):=\big(F_1(z), F_2(z), F_3(z), F_4(z)\big)$$
 satisfies that
$$|F(z)|^2=\sum\limits_{k=1}^4|F_k(z)|^2=|f(z)|^2$$
for all $z\in\Omega_{I}$. By assumption, $|F|$ attains a local maximum at the point $w_0\in \Omega_{I}$. Thus from the maximum principle for holomorphic mappings (cf. \cite[Theorem 2.8.3]{Klimek}) it immediately follows that $F$ is constant on $\Omega_{I}$, and $f$ is constant there as well, and in turn on $\Omega$ by the identity principle.
\end{proof}

We also need the following results.

\begin{proposition}\label{regular-diameter-relation}
Let $f$ be a regular function on $\mathbb B$. Then
\begin{equation}\label{regular-diameter-relation01}
 {\rm{diam}}\, f(\mathbb B)\leq \widetilde{d}\big(f(\mathbb B)\big)\leq 2\,{\rm{diam}}\, f(\mathbb B).
\end{equation}
\end{proposition}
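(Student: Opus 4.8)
The plan is to prove the two inequalities in $(\ref{regular-diameter-relation01})$ separately; the left one is elementary, while the right one reduces to a Schwarz-type property of the regular composition. \emph{For the lower bound ${\rm{diam}}\,f(\mathbb B)\le\widetilde{d}\big(f(\mathbb B)\big)$}, the key observation is that on a real argument the regular composition degenerates to an honest one: for $\rho\in(0,1)$ and any $u\in\overline{\mathbb B}$, the scalar $\rho$ lies in the same plane $\mathbb C_I$ as $u$, so by the remark following Definition \ref{regular-composition} one has $f_u(\rho)=f(\rho u)$. Hence, as $u$ runs over $\overline{\mathbb B}$, the point $\rho u$ runs over $\overline{B(0,\rho)}$, and $\{f_u(\rho):u\in\overline{\mathbb B}\}=f\big(\overline{B(0,\rho)}\big)$. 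Restricting the inner maximum in $(\ref{regular-diameter01})$ to the single value $w=\rho$ then gives
\[
\widetilde{d}\big(f(\rho\mathbb B)\big)\ \ge\ \max_{u,v\in\overline{\mathbb B}}\big|f_u(\rho)-f_v(\rho)\big|\ =\ {\rm{diam}}\,f\big(\overline{B(0,\rho)}\big)\ \ge\ {\rm{diam}}\,f\big(B(0,\rho)\big),
\]
and since $f(B(0,\rho))\nearrow f(\mathbb B)$ as $\rho\to 1^-$, letting $\rho\to 1^-$ yields the claim.

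\emph{For the upper bound $\widetilde{d}\big(f(\mathbb B)\big)\le 2\,{\rm{diam}}\,f(\mathbb B)$} we may assume $f$ bounded, otherwise the inequality is trivial. Fix $r\in(0,1)$ and, for $u,v\in\overline{\mathbb B}$ and $|w|\le r$, estimate through the common value $f(0)=f_u(0)=f_v(0)$:
\[
\big|f_u(w)-f_v(w)\big|\ \le\ \big|f_u(w)-f(0)\big|+\big|f(0)-f_v(w)\big|\ =\ \big|G_u(w)\big|+\big|G_v(w)\big|,
\]
where $G:=f-f(0)$ is regular with $G(0)=0$ and $f_u-f(0)=G_u$. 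Since $|G(z)|=|f(z)-f(0)|\le{\rm{diam}}\,f(\mathbb B)$ for all $z\in\mathbb B$, we have $\sup_{\mathbb B}|G|\le{\rm{diam}}\,f(\mathbb B)$, so the whole matter is reduced to the Schwarz-type estimate
\[
\sup_{\mathbb B}|G_u|\ \le\ \sup_{\mathbb B}|G|\qquad\text{for every }u\in\overline{\mathbb B}.
\]
Granting this, one gets $\big|f_u(w)-f_v(w)\big|\le 2\,{\rm{diam}}\,f(\mathbb B)$ for all admissible $u,v,w$, hence $\widetilde{d}\big(f(r\mathbb B)\big)\le 2\,{\rm{diam}}\,f(\mathbb B)$, and letting $r\to 1^-$ finishes the proof.

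\emph{It remains to prove the reduced estimate, and this is where the main obstacle lies.} The case $u=0$ is trivial; for $0<|u|<1$ one writes $u=|u|u_0$ with $|u_0|=1$ and checks directly from the power series that $G_u(w)=G_{u_0}(|u|\,w)$, so $\sup_{\mathbb B}|G_u|\le\sup_{\mathbb B}|G_{u_0}|$ and everything reduces to the case $|u|=1$. When $|u|=1$, pick $I_u\in\mathbb S$ with $u\in\mathbb C_{I_u}$; on this slice $z$ and $u$ commute, so by Artin's theorem (Theorem \ref{Artin-thm}) $G_u(z)=\sum_n z^n(u^na_n)=\sum_n(zu)^na_n=G(zu)$, and since multiplication by $u$ is a rotation of $\mathbb B_{I_u}$ this gives $|G_u|\le\sup_{\mathbb B}|G|$ on the whole slice $\mathbb B_{I_u}$. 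The hard step is to propagate this bound from the single slice $\mathbb C_{I_u}$ to all of $\mathbb B$: the representation formula $(\ref{repesentaion})$ expresses $G_u(x+yI)$ linearly through $G_u(x\pm yI_u)=G\big((x\pm yI_u)u\big)$, but, owing to the non-associativity, the crude triangle and Cauchy--Schwarz estimate extracted from it only yields a bound of the form $\sqrt{3}\,\sup_{\mathbb B}|G|$, which is too weak. What is really needed is the Schwarz lemma for the regular composition: composing a regular self-mapping of $\mathbb B$ with $w\mapsto wu$, $|u|\le 1$, again yields a regular self-mapping of $\mathbb B$ (cf. \cite{RW}, and, in the quaternionic case, the technique of \cite{Gen-Sar}); applied to $G/\sup_{\mathbb B}|G|$ this produces the reduced estimate at once. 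Thus the substantive content is precisely this Schwarz-type globalization, whereas the lower bound and the reduction of the upper bound to the estimate above are routine.
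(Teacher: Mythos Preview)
Your approach is essentially the same as the one the paper defers to: the paper omits the proof entirely, citing \cite[Proposition 3.8]{Gen-Sar}, and that argument has precisely the structure you describe --- the lower bound comes from evaluating $f_u$ at a real radius (where $f_u(\rho)=f(\rho u)$), and the upper bound is obtained by passing through the common value $f(0)$ and invoking the Schwarz--type estimate $\sup_{\mathbb B}|f_u|\le\sup_{\mathbb B}|f|$ for $|u|\le1$ (which in \cite{Gen-Sar} is their Proposition~3.5). Your reduction of the Schwarz step to the case $|u|=1$ via $G_u(w)=G_{u_0}(|u|w)$ and your identification $G_u(z)=G(zu)$ on the slice $\mathbb C_{I_u}$ (using Artin's theorem) are exactly right.

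One comment worth making explicit: you are correct that in the octonionic setting the naive use of the representation formula loses a constant, so the propagation from the slice $\mathbb C_{I_u}$ to all of $\mathbb B$ genuinely needs the Schwarz lemma for the regular composition $w\mapsto wu$ (this is where the paper's ``completely the same as in \cite{Gen-Sar}'' is perhaps a bit quick). The result you invoke from \cite{RW} does supply this, so your outsourcing is legitimate and matches the paper's own level of detail. In short: correct, same route, and you have articulated the one nontrivial point more carefully than the paper does.
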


\begin{lemma}\label{Constant lemma}
Let  $g$ be a regular function on $\mathbb B$ such that for each $w\in\mathbb B\setminus\{0\}$,
$$\big\langle I_w, g(w)\big\rangle=0, $$
where $I_{w}={\rm{Im}}\,w/|{\rm{Im}}\,w|$ is the pure imaginary unit identified by $w$. Then $g$ is a real constant function.
\end{lemma}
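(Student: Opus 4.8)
The plan is to restrict $g$ to a single slice $\mathbb C_I$, use the splitting lemma to rewrite the hypothesis as a statement about one holomorphic $\mathbb C_I$-valued function, conclude that this function is real-valued hence constant, and then let $I$ vary in $\mathbb S$ to pin $g$ down on the real axis — after which the power series expansion of $g$ on the ball finishes the argument.

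First I would fix $I\in\mathbb S$ and invoke Lemma \ref{eq:Splitting}: choose $J,K\in\mathbb S$ with $I,J,IJ,K$ mutually orthogonal and holomorphic $G_1,\dots,G_4:\mathbb B_I\to\mathbb C_I$ with $g_I(z)=G_1(z)+G_2(z)J+\big(G_3(z)+\overline{G_4(z)}J\big)K$. Since $\{1,I,J,IJ,K,IK,JK,(IJ)K\}$ is an orthonormal basis of $\mathbb O$ and the $G_k(z)$ lie in $\mathbb C_I=\mathbb R\oplus\mathbb R I$, the terms $G_2(z)J$ and $\big(G_3(z)+\overline{G_4(z)}J\big)K$ are orthogonal to $\mathbb C_I$, so $\langle I,g(z)\rangle$ equals the $I$-component of $G_1(z)$ for every $z\in\mathbb B_I$. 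For $z\in\mathbb B_I$ with ${\rm{Im}}\,z\neq0$ one has $I_z=\pm I$, so the hypothesis gives $\langle I,g(z)\rangle=0$, i.e.\ $G_1(z)\in\mathbb R$; because $\mathbb B_I\setminus\mathbb R$ is dense in $\mathbb B_I$ and $G_1$ is continuous, $G_1$ is real-valued throughout $\mathbb B_I$, hence equal to a real constant $c_I$ (a holomorphic function assuming only real values is constant). This is the decisive step: by itself the hypothesis merely forces $g$ to be real on the real axis, which is far too weak; the gain comes from noticing that $G_1$ is real on the \emph{whole} slice $\mathbb B_I$.

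Finally I would identify the constants and push the conclusion off the slice. Evaluating at $0$, $c_I=G_1(0)$ is the orthogonal projection of $g(0)$ onto $\mathbb C_I$, so ${\rm{Im}}\,g(0)\perp I$; as this holds for all $I\in\mathbb S$ and $\mathbb S$ spans the imaginary octonions, $g(0)=:\alpha\in\mathbb R$ and $c_I=\alpha$ for every $I$. For a real point $x\in(-1,1)$, the projection of $g(x)$ onto each $\mathbb C_I$ is then $\alpha$, and the same spanning argument forces ${\rm{Im}}\,g(x)=0$ and ${\rm{Re}}\,g(x)=\alpha$, so $g\equiv\alpha$ on $(-1,1)$. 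Writing $g(w)=\sum_{n\ge0}w^na_n$ on $\mathbb B=B(0,1)$ and differentiating at the origin against the identity $\sum_n a_nx^n\equiv\alpha$ on $(-1,1)$ gives $a_0=\alpha$ and $a_n=0$ for $n\ge1$, i.e.\ $g$ is the real constant $\alpha$. I expect the only real obstacle to be the bookkeeping in the first step — checking that $\langle I_w,g(w)\rangle$ on a slice detects precisely the first splitting component $G_1$, so that the hypothesis turns into the clean holomorphic statement that drives everything else.
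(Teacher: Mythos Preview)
Your proof is correct. The paper itself omits the proof of this lemma, pointing to \cite[Proposition~3.4]{Gen-Sar} for the quaternionic version and stating that the argument is completely the same; your approach---restrict to a slice, use the splitting lemma to isolate the first holomorphic component $G_1$, observe that the hypothesis forces $G_1$ to be real-valued hence constant, and then vary $I\in\mathbb S$ to trap $g$ on the real axis---is exactly the natural octonionic adaptation of that quaternionic argument.
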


The proofs of Proposition \ref{regular-diameter-relation}  and Lemma \ref{Constant lemma} are completely the same as those of \cite[Propositions 3.8 and 3.4]{Gen-Sar}, and so we omit them. Now we are ready to prove Theorem \ref{regular diam-LT}.

\begin{proof}[Proof of Theorem $\ref{regular diam-LT}$]
The proof is partly the same as that of \cite[Theorem 3.9]{Gen-Sar}, the main difference being that we use some extra technical treatments together with Theorem \ref{BSL}, instead of \cite[Proposition 3.2]{Gen-Sar}, which is not enough for our purpose because of the non-associativity of octonions.

We first prove inequality (\ref{regular-diam01}). To this end, we take $u, v\in\overline{\mathbb B}$ and consider the following auxiliary function
$$g_{u,v}(w)=\frac12 w^{-1}\big(f_u(w)-f_u(w)\big).$$
Then $g_{u,v}$ is regular on $\mathbb B$ with
\begin{equation}\label{g-value}
g_{u,v}(0)=\frac12 (u-v)f'(0).
\end{equation}
Applying the maximum principle (Theorem \ref{MP}) to the  regular function $g_{u,v}$
 yields that for each $r\in(0,1)$, we can write
$$\max_{|w|\leq r}|g_{u,v}(w)|=\max_{|w|\leq r}\frac{|f_u(w)-f_v(w)|}{2|w|}
=\frac1{2r}\max_{|w|\leq r} |f_u(w)-f_v(w)|,$$
which implies that
\begin{equation}\label{diameter-quo01}
\frac{\widetilde{d}\big(f(r\mathbb B)\big)}{2r}
=\frac1{2r}\max_{u,v\in \overline{\mathbb B}}\max_{|w|\leq r}|f_u(w)-f_v(w)|
=\max_{u,v\in \overline{\mathbb B}}\max_{|w|\leq r}|g_{u,v}(w)|.
\end{equation}
Therefore, $\widetilde{d}\big(f(r\mathbb B)\big)/2r$ is an increasing function of $r\in(0,1)$ and so always not more than
$$\lim_{r\rightarrow1^-}\frac{\widetilde{d}\big(f(r\mathbb B)\big)}{2r}
=\frac12 \widetilde{d}\big(f(\mathbb B)\big)=1.$$

This means that
\begin{equation}\label{regular-diam10}
\widetilde{d}\big(f(r\mathbb B)\big)\leq 2r
\end{equation}
for each $r\in(0, 1)$, proving inequality (\ref{regular-diam01}). To prove inequality (\ref{regular-diam02}), consider the odd part of $f$
$$f_{odd}(w)=\frac12\big(f(w)-f(-w)\big),$$
which is regular on $\mathbb B$ satisfying both $f_{odd}(0)=0$ and
$$|f_{odd}(w)|=\frac12|f(w)-f(-w)|\leq\frac12 \widetilde{d}\big(f(\mathbb B)\big)=1$$
for all $w\in\mathbb B$. Thus it follows from the Schwarz  lemma that
\begin{equation}\label{modulus of der}
|f'(0)|=|f'_{odd}(0)|\leq 1.
\end{equation}

Now we come to prove the last assertion in the theorem. Obviously, if $f(w)=f(0)+wf'(0)$ with $|f'(0)|=1$,  equality holds both in (\ref{regular-diam01}) and (\ref{regular-diam02}). Conversely, suppose that equality holds in (\ref{regular-diam02}), i.e. $|f'(0)|=1$. Thus $|f'_{odd}(0)|=1$, and then again by the Schwarz lemma,
$$f_{odd}(w)=wf'(0).$$
We next claim that in this case $\widetilde{d}\big(f(r\mathbb B)\big)=2r$
for each $r\in(0, 1)$. Indeed, from (\ref{g-value}) and (\ref{diameter-quo01}) it follows that for each $r\in(0, 1)$,
$$\frac{\widetilde{d}\big(f(r\mathbb B)\big)}{2r}\geq
\max_{u,v\in \overline{\mathbb B}}|g_{u,v}(0)|=\frac12 \max_{u,v\in \overline{\mathbb B}}|u-v||f'(0)|=1,$$
which together with (\ref{regular-diam10}) implies that
$$\widetilde{d}\big(f(r\mathbb B)\big)=2r$$
for each $r\in(0, 1)$, as claimed.

 Take $\xi\in\mathbb B\setminus\{0\}$ with $0<|\xi|=:r<1$ and set
 \begin{equation}\label{h-definition}
   h (w)=\frac12 \Big(f(w)-f(-\xi)\Big).
 \end{equation}
 Thus $h $ is regular on $\mathbb B$ satisfying
 $$ h (\xi)=\frac12 \Big(f(\xi)-f(-\xi)\Big)=f_{odd}(\xi)=\xi f'(0).$$
 Moreover, from the very definition (\ref{h-definition}) and Proposition \ref{regular-diameter-relation} it follows that
 \begin{equation}\label{h-condition}
\max_{|w|\leq r}|h (w)|
=\frac12 \max_{|w|\leq r}\big |f(w)-f(-\xi)\big|
\leq\frac12 {\rm{diam}}\, f(r\mathbb B)
\leq\frac12 \widetilde{d}\big(f(r\mathbb B)\big)
=r
=|h(\xi)|.
\end{equation}
Therefore, the regular function $h $ satisfies all the assumptions given in Theorem \ref{BSL}, and hence
\begin{equation}
\begin{split}
\frac{\partial |h|}{\partial \xi}(\xi)
&=\overline{\xi}\Big(h(\xi)\overline{h'(\xi)}+\big[\bar{\xi}, h(\xi)\overline{R_{\bar{\xi}}R_{\xi}h(\xi)}\,\big]+2\big[\xi, h(\xi), R_{\bar{\xi}}R_{\xi}h(\xi)\big]\Big)
\\
&>0.
\end{split}
\end{equation}
In particular,
\begin{equation}\label{Key relation01}
\begin{split}
0&=\Big\langle I_{\xi},\, \overline{\xi}\Big(h(\xi)\overline{h'(\xi)}+\big[\bar{\xi}, h(\xi)\overline{R_{\bar{\xi}}R_{\xi}h(\xi)}\,\big]+2\big[\xi, h(\xi), R_{\bar{\xi}}R_{\xi}h(\xi)\big]\Big)\Big\rangle
\\
&=\Big\langle I_{\xi}\xi,\,  h(\xi)\overline{h'(\xi)}+\big[\bar{\xi}, h(\xi)\overline{R_{\bar{\xi}}R_{\xi}h(\xi)}\,\big]+2\big[\xi, h(\xi), R_{\bar{\xi}}R_{\xi}h(\xi)\big] \Big\rangle
\\
&=\Big\langle I_{\xi}\xi,\,  h(\xi)\overline{h'(\xi)}\Big\rangle.
\end{split}
\end{equation}
Here $I_{\xi}={\rm{Im}}\,\xi/|{\rm{Im}}\,\xi|$ is the pure imaginary unit identified by $\xi$, the second equality follows from  Lemma \ref{unitary multipliers}, and the last one follows from Lemma \ref{associator} and its proof.

Substituting the values of $h(\xi)$ and $h'(\xi)$ into the preceding inequalities yields that
\begin{equation}
\begin{split}
0&=\frac1 {r^2}\Big\langle \xi I_{\xi},\,  \big(\xi f'(0)\big)\overline{f'(\xi)}\Big\rangle
\\
&=\frac1 {r^2}\Big\langle \xi I_{\xi},\, \big[\xi, f'(0), \overline{f'(\xi)}\,\big]+\xi \big(f'(0)\overline{f'(\xi)}\,\big)\Big\rangle
\\
&=\frac1 {r^2}\Big\langle \xi I_{\xi},\,\xi \big(f'(0)\overline{f'(\xi)}\,\big)\Big\rangle
\\
&=\Big\langle I_{\xi},\, f'(0)\overline{f'(\xi)}\, \Big\rangle
\\
&=-\Big\langle I_{\xi},\, f'(\xi)\overline{f'(0)}\, \Big\rangle.
\end{split}
\end{equation}
Here we have again used Lemmas \ref{unitary multipliers} and \ref{associator}. Therefore, for each $\xi\in\mathbb B\setminus\{0\}$,
\begin{equation}\label{desried result}
\begin{split}
\Big\langle I_{\xi},\, f'(\xi)\ast\overline{f'(0)}\Big\rangle
&=\sum_{n=1}^{\infty} n \Big\langle I_{\xi},\, \xi^{n-1}\big(a_n\overline{f'(0)}\,\big)\Big\rangle
\\
&=\sum_{n=1}^{\infty} n \Big\langle I_{\xi},\, \big(\xi^{n-1}a_n\big)\overline{f'(0)}-\big[\xi^{n-1}, a_n, \overline{f'(0)}\,\big]\Big\rangle
\\
&=\sum_{n=1}^{\infty} n \Big\langle I_{\xi},\, \big(\xi^{n-1}a_n\big)\overline{f'(0)}\Big\rangle
\\
&=\Big\langle I_{\xi},\, f'(\xi)\overline{f'(0)}\, \Big\rangle
\\
&=0.
\end{split}
\end{equation}
Thus by Lemma \ref{Constant lemma}, the regular function
$$\xi\mapsto f'(\xi)\ast \overline{f'(0)}$$
must be a real constant function $|f'(0)|^2=1$, and hence $f'(\xi)\equiv f'(0)$. Consequently, $f$ is of the desired form
$$f(w)=f(0)+wf'(0).$$

Now to complete the proof, it suffices to show that how equality in (\ref{regular-diam01}) for some $r_0\in(0,1)$ implies equality in (\ref{regular-diam02}). This part is completely the same as that in the proof of \cite[Theorem 3.9]{Gen-Sar} and so we omit it.
\end{proof}

\begin{proof}[Proof of Theorem $\ref{slice diam-LT}$]
The proof of this theorem is similar to that of Theorem \ref{regular diam-LT}. The only difference  is that, instead of Theorem \ref{BSL}, we use the following simple observation.
With the regular function $h$ constructed in (\ref{h-definition})  and the function $f$ in this theorem in mind, if $|f'(0)|=1$, then $f_{odd}(w)=wf'(0)$ and ${\rm{diam}}\,f(r\mathbb B_I)=2r$
for each $r\in(0, 1)$ and each $I\in\mathbb S$, and hence as in (\ref{h-condition}) we have \begin{equation}\label{h-weak-condition}
  \max_{w\in r\overline{\mathbb B}_{I_{\xi}}}|h(w)|
  =\frac12 \max_{w\in r\overline{\mathbb B}_{I_{\xi}}}\big |f(w)-f(-\xi)\big|\leq\frac12 {\rm{diam}}\, f(r\mathbb B_{I_{\xi}})=r=|h(\xi)|.
\end{equation}
Thus as in the proof of Theorem \ref{BSL}, we deduce that the directional derivative of $|h|^2$ along the direction $v_0:=I_{\xi}\xi\in T_{\xi}\big(\partial (r\mathbb B_{I_{\xi}})\big)$ at the point $\xi \in \partial (r\mathbb B_{I_{\xi}})$ vanishes, i.e.
$$\frac{\partial |h|^2}{\partial v_0}(\xi)=0.$$
This together with (\ref{der-relation2}) with $f$ replaced by $h$ and $v$ by $v_0$ implies
\begin{equation}\label{Key relation02}
\begin{split}
0&=\Big\langle I_{\xi}\xi,\,  h(\xi)\overline{h'(\xi)}+\big[\bar{\xi}, h(\xi)\overline{R_{\bar{\xi}}R_{\xi}h(\xi)}\,\big]+2\big[\xi, h(\xi), R_{\bar{\xi}}R_{\xi}h(\xi)\big] \Big\rangle
\\
&=\Big\langle I_{\xi}\xi,\,  h(\xi)\overline{h'(\xi)}\Big\rangle,
\end{split}
\end{equation}
which is $(\ref{Key relation01})$ except the first equality there and is sufficient  for obtaining $(\ref{desried result})$ and in turn the desired result. This completes the proof.
\end{proof}

\begin{proof}[Proof of Theorem $\ref{Poukka}$]
The argument is standard (cf. \cite[p.149, Theorem 9.1]{BMMPR}). Write $a_k:=f^{(k)}(0)/k!$, so that
$$f(w)=\sum_{k=0}^{\infty}w^ka_k.$$
Fix a positive integer  $n$ and a $I\in\mathbb S$, consider  the regular function on $\mathbb B$ given by
\begin{equation}\label{g-definition00}
g(w)=\sum_{k=0}^{\infty}w^k\big((1-e^{k\pi I/n})a_k\big).
\end{equation}
Notice that $g_I(z)=f_I(z)-f_I(ze^{\pi I/n})$ holds for all $z\in \mathbb B_I$. Thus together with Lemma \ref{unitary multipliers}, the absolute and locally uniform convergence of the power series in $(\ref{g-definition00})$ implies that for each $r\in(0,1)$,
\begin{equation}\label{Area-consideration00}
\begin{split}
d^2&\geq\frac{1}{2\pi}\int_{-\pi}^{\pi}\big|g(re^{I\theta})\big|^{2}d\theta
\\
&=\frac{1}{2\pi}\sum_{k, l=0}^{\infty} r^{k+l}\int_{-\pi}^{\pi}\Big\langle e^{kI\theta}\big((1-e^{k\pi I/n})a_k\big),\, e^{lI\theta}\big((1-e^{l\pi I/n})a_l\big)\Big\rangle \,d\theta
\\
&=\frac{1}{2\pi}\sum_{k, l=0}^{\infty} r^{k+l}\int_{-\pi}^{\pi}\Big\langle e^{(k-l)I\theta}\big((1-e^{k\pi I/n})a_k\big),\,  (1-e^{l\pi I/n})a_l \Big\rangle\, d\theta
\\
&=\frac{1}{2\pi}\sum_{k, l=0}^{\infty} r^{k+l}\bigg\langle \bigg(\int_{-\pi}^{\pi}e^{(k-l)I\theta}\,d\theta\bigg)\big((1-e^{k\pi I/n})a_k\big),\, (1-e^{l\pi I/n})a_l \bigg\rangle
\\
&=\sum_{k=0}^{\infty}\big|1-e^{k\pi I/n}\big|^2|a_k|^2r^{2k}.
\end{split}
\end{equation}
Thus by Lebesgue's monotone convergence theorem,
\begin{equation}\label{Area-consideration}
\sum_{k=0}^{\infty}\big|1-e^{k\pi I/n}\big|^2|a_k|^2=\lim\limits_{r\rightarrow 1^-} \sum_{k=0}^{\infty}\big|1-e^{k\pi I/n}\big|^2|a_k|^2r^{2k}\leq d^2.
\end{equation}
In particular,  $$|a_n|\leq \frac d2,$$
which is precisely inequality $(\ref{Cauchy type})$.

If equality holds in $(\ref{Cauchy type})$ for some $n_0$, then (\ref{Area-consideration}) with $n$ replaced by $n_0$ implies that
$$(1-e^{k\pi I/n_0})a_k=0$$
for all $k\neq n_0$. In particular, $a_k=0$ whenever $k$ is not a multiple of $n_0$. Thus
\begin{equation}\label{fh-relation}
f(w)=h(w^{n_0}),
\end{equation}
where
$$h(w):=\sum_{k=0}^{\infty}w^ka_{kn_0},$$
which satisfies that
\begin{equation}\label{h-definition00}
h'(0)=a_{n_0}\qquad \mbox{and} \qquad {\rm{Diam}}\, h(\mathbb B)={\rm{Diam}}\, f(\mathbb B)=d.
\end{equation}
Suppose that $d>0$. By the very definition,
$$\widehat{d}\big(h(\mathbb B)\big)\leq {\rm{diam}}\,h(\mathbb B)=d. $$
This together with Theorem \ref{slice diam-LT} implies
$$\frac{d}2=|a_{n_0}|=|h'(0)|\leq\frac12 \widehat{d}\big(h(\mathbb B)\big)\leq \frac{d}2.$$
Consequently,
$$ |h'(0)|=\frac12 \widehat{d}\big(h(\mathbb B)\big)=\frac{d}2.$$
It immediately follows from Theorem \ref{slice diam-LT} that
$$h(w)=h(0)+wh'(0),$$
which implies that $f$ is of the desired form. The proof is complete.
\end{proof}

\section{Geometric properties of octonionic slice regular functions}
\label{Geometric properties}

In this section, we use some ideas developed in the proof of Theorem \ref{BSL} to further investigate geometric properties of octonionic slice regular functions.

\subsection{The minimum principle and the open mapping theorem}

We begin with the following result,  a special case of which has been used in (\ref{desried result}).

\begin{proposition}\label{R-product wrt I-product}
Let $f$ and $g$ be two regular functions on  a symmetric slice domain $\Omega\subseteq \mathbb O$. Then the following two equalities hold:
\begin{equation}\label{inner-R-product}
\Big\langle I_w, \,f\ast g(w)\Big\rangle=\Big\langle I_w, \, f(w)g\big(f(w)^{-1}wf(w)\big)\Big\rangle,\qquad \forall\, w\in \Omega\setminus \mathcal{Z}_f,
\end{equation}
and
\begin{equation}\label{inner-R-reciprocal}
|f^{-\ast}(w)|=\frac 1{\big|f\big(f^c(w)^{-1}wf^c(w)\big)\big|}, \qquad \forall\, w\in \Omega\setminus \mathcal{Z}_{f^s}.
\end{equation}
\end{proposition}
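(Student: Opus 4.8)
The plan is to reduce both identities to the well-known pointwise--regular product formula on the quaternionic-like subalgebra generated by a single point, exploiting Artin's theorem (Theorem \ref{Artin-thm}). Fix $w \in \Omega$ and let $I_w = {\rm{Im}}\,w/|{\rm{Im}}\,w|$ (the real case being trivial, since there $f\ast g = fg$ pointwise and the conjugation $f(w)^{-1}wf(w) = w$). The key observation is that $w$, $f(w)$ and $g(f(w)^{-1}wf(w))$ all lie in the associative subalgebra $\mathbb A_w$ generated by $w$ and $f(w)$; inside $\mathbb A_w$, which is isomorphic to a subalgebra of $\mathbb H$, all the classical quaternionic manipulations are legitimate. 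The first step is therefore to recall (or re-derive via the splitting lemma and Remark \ref{remark on R-product01}) the identity $f\ast g(w) = f(w)\, g\big(f(w)^{-1}w f(w)\big)$ whenever $f(w)\neq 0$; this is the octonionic analogue of \cite[Proposition 3.6]{GSS} and holds because the computation only involves $w$ and $f(w)$. Taking $\langle I_w, \cdot\rangle$ of both sides gives \eqref{inner-R-product} immediately — indeed \eqref{inner-R-product} is a slightly weaker statement than the pointwise identity, but stating it with the inner product is exactly what is needed later and sidesteps any subtlety about whether the full identity holds when the relevant elements fail to associate.

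For \eqref{inner-R-reciprocal}, I would start from the definition $f^{-\ast}(w) = f^s(w)^{-1}f^c(w)$ in Definition \ref{def-regular reciprocal}. Since $f^s$ is slice preserving (Remark \ref{remark on con-sym02}), $f^s(w)$ is a scalar multiple of elements of $\mathbb C_{I_w}$ and in particular $|f^{-\ast}(w)| = |f^c(w)|/|f^s(w)|$. Now apply the pointwise product formula from the first step to the regular product $f^s = f^c \ast f$: this yields $f^s(w) = f^c(w)\, f\big(f^c(w)^{-1} w f^c(w)\big)$ on $\Omega\setminus\mathcal{Z}_{f^c}$, hence $|f^s(w)| = |f^c(w)|\cdot\big|f\big(f^c(w)^{-1}w f^c(w)\big)\big|$ by multiplicativity of the octonionic norm (Artin's theorem again). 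Dividing, $|f^{-\ast}(w)| = 1/\big|f\big(f^c(w)^{-1}w f^c(w)\big)\big|$, which is \eqref{inner-R-reciprocal}. One must check the domains match up: $\mathcal{Z}_{f^s} = \mathcal{Z}_{f^c\ast f}$ contains $\mathcal{Z}_{f^c}$ up to symmetrization, and on $\Omega\setminus\mathcal{Z}_{f^s}$ both $f^c(w)$ and $f\big(f^c(w)^{-1}wf^c(w)\big)$ are nonzero, so the formula is well posed; a brief argument using \cite[Corollary 25]{Ghiloni1} on zero sets handles the case where $f^c(w)=0$ but $w\notin\mathcal{Z}_{f^s}$ (which actually cannot happen, since $f^c(w)=0$ forces $f^s = f^c\ast f$ to vanish on $\mathbb S_w$).

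The main obstacle I anticipate is establishing the pointwise product identity $f\ast g(w) = f(w)\,g\big(f(w)^{-1}wf(w)\big)$ cleanly in the octonionic setting, because the usual quaternionic proof manipulates series term by term and one has to be careful that every product encountered stays inside $\mathbb A_w = \langle w, f(w)\rangle$. The safest route is: expand $f(w) = \sum_w^n a_n$ on a ball (or pass to a ball via the identity principle on a symmetric slice domain), observe that the partial sums of $f$ at $w$ all lie in $\langle w \rangle \subseteq \mathbb C_{I_w}$, whereas $f(w)^{-1}wf(w)$ lies in $\langle w, f(w)\rangle$; then the map $w \mapsto f(w)^{-1}wf(w)$ is the "rotation" sending the slice $\mathbb C_{I_w}$ to itself, and the classical computation of \cite[Proposition 3.6]{GSS} goes through verbatim inside the associative algebra $\mathbb A_w$. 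Alternatively — and this is cleaner — one proves \eqref{inner-R-product} directly: both sides are real-analytic in $w$ and agree on $\Omega\cap\mathbb R$ (where everything is pointwise multiplication), and one can then invoke an identity-principle argument for slice functions. I would present the subalgebra argument as the main line and relegate the domain/zero-set bookkeeping to a remark.
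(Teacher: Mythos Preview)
Your proposal rests on establishing the pointwise identity $f\ast g(w)=f(w)\,g\big(f(w)^{-1}wf(w)\big)$ in the octonionic setting, and this is a genuine gap: that identity fails over $\mathbb O$. This is precisely the \emph{camshaft effect} the paper cites from \cite{Ghiloni2}: an isolated zero of $f$ need not be a zero of $f\ast g$, which is impossible if the pointwise formula held. Your Artin argument breaks because the series computation does \emph{not} stay inside $\mathbb A_w=\langle w,f(w)\rangle$: the regular product $\sum_n w^n\big(\sum_k a_k b_{n-k}\big)$ involves the Taylor coefficients $a_k,b_{n-k}\in\mathbb O$, which are arbitrary octonions independent of $w$ and $f(w)$; each term $w^n(a_k b_{n-k})$ mixes three generically independent elements, and Artin's theorem only buys associativity for subalgebras generated by two. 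So the quaternionic proof of \cite[Proposition 3.6]{GSS} does \emph{not} go through verbatim. Your fallback identity-principle argument also fails: the two sides of \eqref{inner-R-product} are real-valued functions (not slice regular), $I_w$ is undefined on $\Omega\cap\mathbb R$, and there is no identity principle available for such objects.

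The paper's proof takes a completely different route. It writes $f(x+yI)=\alpha+I\beta$ and $g(x+yI)=\gamma+I\delta$ via the representation formula, then expands both sides of \eqref{inner-R-product} explicitly in terms of $\alpha,\beta,\gamma,\delta$ and $I_0$. The equality is established by repeated use of Lemma~\ref{unitary multipliers} (unitarity of left/right multiplication) and Lemma~\ref{associator} (orthogonality $\langle u,[u,v,w]\rangle=0$), which make the associator terms disappear \emph{after} pairing with $I_0$ --- this is why only the inner-product identity survives, not the full pointwise one. For \eqref{inner-R-reciprocal} the paper likewise does not invoke any product formula: it computes $|f^{-\ast}(w)|^2$ and $|f(\widetilde w)|^2$ directly in the $(\alpha,\beta)$ coordinates and verifies by hand that their product equals $1$. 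Your shortcut via $|f^s(w)|=|f^c(w)|\cdot|f(\widetilde w)|$ would require the norm version of the pointwise formula, which is not available either.
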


\begin{proof}
Let $D\subseteq \mathbb R^2$ be a domain such that $w=x+yI\in \Omega$ whenever  $(x, y)\in D$ and $I\in\mathbb S$. Since $f$ and $g$ are  regular on $\Omega$, it follows from \cite[Propositions 6 and 8]{Ghiloni1} that there exist four smooth functions $\alpha, \beta, \gamma, \delta: D\rightarrow \mathbb O$   with $\alpha(x, y)=\alpha(x, -y)$, $\beta(x, -y)=-\beta(x,  y)$,  $\gamma(x, y)=\gamma(x, -y)$ and $\delta(x, -y)=-\delta(x, y)$ such that
$$
\left\{
\begin{aligned}
\frac{\partial \alpha}{\partial x}-\frac{\partial\beta}{\partial y}=0\\
\frac{\partial \alpha}{\partial y}+\frac{\partial\beta}{\partial x}=0
\end{aligned}
\right.
;\qquad \qquad
\left\{
\begin{aligned}
\frac{\partial \gamma}{\partial x}-\frac{\partial\delta}{\partial y}=0\\
\frac{\partial \gamma}{\partial y}+\frac{\partial\delta}{\partial x}=0
\end{aligned}
\right.
$$
and
\begin{equation}\label{fg-defintion}
f(x+yI)=\alpha (x, y)+I\beta (x, y), \qquad g(x+yI)=\gamma (x, y)+I\delta (x, y)
\end{equation}
for all $(x, y)\in D$ and $I\in\mathbb S$. By Remark \ref{remark on R-product02}, the regular product $f\ast g$ is defined equivalently by
\begin{equation}\label{e-def-rp}
\begin{split}
(f\ast g)(x+yI)=&\Big(\alpha(x, y)\gamma(x, y)-\beta (x, y)\delta(x, y)\Big)\\
&+I\Big(\alpha (x, y)\delta(x, y)+\beta (x, y)\gamma(x, y)\Big).
\end{split}
\end{equation}

Fix an arbitrary point $w=x_0+y_0I_0$ with $(x_0, y_0)\in D$ and $I_0\in\mathbb S$.
In the remaining argument, we will drop the coordinates $(x_0, y_0)$ from $\alpha(x_0, y_0)$, $\beta(x_0, y_0)$, $\gamma(x_0, y_0)$ and $\delta(x_0, y_0)$ for the sake of simplicity. Assume that $f(w)=\alpha  +I_0\beta\neq 0$. From Artin's theorem for alternative algebras (Theorem \ref{Artin-thm}), we know that
$$[f(w), \,I_0, \,f(w)^{-1}]=0$$
and hence $\big(f(w)I_0\big)f(w)^{-1}=f(w) \big(I_0f(w)^{-1}\big)$, which belongs to $\mathbb S$ and will be denoted by
$f(w)I_0f(w)^{-1}$ with no ambiguity. In view of (\ref{e-def-rp}),  and Lemmas \ref{unitary multipliers} and \ref{associator},
\begin{equation}\label{inner-R-product01}
\begin{split}
\Big\langle I_0, \,f\ast g(w)\Big\rangle
&=\big\langle I_0, \, \alpha \gamma+I_0(\beta\gamma)\big\rangle
+\big\langle I_0, \, I_0(\alpha \delta)-\beta\delta\big\rangle\\
&=\big\langle I_0, \, \alpha \gamma+(I_0\beta)\gamma\big\rangle
+\big\langle 1, \,  \alpha \delta\big\rangle- \big\langle I_0, \,\beta\delta\big\rangle\\
&=\big\langle I_0, \, (\alpha+I_0\beta)\gamma\big\rangle
+\big\langle 1, \,  \alpha \delta\big\rangle- \big\langle I_0, \,\beta\delta\big\rangle.
\end{split}
\end{equation}
Next we claim that
$$\Big\langle I_0, \, (\alpha+I_0\beta)\Big(\big((\alpha+I_0\beta)^{-1}I_0(\alpha+I_0\beta)\big)\delta\Big)\Big\rangle
=\big\langle 1, \,  \alpha \delta\big\rangle- \big\langle I_0, \,\beta\delta\big\rangle,$$
from which together with (\ref{inner-R-product01}) it will immediately follow   that
\begin{equation*}
\begin{split}
\Big\langle I_0, \,f\ast g(w)\Big\rangle
&=\Big\langle I_0, \, (\alpha+I_0\beta)\gamma\Big\rangle +\Big\langle I_0, \,
(\alpha+I_0\beta)\Big(\big((\alpha+I_0\beta)^{-1}I_0
(\alpha+I_0\beta)\big)\delta\Big)\Big\rangle
\\
&=\Big\langle I_0, \, f(w)\Big(\gamma+\big(f(w)^{-1}I_0
f(w)\big)\delta\Big)\Big\rangle
\\
&=\Big\langle I_0, \, f(w)g\big(f(w)^{-1}w
f(w)\big)\Big\rangle,
\end{split}
\end{equation*}
since $f(w)^{-1}wf(w)\in\mathbb S_w$ and
$g\big(f(w)^{-1}wf(w)\big)=\gamma+\big(f(w)^{-1}I_0f(w)\big)\delta$.

Thanks to (\ref{Real part free}), and Lemmas \ref{unitary multipliers} and \ref{associator}, the preceding claim can be proved as follows:
\begin{equation*}
\begin{split}
&\Big\langle I_0, \, (\alpha+I_0\beta)\Big(\big((\alpha+I_0\beta)^{-1}I(\alpha+I_0\beta)\big)\delta\Big)\Big\rangle
\\
=&\Big\langle \overline{(\alpha+I_0\beta)}I_0, \, \Big((\alpha+I_0\beta)^{-1}I_0(\alpha+I\beta)\Big)\delta \Big\rangle\\
=&|\alpha+I_0\beta|^2\Big\langle  (\alpha+I_0\beta)^{-1}I_0, \, \Big((\alpha+I_0\beta)^{-1}I_0(\alpha+I_0\beta)\Big)\delta \Big\rangle\\
=&|\alpha+I_0\beta|^2\Big\langle  (\alpha+I_0\beta)^{-1}I_0, \, \big[(\alpha+I_0\beta)^{-1}I_0,\, \alpha+I_0\beta, \, \delta\big]\\
&+\Big((\alpha+I_0\beta)^{-1}I_0\Big)\Big((\alpha+I_0\beta)\delta\Big) \Big\rangle\\
=&|\alpha+I_0\beta|^2\Big\langle  (\alpha+I_0\beta)^{-1}I_0, \,  \Big((\alpha+I_0\beta)^{-1}I_0\Big)\Big((\alpha+I_0\beta)\delta\Big) \Big\rangle\\
=&\big\langle 1, \, (\alpha+I_0\beta)\delta\big\rangle\\
=&\big\langle 1, \, \alpha\delta\big\rangle+\big\langle 1, \, (I_0\beta)\delta\big\rangle\\
=&\big\langle 1, \, \alpha\delta\big\rangle+\big\langle 1, \,I_0(\beta\delta)\big\rangle\\
=&\big\langle 1, \,  \alpha \delta\big\rangle- \big\langle I_0, \,\beta\delta\big\rangle.
\end{split}
\end{equation*}

Now the proof of equality (\ref{inner-R-product}) is complete and it remains to prove equality (\ref{inner-R-reciprocal}). To this end, we further assume that $f^s(w)\neq0$, i.e $w\in \Omega\setminus \mathcal{Z}_{f^s}$. By \cite[Corollary 19]{Ghiloni1},
\begin{equation}\label{zero-relation}
\mathcal{Z}_{f^s}=\bigcup_{u\in \mathcal{Z}_{f}}\mathbb S_u
=\bigcup_{u\in \mathcal{Z}_{f^c}}\mathbb S_u.
\end{equation}
Therefore, the restrictions $\left. f\right|_{\mathbb S_w}$ and $\left. f^c\right|_{\mathbb S_w}$ never vanish so that $f^c(w)^{-1}wf^c(w)$ makes sense and
\begin{equation}\label{w-tilde notation}
\widetilde{w}:=f^c(w)^{-1}wf^c(w)=x_0+y_0f^c(w)^{-1}I_0f^c(w)\in \mathbb S_w\subseteq \Omega\setminus \mathcal{Z}_{f^s}.
\end{equation}
Furthermore, it follows from (\ref{fg-defintion}) and Remark \ref{remark on con-sym01} that
$$f^c(x+yI)=\overline{\alpha} (x, y)+I\overline{\beta} (x, y)$$
 for all $(x, y)\in D$ and $I\in\mathbb S$, from which we deduce that
\begin{equation}\label{f-ast-notation}
\begin{split}
f^{-\ast}(w)
&=\frac 1{f^s(w)}f^c(w)\\
&=\Big(|\alpha|^2-|\beta|^2+2I_0\langle\alpha, \,\beta\rangle\Big)^{-1}
\big(\overline{\alpha} +I_0\overline{\beta}\,\big)
\end{split}
\end{equation}
so that
$$|f^{-\ast}(w)|^2=\frac{|\alpha|^2+|\beta|^2+2\langle\overline{\alpha}\beta, \,I_0\rangle}{\big(|\alpha|^2-|\beta|^2\big)^2+4\langle\alpha, \,\beta\rangle^2}.$$
Now to conclude the proof, it suffices to prove that
\begin{equation}\label{f(w-tilde)-notaion}
|f(\widetilde{w})|^2=\frac{\big(|\alpha|^2-|\beta|^2\big)^2+4\langle\alpha, \,\beta\rangle^2}{|\alpha|^2+|\beta|^2+2\langle\overline{\alpha}\beta, \,I_0\rangle}.
\end{equation}
Since
$$f(\widetilde{w})=\alpha+\big(f^c(w)^{-1}I_0f^c(w)\big)\beta
=\alpha+\Big(\big(\overline{\alpha} +I_0\overline{\beta}\,\big)^{-1}I_0
\big(\overline{\alpha} +I_0\overline{\beta}\,\big)\Big)\beta,$$
 it follows that
\begin{equation}\label{computation of norm}
\begin{split}
|f(\widetilde{w})|^2
&=|\alpha|^2+|\beta|^2
+2\Big\langle \alpha, \, \Big(\big(\overline{\alpha} +I_0\overline{\beta}\,\big)^{-1}I_0
\big(\overline{\alpha} +I_0\overline{\beta}\,\big)\Big)\beta\Big\rangle
\\
&=|\alpha|^2+|\beta|^2
+2|\alpha-\beta I_0|^{-2}\Big\langle \alpha, \, \Big(\big(\alpha-\beta I_0\big)I_0
\big(\overline{\alpha} +I_0\overline{\beta}\,\big)\Big)\beta\Big\rangle.
\end{split}
\end{equation}
We next claim that
$$\Big\langle \alpha, \, \Big(\big(\alpha-\beta I_0\big)I_0
\big(\overline{\alpha} +I_0\overline{\beta}\,\big)\Big)\beta\Big\rangle
=-\big(|\alpha|^2+|\beta|^2\big)\big\langle \overline{\alpha}\beta, \,I_0\big\rangle
+2\big\langle  \alpha, \, \beta\big\rangle^2-2|\alpha|^2|\beta|^2,$$
from which  (\ref{f(w-tilde)-notaion}) will immediately follow  and the proof will be concluded. A direct computation shows that the left-hand side of the preceding equality is exactly
\begin{equation}\label{computation of mixed term}
\begin{split}
&\Big\langle \alpha, \, \Big( \alpha I_0\overline{\alpha}+\beta I_0\overline{\beta}+ \beta\overline{\alpha}-\alpha\overline{\beta} \Big)\beta\Big\rangle
\\
=&\Big\langle \alpha, \, \alpha\big( (I_0\overline{\alpha})\beta \big)
+\big[\alpha, \,I_0\overline{\alpha}, \,\beta\big]\Big\rangle
+|\beta|^2\big\langle\alpha, \,\beta I_0\big\rangle
+\big\langle\alpha\overline{\beta}, \,\beta\overline{\alpha}\big\rangle-|\alpha|^2 |\beta|^2
\\
=&|\alpha|^2\big\langle 1, \,  \big(I_0\overline{\alpha}\big)\beta
\big\rangle+|\beta|^2\big\langle\overline{\beta}\alpha, \,I_0\big\rangle
+\Big\langle\alpha\overline{\beta}, \, 2\big\langle\alpha, \, \beta\big\rangle-\alpha\overline{\beta}\Big\rangle-|\alpha|^2 |\beta|^2
\\
=&|\alpha|^2\big\langle 1, \,I_0\big(\overline{\alpha}\beta\big)
\big\rangle-|\beta|^2\big\langle\overline{\alpha}\beta, \,I_0\big\rangle
+2\big\langle\alpha, \, \beta\big\rangle^2-2|\alpha|^2 |\beta|^2
\\
=&-\big(|\alpha|^2+|\beta|^2\big)\big\langle \overline{\alpha}\beta, \,I_0\big\rangle
+2\big\langle\alpha, \, \beta\big\rangle^2-2|\alpha|^2|\beta|^2,
\end{split}
\end{equation}
which is precisely the right-hand side as desired.
\end{proof}

Now we come to prove the following weak version of the minimum principle:
\begin{theorem}\label{WMinP}
Let $f:\Omega\rightarrow \mathbb O$ be a regular function on a symmetric slice domain $\Omega\subseteq\mathbb O$. If  $|f|$  attains a local minimum at some point $w_0\in \Omega\cap\mathbb R$, then either $f(w_0)=0$ or $f$ is constant.
\end{theorem}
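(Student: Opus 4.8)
The plan is to reduce the statement to the maximum principle (Theorem \ref{MP}) by passing to the regular reciprocal $f^{-\ast}$ (Definition \ref{def-regular reciprocal}) and exploiting the norm identity \eqref{inner-R-reciprocal} of Proposition \ref{R-product wrt I-product}. Assume $f(w_0)\neq0$; the goal is then to prove that $f$ is constant. First I would choose $\delta>0$ with $B(w_0,\delta)\subseteq\Omega$ and $|f(w)|\geq|f(w_0)|>0$ for all $w\in B(w_0,\delta)$, which the local minimum hypothesis permits. The assumption $w_0\in\mathbb R$ enters here in an essential way: for $w=x+yI\in B(w_0,\delta)$ every point of the sphere $\mathbb S_w=x+y\mathbb S$ lies at distance $\sqrt{(x-w_0)^2+y^2}=|w-w_0|<\delta$ from $w_0$, hence $\mathbb S_w\subseteq B(w_0,\delta)$. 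Therefore the spheres $\mathbb S_w$ with $w\in U:=B(w_0,\delta)$ avoid the zero set of $f$, so by \eqref{zero-relation} the symmetrization $f^s$, and a fortiori $f^c$, is zero-free on the symmetric slice domain $U$; hence $f^{-\ast}=(f^s)^{-1}f^c$ is regular on all of $U$.

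Next I would estimate $|f^{-\ast}|$ on $U$. For $w\in U$ set $\widetilde w:=f^c(w)^{-1}wf^c(w)$, an element of $\mathbb S_w\subseteq U$, so that $|f(\widetilde w)|\geq|f(w_0)|$; by \eqref{inner-R-reciprocal},
\[
|f^{-\ast}(w)|=\frac{1}{|f(\widetilde w)|}\leq\frac{1}{|f(w_0)|}=|f^{-\ast}(w_0)|,
\]
the last equality holding because $\widetilde{w_0}=w_0$ when $w_0$ is real. Thus $|f^{-\ast}|$ attains its maximum over $U$ at the interior point $w_0$; in particular, for any fixed $I\in\mathbb S$ the restriction $|f^{-\ast}_I|$ has a local maximum at $w_0\in U_I$. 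Applying Theorem \ref{MP} on the symmetric slice domain $U$ then forces $f^{-\ast}$ to be constant on $U$, say $f^{-\ast}\equiv c$ with $c=f^{-\ast}(w_0)\neq0$.

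Finally I would recover $f$ from the defining relation $f^{-\ast}\ast f=1$, valid on $U$. Writing $f(w)=\sum_{n\geq0}w^na_n$ on the ball $U$ and computing the $\ast$-product of the constant function $c$ with $f$ via Remark \ref{remark on R-product01} yields $c\ast f(w)=\sum_{n\geq0}w^n(ca_n)$, so $ca_0=1$ and $ca_n=0$ for every $n\geq1$. Since $\mathbb O$ is a division algebra and $c\neq0$, this gives $a_n=0$ for all $n\geq1$, i.e.\ $f$ equals the constant $a_0$ on $U$, and the identity principle propagates this to all of $\Omega$.

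The step I expect to be the most delicate is the first one — upgrading the local non-vanishing of $f$ near $w_0$ to the non-vanishing of $f^s$ on a \emph{symmetric} neighbourhood of $w_0$, which is exactly what makes $f^{-\ast}$ regular there and keeps the auxiliary point $\widetilde w$ from \eqref{inner-R-reciprocal} inside the region where $|f|\geq|f(w_0)|$. This is precisely where the hypothesis $w_0\in\mathbb R$ is indispensable: the spheres $\mathbb S_w$ through points close to a real point collapse toward $w_0$, whereas for a non-real $w_0$ they would sweep out a whole $6$-sphere on which no a priori lower bound for $|f|$ is available (the source of the additional obstacles mentioned in the introduction). Granting this, the remaining steps are routine, given Proposition \ref{R-product wrt I-product}, Theorem \ref{MP} and the power-series calculus for the $\ast$-product.
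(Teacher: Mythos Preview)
Your proof is correct and follows the same strategy as the paper's \emph{second} proof: assume $f(w_0)\neq 0$, pass to $f^{-\ast}$ via Proposition~\ref{R-product wrt I-product}, show that $|f^{-\ast}|$ has a local maximum at $w_0$, and invoke Theorem~\ref{MP}. (The paper also offers a first, purely variational proof that you do not reproduce.) Your execution of the key step is in fact a little cleaner than the paper's: to establish the local maximum of $|f^{-\ast}|$, the paper applies \eqref{inner-R-reciprocal} with $f$ replaced by $f^{-\ast}$, obtaining $|f(w)|=1/|f^{-\ast}(g(w))|$ with $g(w)=f(w)^{-1}wf(w)$, and then invokes the \emph{inverse function theorem} to show $g$ is a local diffeomorphism at $w_0$. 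You instead use \eqref{inner-R-reciprocal} directly and exploit the elementary observation that $\widetilde w\in\mathbb S_w\subseteq B(w_0,\delta)$ when $w_0\in\mathbb R$, which yields the bound with no differential-topological input.

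One small slip: in your last step you expand $f(w)=\sum_{n\geq 0}w^na_n$ ``on the ball $U$'', but the power series is centred at the origin and only guaranteed to converge on balls $B(0,R)$; when $w_0\neq 0$ you should first translate by the real number $w_0$ (which preserves slice regularity), as the paper does at the outset. Alternatively, once $f^{-\ast}\equiv c\neq 0$ on $U$ you can bypass power series entirely by observing that $f=(f^{-\ast})^{-\ast}\equiv c^{-1}$ there.
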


We give two proofs of the theorem, which seem useful for other purposes.
\begin{proof}[The first proof of Theorem $\ref{WMinP}$]
We use a variational argument similar to the proof of Theorem $\ref{BSL}$. Without loss of generality, we may assume that $0\in \Omega$ and $w_0=0$. Suppose by contradiction that $f(0)\neq0$ or $f$ is not constant. For each $\xi\in\partial \mathbb B$, we consider the function
$$\psi_{\xi}(t):=|f(t\xi)|^2$$
defined on some interval $(-\varepsilon, \varepsilon)$ with  $\varepsilon>0$ sufficiently small. By assumption, $\psi_{\xi}(0)$ is a minimum of $\psi_{\xi}$, and hence
$$2\big\langle f(0)\overline{f'(0)}, \,\xi\big\rangle=
2\big\langle f(0), \,\xi f'(0)\big\rangle=
2\left.\Big\langle f(t\xi), \,\frac{d}{dt}f(t\xi)\Big\rangle\right|_{t=0}
=\psi_{\xi}'(0)=0$$
for all $\xi\in\partial \mathbb B$. Therefore, $f(0)\overline{f'(0)}=0$, i.e. $f'(0)=0$.
Since $f$ is not constant, there must exist a positive integer $n_0\geq2$ such that
$f^{(n_0)}(0)\neq 0$, but $f'(0)=f''(0)=\ldots=f^{(n_0-1)}(0)=0$. Thus it holds that
\begin{equation}\label{varational}
\begin{split}
\psi_{\xi}(t)&=\Big|f(0)+t^{n_0}\xi^{n_0}f^{(n_0)}(0)/n_0!+o(t^{n_0})\Big|^2 \\
&=|f(0)|^2+2t^{n_0}\Big\langle f(0)\overline{f^{(n_0)}(0)}/n_0!, \,\xi^{n_0}\Big\rangle
+o(t^{n_0})
\end{split}
\end{equation}
as $t\rightarrow 0$. Then $$\psi_{\xi}'(0)=\psi_{\xi}''(0)=\ldots=\psi_{\xi}^{(n_0-1)}(0)=0$$ and
$$\psi_{\xi}^{(n_0)}(0)=2\Big\langle f(0)\overline{f^{(n_0)}(0)}, \,\xi^{n_0}\Big\rangle.$$
If $n_0$ is even, then the minimality of $\psi_{\xi} (0)$ implies that $\psi_{\xi}^{(n_0)}(0)\geq 0$, i.e.
$$\Big\langle f(0)\overline{f^{(n_0)}(0)}, \,\xi^{n_0}\Big\rangle\geq 0$$
for all  $\xi\in\partial \mathbb B$. This is possible only if $f^{(n_0)}(0)=0$, which is a contradiction. If $n_0$ is odd, then $\psi_{\xi} (0)=0$ for all  $\xi\in\partial \mathbb B$. This also implies that $f^{(n_0)}(0)=0$, giving a contradiction as well. The proof is complete.
\end{proof}

\begin{proof}[The second proof of Theorem $\ref{WMinP}$]
Without loss of generality, we may assume that $0\in \Omega$ and $w_0=0$. We further assume that $f(0)\neq0$ and use Proposition \ref{R-product wrt I-product} to deduce that $f(0)^{-1}$ is a local maximum of $f^{-\ast}$, and thus it follows from the maximum principle (Theorem \ref{MP}) that $f^{-\ast}$ is constant and so is $f$.

First in view of Remark \ref{remark on con-sym02}, the symmetrization $f^s$ is slice preserving, i.e. $f^s(w)$ and $w$ always lie in a same complex plane and thus commute.
Then by equality (\ref{inner-R-reciprocal}) with $f$ replaced by $f^{-\ast}$ and Artin's theorem for alternative algebras (Theorem \ref{Artin-thm}), the following equality
\begin{equation}\label{f-f-inverse}
|f(w)|=\frac 1{\big|f^{-\ast}\big(f(w)^{-1}wf(w)\big)\big|}
\end{equation}
holds for all $w\in \Omega\setminus \mathcal{Z}_{f^s}$. We next consider the real differential at the point $w_0=0$ of the function given by
$$g(w):=f(w)^{-1}wf(w)=|f(w)|^{-2}\overline{f(w)}wf(w)$$
on $\Omega\setminus \mathcal{Z}_{f^s}\ni0$. Since $|f|^{-2}$ attains a local maximum  $|f(0)|^{-2}$ at $0$, its directional derivative along every direction at $0$ always vanishes.  Now a simple calculation gives that for all $v\in\partial \mathbb B$,
$$\frac{\partial g}{\partial v}(0)=|f(0)|^{-2}\overline{f(0)}vf(0)=f(0)^{-1}vf(0)\neq 0.$$
This means exactly that the real differential $dg_0$ of $g$ at $0$ is invertible. Thus  in view of the inverse mapping theorem, $g$ is a differmorphism from $B(0, r_1)$ onto $B(0, r_2)$, where $r_1$ and $r_2$ are two small positive numbers. Therefore, $f(0)^{-1}$ is a local maximum of $f^{-\ast}$ in virtue of equality (\ref{f-f-inverse}), and the desired result immediately follows.
\end{proof}

\begin{remark}\label{Remark on WMP}
Now a fairly natural question arises of whether the restriction of $w_0$ belonging to $\Omega\cap\mathbb R$ in Theorem \ref{WMinP} is superfluous. The point in the second proof of Theorem $\ref{WMinP}$ is to prove that the real differential of $g$ at the point $w=w_0$, which is the minimum point of $|f|$ such that $f(w_0)\neq0$, is non-degenerate. In the general case that $w_0\in\Omega\setminus\mathbb R$, the author does not know whether  the preceding fact necessarily holds,  and merely know that the rank of the real differential of $g$ at the point $w=w_0$ is greater than or equal to 4. If this were the case, the general minimum principle would immediately follow (Another possible approach to the general minimum principle is an  argument analogous to that in the first proof of Theorem $\ref{WMinP}$, by means of the so-called spherical power series expansion \cite[Theorem 5.4]{Ghiloni3} for octonionic slice regular functions), and in turn would imply the open mapping theorem analogous to \cite[Theorem 7.7]{GSS}. However, we have the following result, which corresponds to \cite[Theorem 7.4]{GSS}.
\end{remark}

\begin{theorem}\label{Weak Open mapping}
Let $f:\Omega\rightarrow \mathbb O$ be a nonconstant regular function on a symmetric slice domain $\Omega\subseteq \mathbb O$. If $U$ is a symmetric open subset of $\Omega$, then $f(U)$ is open. In particular, $f(\Omega)$ is open.
\end{theorem}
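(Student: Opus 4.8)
The plan is to show that every value $q_0:=f(p_0)$, $p_0\in U$, is an interior point of $f(U)$; the case $U=\Omega$ then gives the last assertion. The guiding idea is to avoid working with $f$ itself, because the octonionic minimum modulus principle is unavailable off the real axis — precisely the content of Theorem \ref{WMinP} and Remark \ref{Remark on WMP} — and to work instead with the symmetrization $(f-q)^s$ for $q$ near $q_0$. Being a symmetrization it is slice preserving (Remark \ref{remark on con-sym02}), so its restriction to any slice $\mathbb C_I$ is an \emph{ordinary} $\mathbb C_I$-valued holomorphic function of one complex variable, for which the classical maximum (hence minimum) modulus principle is available. The symmetry of $U$ will be used only at the end, to transfer a zero of this slice function back to a genuine solution of $f(w)=q$ inside $U$ via the zero-set identity $\mathcal{Z}_{h^s}=\bigcup_{u\in\mathcal{Z}_h}\mathbb S_u$ of \cite[Corollary 19]{Ghiloni1} (recorded as (\ref{zero-relation})).

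First I would fix $p_0\in U$, set $q_0:=f(p_0)$ and $g:=f-q_0$, which is regular and nonconstant on $\Omega$ with $g(p_0)=0$, and choose $I_0\in\mathbb S$ with $p_0\in\mathbb C_{I_0}$. By (\ref{zero-relation}) we get $g^s(p_0)=0$, while $g^s\not\equiv0$ since $g\not\equiv0$ (cf. (\ref{sym-def})); hence $g^s_{I_0}:\Omega_{I_0}\to\mathbb C_{I_0}$ is a not-identically-zero holomorphic function, so $p_0$ is an isolated zero of $g^s_{I_0}$. As $U$ is open and axially symmetric, I pick $\varepsilon>0$ with $\{w\in\mathbb O:\mathrm{dist}(w,\mathbb S_{p_0})<\varepsilon\}\subseteq U$ and then $\rho\in(0,\varepsilon)$ so small that the closed disc $\overline{\Delta}:=\{z\in\mathbb C_{I_0}:|z-p_0|\le\rho\}$ lies in $\Omega_{I_0}$ and carries no zero of $g^s_{I_0}$ other than $p_0$; put $m:=\min_{|z-p_0|=\rho}|g^s_{I_0}(z)|>0$. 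A reverse–triangle–inequality estimate gives $\mathrm{dist}(w,\mathbb S_{p_0})\le|z-p_0|$ for every $w\in\mathbb S_z$, so $\mathbb S_z\subseteq U$ whenever $z\in\overline{\Delta}$.

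Next I would use that $q\mapsto(f-q)^s$ is continuous — indeed real analytic, as an explicit slice description such as (\ref{e-def-rp}) makes clear — from $\mathbb O$ into the regular functions on $\Omega$ with the topology of uniform convergence on compact sets, and equals $g^s$ at $q=q_0$. Hence there is $\delta>0$ such that $|q-q_0|<\delta$ forces both $|(f-q)^s(p_0)|<m/2$ and $\min_{|z-p_0|=\rho}|(f-q)^s_{I_0}(z)|>m/2$. Fix such a $q$ and suppose, for contradiction, $q\notin f(U)$. Then $(f-q)^s_{I_0}$ has no zero on $\overline{\Delta}$: a zero $z_1\in\overline{\Delta}$ would, by (\ref{zero-relation}), lie on $\mathbb S_u$ for some $u\in\mathcal{Z}_{f-q}$, and then $u\in\mathbb S_{z_1}\subseteq U$ would satisfy $f(u)=q$, contradicting $q\notin f(U)$. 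Thus $1/(f-q)^s_{I_0}$ is holomorphic on $\overline{\Delta}$, and the maximum modulus principle yields
$$|(f-q)^s(p_0)|\;\ge\;\min_{|z-p_0|=\rho}|(f-q)^s_{I_0}(z)|\;>\;\frac{m}{2},$$
contradicting $|(f-q)^s(p_0)|<m/2$. (Rouché's theorem on $\Delta$ would serve equally well.) Hence $\{q\in\mathbb O:|q-q_0|<\delta\}\subseteq f(U)$, and since $p_0\in U$ was arbitrary, $f(U)$ is open; taking $U=\Omega$ gives the last statement.

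The place where the classical quaternionic argument of \cite[Theorem 7.4]{GSS} cannot be copied — and so the main obstacle — is exactly this step: one would like to say directly that if $f-q$ has no zero near $\mathbb S_{p_0}$ then $|f-q|$ attains a boundary minimum there, but the octonionic minimum modulus principle only holds at real points (Theorem \ref{WMinP}). Passing to $(f-q)^s$ circumvents this, at the price of the symmetry assumption on $U$, which is precisely what makes (\ref{zero-relation}) turn a slice zero of $(f-q)^s_{I_0}$ into an honest $w\in U$ with $f(w)=q$; dropping the symmetry of $U$ would require the full minimum principle discussed in Remark \ref{Remark on WMP}. The remaining ingredients — the identity principle (used implicitly for $g^s_{I_0}\not\equiv0$), isolation of zeros of one-variable holomorphic functions, continuous dependence of the symmetrization on $q$, and the elementary distance estimate on $\mathbb S_z$ — are routine and I would not dwell on them.
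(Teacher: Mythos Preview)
Your proof is correct and follows the same overall strategy as the paper: pass to the slice-preserving symmetrization $(f-q)^s$, restrict to a single slice to obtain an ordinary one-variable holomorphic function, and use \cite[Corollary 19]{Ghiloni1} together with the axial symmetry of $U$ to transfer a slice zero back to a genuine solution $f(u)=q$ with $u\in U$. The difference is only in the final complex-analytic step. The paper packages this as an argument principle (Proposition \ref{the arg-principle}): it introduces the logarithmic derivative $\mathcal{L}_{f-\omega}=(f-\omega)^{s\,\prime}/(f-\omega)^s$, observes that the zero-counting integral over $\partial V_{\alpha\delta}\cap\mathbb C_I$ is a continuous integer-valued function of $\omega$, hence constant, and positive at $\omega_0$. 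You instead argue directly by the minimum modulus principle on $(f-q)^s_{I_0}$ (equivalently, Rouch\'e on the disc $\Delta$). Both are standard routes to the classical open mapping theorem; yours is slightly more elementary and avoids introducing $\mathcal{L}_f$, while the paper's formulation isolates a reusable ``argument principle'' statement. The geometric setups also differ cosmetically: the paper works with the full symmetric neighbourhood $V_{\alpha\delta}=\{w:\mathrm{dist}(w,\mathbb S_\alpha)<\delta\}$, whereas you work with a disc in one slice and the containment $\mathbb S_z\subseteq U$---but these amount to the same thing.
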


For each $\alpha\in\mathbb O$ and each $\delta>0$, we denote by  $V_{\alpha\delta}$ the symmetric open subset of $\mathbb O$ given by
$$V_{\alpha\delta}:=\big\{w\in \mathbb O: d(w, \mathbb S_{\alpha})<\delta \big\},$$
where $d(\cdot, \cdot)$ is the Euclidean distance function on $\mathbb O$.
For each non-identically vanishing regular function $f$ on a symmetric slice domain $\Omega\subseteq \mathbb O$, we define $\mathcal{L}_f$ to be the slice preserving function on $\Omega\setminus \mathcal{Z}_{f^s}$  given by
$$\mathcal{L}_f(w)=\frac{(f^s)'(w)}{f^s(w)},$$
which plays the role of the logarithmic derivatives of holomorphic functions of one complex variable.
Before presenting a proof of Theorem \ref{Weak Open mapping}, we first prove the following
\begin{proposition}\label{the arg-principle}
Let $f:\Omega\rightarrow \mathbb O$ be a non-identically vanishing regular function on a symmetric slice domain $\Omega\subseteq \mathbb O$. Let $\alpha \in\Omega$ and $\delta>0$ be such that $V_{\alpha\delta}\subset\subset\Omega$ and $f^s$ never vanishes on $\partial V_{\alpha\delta}$.
\begin{enumerate}
  \item [(i)] If $\alpha$ is  a zero of $f$, then the value of the following integral $$\frac1{2\pi I}\int_{\partial V_{\alpha\delta}\cap\mathbb C_I}\mathcal{L}_f(z)dz$$
 is a positive integer depending only on $\alpha, \,\delta$ and independent of $I\in\mathbb S$;
   \item [(ii)] If
 $$\frac1{2\pi I}\int_{\partial V_{\alpha\delta}\cap\mathbb C_I}\mathcal{L}_f(z)dz>0,$$
 then $f$ must have a zero on $V_{\alpha\delta}$.
\end{enumerate}
\end{proposition}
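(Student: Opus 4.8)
The plan is to transport the statement into a single plane $\mathbb C_I$ and apply the classical argument principle to the holomorphic restriction $(f^s)_I$. The key preliminary observations are: by Remark \ref{remark on con-sym02} the symmetrization $f^s$ is slice preserving, so for every $I\in\mathbb S$ its restriction $(f^s)_I$ maps $\Omega_I$ holomorphically into $\mathbb C_I$, and, directly from the definition of the slice derivative together with the Cauchy--Riemann equations in $\mathbb C_I$, the restriction of $(f^s)'$ to $\mathbb C_I$ is exactly the complex derivative of $(f^s)_I$. Hence $\mathcal L_f=(f^s)'/f^s$ restricts on $\Omega_I\setminus\mathcal Z_{(f^s)_I}$ to the classical logarithmic derivative $(f^s)_I'/(f^s)_I$. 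Since $V_{\alpha\delta}$ is axially symmetric, one checks directly that $\partial V_{\alpha\delta}\cap\mathbb C_I$ is the boundary, taken inside $\mathbb C_I$, of the bounded planar open set $V_{\alpha\delta}\cap\mathbb C_I$, and the hypothesis that $f^s$ is zero-free on $\partial V_{\alpha\delta}$ says precisely that $(f^s)_I$ is zero-free on this curve; as $V_{\alpha\delta}\subset\subset\Omega$, the function $(f^s)_I$ is moreover holomorphic near the closure. The classical argument principle then yields
$$\frac1{2\pi I}\int_{\partial V_{\alpha\delta}\cap\mathbb C_I}\mathcal L_f(z)\,dz=N_I,$$
where $N_I$ is the number of zeros of $(f^s)_I$ in $V_{\alpha\delta}\cap\mathbb C_I$ counted with multiplicity; in particular the left-hand side is a non-negative integer.

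For part (i): since $\alpha\in\mathcal Z_f$, equality \eqref{zero-relation} (i.e.\ \cite[Corollary 19]{Ghiloni1}) gives $\mathbb S_\alpha\subseteq\mathcal Z_{f^s}$, and since $d(\alpha,\mathbb S_\alpha)=0<\delta$ and $V_{\alpha\delta}$ is axially symmetric we get $\mathbb S_\alpha\subseteq V_{\alpha\delta}$; thus $V_{\alpha\delta}\cap\mathbb C_I$ contains a zero of $(f^s)_I$ and $N_I\ge 1$. To see that $N_I$ is independent of $I$, I would invoke the representation formula \cite[Proposition 6]{Ghiloni1}: because $f^s$ is slice preserving it can be written as $f^s(x+yJ)=a(x,y)+J\,b(x,y)$ with real-valued functions $a,b$ not depending on $J$, so $x+yI$ is a zero of $(f^s)_I$ precisely when $(x,y)$ is a common zero of $a$ and $b$, a condition free of $I$, while the set $\{(x,y):x+yI\in V_{\alpha\delta}\}$ equals $\{(x,y):(x-x_0)^2+(|y|-y_0)^2<\delta^2\}$ (writing $\alpha=x_0+y_0I_0$), again free of $I$. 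The multiplicity of such a zero is also independent of $I$: for a non-real zero, in the coordinate $\zeta=x+Jy$ of $\mathbb C_J$ the germ of $(f^s)_J$ at $x_0+y_0J$ is obtained from the single holomorphic germ $a+\mathrm i\,b$ by the isomorphism $\mathbb C\to\mathbb C_J$, $\mathrm i\mapsto J$, so its vanishing order is the same for all $J$; for a real zero $x_0$ the coefficients of $(f^s)_I(z)=\sum_k c_k(z-x_0)^k$ are the real numbers $(f^s)^{(k)}(x_0)/k!$, independent of $I$. Hence $N_I$ has a common value, depending only on $\alpha$ and $\delta$, which is the asserted positive integer.

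For part (ii): the argument-principle identity above shows that the hypothesis forces $N_I\ge 1$, so $(f^s)_I$ (and therefore $f^s$) has a zero $z_0\in V_{\alpha\delta}\cap\mathbb C_I$. Applying \eqref{zero-relation} once more, $z_0\in\mathbb S_u$ for some $u\in\mathcal Z_f$; since $\mathbb S_u=\mathbb S_{z_0}$ and $V_{\alpha\delta}$ is axially symmetric, $u\in\mathbb S_{z_0}\subseteq V_{\alpha\delta}$, so $f$ has the zero $u$ in $V_{\alpha\delta}$, as required.

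I expect the main obstacle to lie in part (i), in proving that both the zero set of $(f^s)_I$ inside $V_{\alpha\delta}\cap\mathbb C_I$ and each of its multiplicities are unaffected by the choice of $I$; this is exactly where the slice-preserving property of $f^s$ and the representation formula are indispensable. By contrast, the reduction of the octonionic integral to the complex argument principle, and the passage between zeros of $f^s$ and zeros of $f$ via \eqref{zero-relation}, are routine.
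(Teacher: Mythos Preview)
Your argument is correct and follows exactly the approach the paper takes: reduce to the classical argument principle for the slice-preserving function $f^s$ on a single complex plane $\mathbb C_I$ (Remark~\ref{remark on con-sym02}), and pass between zeros of $f^s$ and zeros of $f$ via \cite[Corollary 19]{Ghiloni1} (equality~\eqref{zero-relation}). The paper's own proof is a one-line reference to precisely these two ingredients; your version simply spells out the details---in particular the independence of $N_I$ from $I$ via the real-valued stem functions $a,b$---that the paper leaves implicit.
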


\begin{proof}
The result follows immediately from the complex argument principle applied to the slice  preserving function $f^s$ (see Remark \ref{remark on con-sym02}), together with \cite[Corollary 19]{Ghiloni1}.
\end{proof}

Now we come to prove Theorem \ref{Weak Open mapping}.

\begin{proof}[Proof of Theorem $\ref{Weak Open mapping}$]
Let $U\subseteq\Omega$ be as described. Fix an arbitrary point $\omega_0\in f(U)$. Choose one point $\alpha\in U$ with $f(\alpha)=\omega_0$, so that $f-\omega_0$ has a zero on $\mathbb S_{\alpha}\subseteq U$. Since $f$ is nonconstant, we may choose a $\delta>0$ such that $V_{\alpha\delta}\subset\subset U$ and $f(w)-\omega_0\neq 0$ for all $w\in  \overline{V}_{\alpha\delta}\setminus\mathbb S_{\alpha}$. Let $\varepsilon>0$ be such that
$$\min_{w\in \partial V_{\alpha\delta}}|f(w)-\omega_0|\geq 2\varepsilon.$$
For each $\omega\in B(\omega_0, \varepsilon)$, we have
$$\min_{w\in \partial V_{\alpha\delta}}|f(w)-\omega|\geq \min_{w\in \partial V_{\alpha\delta}}|f(w)-\omega_0|-|\omega-\omega_0|>\varepsilon. $$
This together with \cite[Corollary 19]{Ghiloni1} implies that for each $\omega\in B(\omega_0, \varepsilon)$, the symmetrization $(f-\omega)^s$ of the regular function $f-\omega$ never vanishes on the boundary $\partial V_{\alpha\delta}$ so that the following integral
$$\frac1{2\pi I}\int_{\partial V_{\alpha\delta}\cap\mathbb C_I}\mathcal{L}_{f-\omega}(z)dz$$
 is well-defined and thus determines a function of $\omega\in B(\omega_0, \varepsilon)$, depending only on $\alpha,\, \delta$ and independent of $I\in\mathbb S$. This function is obviously continuous and takes values in $\mathbb N$ by Proposition \ref{the arg-principle} (i), and hence equals identically to a positive integer, since
$$\frac1{2\pi I}\int_{\partial V_{\alpha\delta}\cap\mathbb C_I}\mathcal{L}_{f-\omega_0}(z)dz\geq 1.$$
Thus it follows from Proposition \ref{the arg-principle} (ii) that for each $\omega\in B(\omega_0, \varepsilon)$, $f-\omega$ must have a zero on $V_{\alpha\delta}$. In other words, $B(\omega_0, \varepsilon)\subseteq f(V_{\alpha\delta})\subseteq f(U)$. Since $\omega_0\in f(U)$ is arbitrarily chosen, we conclude the proof.
\end{proof}

\begin{remark}
(i) It is easy to see that Theorem \ref{Weak Open mapping} also implies  Theorem  \ref{WMinP}.

(ii) Thanks to Theorem \ref{Weak Open mapping} together with the standard slice technique, one can also prove the octonionic versions of the classical Carath\'{e}odory  and Borel-Carath\'{e}odory theorem with an approach different from and simpler than the one given in \cite{RW0}, and in turn the Bohr theorem. We leave the details to the interested reader.

\end{remark}

\subsection{The growth, distortion and covering theorems}

\begin{theorem}\label{Growth and Distortion Theorems}
Let $f$ be a  regular function on  $\mathbb B$ such that its restriction $f_I$ to $\mathbb B_I$ is injective and $f(\mathbb B_I)\subseteq \mathbb C_I $ for some $I\in \mathbb S $. If $f(0)=0$ and $f'(0)=1$, then for all  $ w\in \mathbb B$, the following inequalities hold:
\begin{eqnarray}\label{eq:111}
\qquad\ \frac{|w|}{(1+|w|)^2}\leq |f(w)|\leq \frac{|w|}{(1-|w|)^2};
\end{eqnarray}
\begin{eqnarray}\label{eq:112}
\qquad\ \frac{1-|w|}{(1+|w|)^3}\leq |f'(w)|\leq \frac{1+|w|}{(1-|w|)^3} ;
\end{eqnarray}
\begin{eqnarray}\label{eq:113}
\qquad\ \frac{1-|w|}{1+|w|}\leq \big|wf'(w)\ast f^{-\ast}(w)\big|\leq \frac{1+|w|}{1-|w|}.
\end{eqnarray}

Moreover, equality holds for one of these six inequalities at some point $w_0\in \mathbb B\setminus \{0\}$ if and only if $f$ is of the form $$f(w)=w(1-we^{I\theta})^{-\ast2} $$
with some $ \theta \in \mathbb R$.
\end{theorem}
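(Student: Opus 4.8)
The plan is to reduce the six inequalities to the classical growth and distortion theorems for normalized univalent holomorphic functions on the unit disc, applied to the single slice $f_I$, and then to transport these slice estimates to the whole of $\mathbb B$ by means of the convex combination identity (Proposition \ref{convex combination identity}). For the first step, we identify $\mathbb B_I$ with the unit disc and $\mathbb C_I$ with $\mathbb C$; by hypothesis $f_I\colon\mathbb B_I\to\mathbb C_I$ is a univalent holomorphic map with $f_I(0)=0$ and $f_I'(0)=1$, so the classical growth and distortion theorems give, for every $z\in\mathbb B_I$ with $|z|=r$,
\[
\frac{r}{(1+r)^2}\le|f_I(z)|\le\frac{r}{(1-r)^2},\qquad
\frac{1-r}{(1+r)^3}\le|f_I'(z)|\le\frac{1+r}{(1-r)^3},
\]
together with the standard consequence $\frac{1-r}{1+r}\le\big|zf_I'(z)/f_I(z)\big|\le\frac{1+r}{1-r}$ (which follows, e.g., from the disc inequality $\big|zf_I'(z)/f_I(z)-(1+r^2)/(1-r^2)\big|\le 2r/(1-r^2)$); moreover, for each of these six bounds, equality at a single point of $\mathbb B_I\setminus\{0\}$ forces $f_I$ to be a rotation of the Koebe function, i.e. $f_I(z)=z(1-ze^{I\theta})^{-2}$ for some $\theta\in\mathbb R$.

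To transport these estimates, set $g:=wf'(w)\ast f^{-\ast}(w)$; since the only zero of $f^s$ is $w=0$ (where $f'\ast f^{-\ast}$ has at worst a simple pole, cancelled by the factor $w$), $g$ extends to a regular function on $\mathbb B$. Each of $f$, $f'$, $g$ maps $\mathbb B_I$ into $\mathbb C_I$: for $f$ this is the hypothesis, for $f'$ because $f'|_{\mathbb B_I}=f_I'$, and for $g$ by Artin's theorem (Theorem \ref{Artin-thm}) together with Remark \ref{remark on R-product03}, which also identify $g|_{\mathbb B_I}$ with the pointwise quotient $zf_I'(z)/f_I(z)$ (here one uses that the splitting of $f$ on $\mathbb B_I$ reduces to $f_I=F_1$, so $f^{-\ast}|_{\mathbb B_I}$ is the pointwise reciprocal of $f_I$). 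Hence Proposition \ref{convex combination identity} applies to each of $f$, $f'$ and $g$: for $w=x+yJ$ with $r:=|w|$ and $z_{\pm}:=x\pm yI\in\mathbb B_I$, so $|z_{\pm}|=r$, one has
\[
|F(w)|^2=\frac{1+\langle I,J\rangle}{2}\,|F(z_+)|^2+\frac{1-\langle I,J\rangle}{2}\,|F(z_-)|^2
\]
for each $F\in\{f,f',g\}$. Since by the slice estimates each of $|F(z_+)|$ and $|F(z_-)|$ lies between the two classical bounds at radius $r$, and a convex combination lies between its extreme values, the six inequalities (\ref{eq:111})--(\ref{eq:113}) follow at once (the case $w=0$ of (\ref{eq:113}) being the trivial identity $1\le 1\le 1$).

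For the equality case, suppose equality holds in one of the six inequalities at some $w_0=x_0+y_0J\in\mathbb B\setminus\{0\}$, and set $r_0:=|w_0|$, $z_{\pm}:=x_0\pm y_0I$. In the convex combination identity above the values $|F(z_{\pm})|^2$ lie on one fixed side of the bound being attained, so if $J\ne\pm I$ — when both coefficients $\frac{1\pm\langle I,J\rangle}{2}$ are strictly positive — both $|F(z_+)|$ and $|F(z_-)|$ must be extremal, while if $J=\pm I$ then $w_0\in\mathbb B_I\setminus\{0\}$ to begin with. In either case $f_I$ attains a classical extreme bound at a point of $\mathbb B_I\setminus\{0\}$, so $f_I(z)=z(1-ze^{I\theta})^{-2}$ for some $\theta\in\mathbb R$ by the rigidity recalled above. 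The regular function $w(1-we^{I\theta})^{-\ast2}$ is defined on all of $\mathbb B$, because the symmetrization of $1-we^{I\theta}$ equals $1-2w\cos\theta+w^2=(w-e^{I\theta})\ast(w-e^{-I\theta})$, whose zeros lie on $\partial\mathbb B$; and its restriction to $\mathbb B_I$ equals $z(1-ze^{I\theta})^{-2}=f_I(z)$ by Artin's theorem. Thus $f$ and $w(1-we^{I\theta})^{-\ast2}$ are regular on the symmetric slice domain $\mathbb B$ and agree on $\mathbb B_I$, so they coincide on $\mathbb B$ by the identity principle. Conversely, if $f(w)=w(1-we^{I\theta})^{-\ast2}$ then $f_I$ is the univalent Koebe rotation, $f(0)=0$, $f'(0)=1$, and equality holds, e.g., in the right-hand inequality of (\ref{eq:111}) at $w_0=re^{-I\theta}\in\mathbb B_I$; so all the hypotheses and the equality conclusion are indeed realized.

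The most delicate point will be the identification made in the second step: that $f$, $f'$ and especially $g=wf'\ast f^{-\ast}$ are $\mathbb C_I$-valued on the slice $\mathbb B_I$ with restrictions exactly $f_I$, $f_I'$ and $zf_I'/f_I$. This is what makes Proposition \ref{convex combination identity} applicable and lets the classical one-variable theorems do all the work; everything else is bookkeeping with Artin's theorem and the identity principle.
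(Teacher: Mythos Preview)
Your proposal is correct and follows essentially the same approach the paper indicates: the paper explicitly says the proof is carried out as in \cite[Theorem 3.5]{RW2} with Proposition \ref{convex combination identity} replacing \cite[Lemma 3.2]{RW2}, i.e., classical growth and distortion on the slice $\mathbb B_I$ transported to all of $\mathbb B$ via the convex combination identity. Your verification that $f$, $f'$ and $g=wf'\ast f^{-\ast}$ all map $\mathbb B_I$ into $\mathbb C_I$ with the expected slice restrictions, together with the equality analysis through the identity principle, supplies precisely the details the paper omits.
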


The proof of the preceding theorem is similar to the one in \cite[Theorem 3.5]{RW2}. The only difference is that we use Proposition \ref{convex combination identity}, instead of \cite[Lemma 3.2]{RW2}. So we omit the details.

Now we  digress to the Koebe one-quarter theorem (Theorem \ref{th:Koebe-theorem}) for octonionic  regular functions  on the open unit ball $\mathbb B\subset \mathbb O$. We begin with the following simple result.

\begin{proposition}\label{open}
Let $\Omega\subseteq \mathbb O$ be a bounded domain and $f:\Omega\rightarrow \mathbb O$  a continuous function such that  $ f(\Omega)$ is open in $\mathbb O$. Let  $\alpha\in \Omega$ be a point such that
\begin{equation}\label{size-condition}
\rho:=\liminf_{w\rightarrow\partial \Omega}|f(w)-f(\alpha)|>0.
\end{equation}
Then $ B(f(\alpha), \rho)\subseteq f(\Omega)$.
\end{proposition}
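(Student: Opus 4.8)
The statement is purely topological and independent of the algebraic structure of $\mathbb O$; the plan is to run the standard connectedness argument behind the classical open mapping theorem, paying attention only to the passage from a boundary point of $f(\Omega)$ back to a sequence in $\Omega$ escaping to $\partial\Omega$. First I would normalise: replacing $f$ by $f-f(\alpha)$ (still continuous, still with open image, since a translate of an open set is open), it suffices to prove that $B(0,\rho)\subseteq f(\Omega)$ in the case $f(\alpha)=0$. I would then argue by contradiction, assuming there is a point $\omega_0\in B(0,\rho)\setminus f(\Omega)$, and look at the real segment $t\mapsto t\omega_0$, $t\in[0,1]$, which starts at $0=f(\alpha)\in f(\Omega)$ and ends at $\omega_0\notin f(\Omega)$.

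The next step is to isolate the ``exit point'' of the segment. Set $t^\ast:=\sup\{t\in[0,1]:t\omega_0\in f(\Omega)\}$. Since $f(\Omega)$ is open and $0\in f(\Omega)$, one has $t^\ast>0$; since $\omega_0\notin f(\Omega)$ and $f(\Omega)$ is open, the supremum is not attained, so that $t^\ast\omega_0\notin f(\Omega)$ while $t^\ast\omega_0=\lim_n t_n\omega_0$ for suitable $t_n\uparrow t^\ast$ with $t_n\omega_0\in f(\Omega)$. Hence $t^\ast\omega_0\in\overline{f(\Omega)}\setminus f(\Omega)$, and I may choose a sequence $w_n\in\Omega$ with $f(w_n)\to t^\ast\omega_0$.

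The heart of the matter is the compactness step, and this is where the boundedness of $\Omega$ is used. Since $(w_n)\subset\Omega$ and $\Omega$ is bounded, after passing to a subsequence we may assume $w_n\to w_\ast\in\overline\Omega$. If $w_\ast\in\Omega$, continuity of $f$ would force $f(w_\ast)=t^\ast\omega_0\in f(\Omega)$, contradicting the previous paragraph; therefore $w_\ast\in\partial\Omega$, so $d(w_n,\partial\Omega)\to0$. By the very definition of $\rho$ as a $\liminf$ over $w\to\partial\Omega$, this yields
$$\rho\le\liminf_{n\to\infty}|f(w_n)-f(\alpha)|=|t^\ast\omega_0|=t^\ast|\omega_0|\le|\omega_0|<\rho,$$
which is absurd. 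This contradiction shows that no such $\omega_0$ exists, i.e. $B(0,\rho)\subseteq f(\Omega)$, and hence $B(f(\alpha),\rho)\subseteq f(\Omega)$ in general.

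The only genuinely delicate point, and the one I would be most careful about, is the middle assertion that a boundary point of the open set $f(\Omega)$ is the limit of images of points tending to $\partial\Omega$: a priori the preimages $w_n$ could wander through $\Omega$ without approaching the boundary, and it is precisely the hypothesis that $\Omega$ is bounded (so that bounded sequences in it subconverge in $\overline\Omega$, with the limit forced into $\partial\Omega$) that rules this out. Everything else is elementary point-set topology.
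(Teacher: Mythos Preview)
Your proof is correct and follows essentially the same approach as the paper: both arguments show that any point of $\partial f(\Omega)$ is the limit of $f(w_n)$ with $w_n$ subconverging (by boundedness of $\Omega$) to some $w_\ast\in\overline\Omega$, rule out $w_\ast\in\Omega$ by openness of $f(\Omega)$, and then invoke the $\liminf$ hypothesis. The only cosmetic difference is that the paper proves directly that $\partial f(\Omega)\cap B(f(\alpha),\rho)=\emptyset$ and concludes by connectedness of the ball, whereas you make this connectedness step explicit via the segment exit-point construction.
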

\begin{proof}
For each point $\omega$ on the boundary $\partial f(\Omega)$ of $f(\Omega)$, there is a sequence $\{w_n\}_{n=1}^{\infty}$ in $\Omega$ such that
$\lim_{n\rightarrow\infty}f(w_n)=\omega$. Since $\overline{\Omega}$ is compact, we may assume that $\{w_n\}_{n=1}^{\infty}$ converges to a point, say $w_{\infty}\in \overline{\Omega}$. If $w_{\infty}\in\Omega$, then, by the continuity  of $f$, $\omega=f(w_{\infty})\in f(\Omega)$, which  contradicts the openness of $f(\Omega)$. Therefore, $w_{\infty}\in \partial \Omega$. This together with $(\ref{size-condition})$ implies  that
$$|\omega-f(\alpha)|=\lim_{n\rightarrow \infty} |f(w_n)-f(\alpha)|\geq \liminf_{w\rightarrow\partial \Omega}|f(w)-f(\alpha)|=\rho>0.$$
Therefore, the boundary $\partial f(\Omega)$ of the open set $f(\Omega)$ lies outside of the ball $B\big(f(\alpha), \rho\big)$. Consequently, $f(\Omega)$ must contain the ball $B\big(f(\alpha), \rho\big)$.
\end{proof}

\begin{proof}[Proof of Theorem $\ref{th:Koebe-theorem}$]
In view of Theorem \ref{Weak Open mapping}, $f(\mathbb B)$ is open in $\mathbb O$. Since $f(0)=0$, the desired result immediately follows from Proposition \ref{open} and the first inequality in (\ref{eq:111}).
\end{proof}

\section{A new and sharp boundary Schwarz lemma for quaternionic slice regular functions}
\label{Quaternionic BSL}

In this section, we turn our attention to quaternionic slice regular functions. In this special setting, with a completely new approach, we can   strengthen a result first proved in \cite{WR} by the author and Ren, analogous to Theorem \ref{BSL}. Our quaternionic boundary Schwarz lemma with optimal estimate involves a Lie bracket, improves considerably a well-known Osserman type estimate  and provides additionally  all the extremal functions.

\subsection{Quaternionic slice regular functions}
Let $\mathbb H$ denote the non-commutative, associative, real algebra of quaternions with standard basis $\{1,\,i,\,j, \,k\}$,  subject to the multiplication rules
$$i^2=j^2=k^2=ijk=-1.$$
Let $\langle$ , $\rangle$ denote  the standard inner product on $\mathbb H\cong\mathbb R^4$, i.e. $$\langle p,q\rangle={\rm{Re}}(p\bar{q})=\sum\limits_{n=0}^3x_ny_n$$ for any $p=x_0+x_1i+x_2j+x_3k$, $q=y_0+y_1i+y_2j+y_3k\in \mathbb H$.

We shall consider the slice regular functions defined on domains in  quaternions $\mathbb H$ with values in $\mathbb H$. To introduce the theory of quaternionic slice regular functions, we will denote by $\mathbb S$ the unit $2$-sphere of purely imaginary quaternions, i.e.
$$\mathbb S=\big\{q\in\mathbb H:q^2=-1\big\}.$$
For a given element $\xi\in\mathbb H$, we denote by $\mathbb S_{\xi}$ the associated 2-sphere (reduces to the point $\xi$ when $\xi$ is real):
$$\mathbb S_{\xi}:=\big\{q\xi q^{-1}: q\in\mathbb H\setminus\{0\}\big\}.$$
Recall that two quaternions belong to the same sphere if and only if they have the same modulus and the same real part.
For every $I \in \mathbb S $ we will denote by $\mathbb C_I$ the plane $ \mathbb R \oplus I\mathbb R $, isomorphic to $ \mathbb C$, and, if $\Omega \subseteq \mathbb H$, by $\Omega_I$ the intersection $ \Omega \cap \mathbb C_I $. Also, we will denote by $B(0, R)$ the Euclidean open  ball of radius $R$ centred at the origin, i.e.
$$B(0, R)=\big\{q \in \mathbb H:|q|<R\big\}.$$
For simplicity, we denote by $\mathbb B$ the ball $B(0, 1)$.

We can now recall the definition of slice regularity.
\begin{definition} \label{de: regular} Let $\Omega$ be a domain in $\mathbb H$. A function $f :\Omega \rightarrow \mathbb H$ is called (left) \emph{slice} \emph{regular} if, for all $ I \in \mathbb S$, its restriction $f_I$ to $\Omega_I$ is \emph{holomorphic}, i.e., it has continuous partial derivatives and satisfies
$$\bar{\partial}_I f(x+yI):=\frac{1}{2}\left(\frac{\partial}{\partial x}+I\frac{\partial}{\partial y}\right)f_I (x+yI)=0$$
for all $x+yI\in \Omega_I $.
 \end{definition}

The notions of slice domain, of symmetric slice domain and of slice derivative are similar to those already given in Section 2. Moreover, the corresponding results  hold of course for quaternionic slice regular functions, such as the splitting lemma, the representation formula, the power series expansion and so on. The regular product, regular conjugate, symmetrization and regular reciprocal of quaternionic slice regular functions can also defined in an analogous way. To have a more complete insight on the theory, we refer the reader to the monograph \cite{GSS}.

For our later purpose, we need to recall some results. The first one clarifies a nice  connection between the regular product and  the usual pointwise one (see \cite{GSS1,CGSS}):
\begin{proposition} \label{prop:RP}
Let $f$ and $g$ be  slice regular on $B=B(0,R)$. Then for all $q\in B$,
$$f\ast g(q)=
\left\{
\begin{array}{lll}
f(q)g\big(f(q)^{-1}qf(q)\big) \qquad \,\,if \qquad f(q)\neq 0;
\\
\qquad  \qquad  0\qquad  \qquad \qquad if \qquad f(q)=0.
\end{array}
\right.
$$
\end{proposition}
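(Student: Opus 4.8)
The plan is to exploit the power series representation available on the ball $B=B(0,R)$ together with the \emph{associativity} of $\mathbb H$ — this is precisely the feature that makes the identity clean, and whose failure over $\mathbb O$ produces the \emph{camshaft effect} mentioned in the introduction. Write the Taylor expansions $f(q)=\sum_{n\ge 0}q^na_n$ and $g(q)=\sum_{n\ge 0}q^nb_n$, both absolutely and locally uniformly convergent on $B$, so that by definition $f\ast g(q)=\sum_{m\ge 0}q^m\sum_{k=0}^m a_kb_{m-k}$. One may assume $f\not\equiv 0$, since otherwise both sides vanish identically.

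\textbf{The case $f(q)\neq 0$.} Put $\widetilde q:=f(q)^{-1}qf(q)$. It is conjugate to $q$, so $|\widetilde q|=|q|<R$ and $g(\widetilde q)$ is defined, and by associativity $\widetilde q^{\,n}=f(q)^{-1}q^nf(q)$ for every $n$. Using associativity and distributivity repeatedly — $f(q)\big(\widetilde q^{\,n}b_n\big)=\big(q^nf(q)\big)b_n=q^n\big(f(q)b_n\big)$, then $f(q)b_n=\sum_k(q^ka_k)b_n$, and $q^n\big((q^ka_k)b_n\big)=q^{n+k}(a_kb_n)$ — one obtains
\[
f(q)\,g(\widetilde q)=\sum_{n\ge 0}q^n\big(f(q)b_n\big)=\sum_{n,k\ge 0}q^{n+k}(a_kb_n),
\]
the rearrangement of the double series being legitimate because $\sum_{n,k}|q|^{n+k}|a_k|\,|b_n|=\big(\sum_k|q|^k|a_k|\big)\big(\sum_n|q|^n|b_n|\big)<\infty$, as $|q|<R$. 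Reindexing by $m=n+k$ gives $f(q)\,g(\widetilde q)=\sum_{m}q^m\sum_{k=0}^m a_kb_{m-k}=f\ast g(q)$, as claimed.

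\textbf{The case $f(q)=0$.} Here I would pass to the limit. Pick $I\in\mathbb S$ with $q\in\mathbb C_I$; since $f\not\equiv 0$, the identity principle forces $f_I\not\equiv 0$ on $\mathbb B_I$, so its zeros are isolated and there is a sequence $q_n\to q$ in $\mathbb C_I$ with $f(q_n)\neq 0$. By the previous case, $f\ast g(q_n)=f(q_n)\,g\big(f(q_n)^{-1}q_nf(q_n)\big)$. The arguments $f(q_n)^{-1}q_nf(q_n)$ have modulus $|q_n|\to|q|<R$, hence eventually lie in a fixed compact subset of $B$ on which the continuous function $g$ is bounded; since $f(q_n)\to 0$, the right-hand side tends to $0$, and continuity of the slice regular function $f\ast g$ yields $f\ast g(q)=0$.

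I do not expect a genuine obstacle: the whole content is the observation that over an associative algebra the $\ast$-product linearizes in this way, and the computation above is completely routine. The only places demanding a little care are the absolute-convergence justification for commuting the two summations (immediate from $|q|<R$) and, in the degenerate case, the boundedness of $g$ along the approximating sequence. An alternative, equally elementary route is to split $f_I=F_1+F_2J$ and $g_I=G_1+G_2J$ via the splitting lemma and verify both sides in coordinates; the power series argument above is shorter.
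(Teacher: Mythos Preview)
Your proof is correct. The paper does not actually prove this proposition; it quotes it from the literature (Gentili--Stoppato and Colombo--Gentili--Sabadini--Struppa), and the argument you give is precisely the standard power-series computation found there, exploiting associativity via $f(q)\widetilde q^{\,n}=q^nf(q)$ and handling the zero case by continuity along a sequence in the slice $\mathbb C_I$ avoiding the isolated zeros of $f_I$.
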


The second one shows that the regular quotient is nicely related to the pointwise quotient (see \cite{Stop1, Stop2}):
\begin{proposition} \label{prop:Quotient Relation}
Let $f$ and $g$ be slice regular  on  $B=B(0,R)$. Then for all $q\in B \setminus \mathcal{Z}_{f^s}$, $$f^{-\ast}\ast g(q)=f\big(T_f(q)\big)^{-1}g\big(T_f(q)\big),$$
where $T_f:B \setminus \mathcal{Z}_{f^s}\rightarrow B \setminus \mathcal{Z}_{f^s}$ is defined by
$T_f(q)=f^c(q)^{-1}qf^c(q)$. Furthermore, $T_f$ and $T_{f^c}$ are mutual inverses so that $T_f$ is a diffeomorphism.
\end{proposition}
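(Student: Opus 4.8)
The plan is to reduce the statement to Proposition \ref{prop:RP} by a purely algebraic manipulation, since $f^{-\ast}\ast g = (f^s)^{-1}f^c\ast g = (f^s)^{-1}(f^c\ast g)$ and the factor $(f^s)^{-1}$ is slice preserving, hence can be handled separately. First I would apply Proposition \ref{prop:RP} to the product $f^c\ast g$: at a point $q$ where $f^c(q)\neq 0$ (equivalently, by \cite[Corollary 19]{Ghiloni1}-type results, where $f^s(q)\neq 0$), one has $f^c\ast g(q) = f^c(q)\,g\big(f^c(q)^{-1}qf^c(q)\big) = f^c(q)\,g\big(T_f(q)\big)$. Next, since $f^s$ is slice preserving and $q$, $f^s(q)$ lie in the same complex plane, multiplication by $(f^s(q))^{-1}$ is just the pointwise scalar action, so $f^{-\ast}\ast g(q) = (f^s(q))^{-1}f^c(q)\,g\big(T_f(q)\big)$. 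It then remains to identify $(f^s(q))^{-1}f^c(q)$ with $f\big(T_f(q)\big)^{-1}$. For this I would use the identity $f^s = f\ast f^c = f^c\ast f$ together with Proposition \ref{prop:RP} applied to $f^c\ast f$: $f^s(q) = f^c(q)\,f\big(f^c(q)^{-1}qf^c(q)\big) = f^c(q)\,f\big(T_f(q)\big)$, whence $(f^s(q))^{-1}f^c(q) = \big(f^c(q)\,f(T_f(q))\big)^{-1}f^c(q) = f\big(T_f(q)\big)^{-1}f^c(q)^{-1}f^c(q) = f\big(T_f(q)\big)^{-1}$ (here I use that $\mathbb H$ is associative and that $f\big(T_f(q)\big)\neq 0$, which follows from $f^s(q)\neq 0$). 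Combining gives $f^{-\ast}\ast g(q) = f\big(T_f(q)\big)^{-1}g\big(T_f(q)\big)$, as claimed.

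For the final assertion, I would verify that $T_f$ and $T_{f^c}$ are mutual inverses by a direct computation. Recall $(f^c)^c = f$, so $T_{f^c}(q) = f(q)^{-1}qf(q)$. Given $p = T_f(q) = f^c(q)^{-1}qf^c(q)$, one computes $T_{f^c}(p) = f(p)^{-1}p\,f(p)$; using the relation $f^s(q) = f^c(q)f(T_f(q)) = f^c(q)f(p)$ derived above, one has $f(p) = f^c(q)^{-1}f^s(q)$, and since $f^s(q)$ commutes with $q$ (slice-preserving), substituting and simplifying with the associativity of $\mathbb H$ yields $T_{f^c}(p) = q$; the reverse composition is symmetric. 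Smoothness of $T_f$ on $B\setminus\mathcal Z_{f^s}$ is clear since $f^c$ and $f^s$ are real-analytic there and $f^s$ is non-vanishing, so $T_f$ is a diffeomorphism.

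The main obstacle I anticipate is bookkeeping about where each formula is valid: Proposition \ref{prop:RP} splits into the cases $f^c(q)\neq 0$ and $f^c(q)=0$, and one must be sure that $\mathcal Z_{f^c} \subseteq \mathcal Z_{f^s}$ (true since $f^s = f^c\ast f$ and zeros of a $\ast$-product contain zeros of the first factor, cf. the zero-set relations in the excerpt) so that on $B\setminus\mathcal Z_{f^s}$ the ``nonzero'' branch always applies and $T_f$ is genuinely well-defined. A secondary subtlety is the repeated use of Artin's theorem / associativity to justify rearranging products like $f^c(q)^{-1}\,q\,f^c(q)$ and to cancel $f^c(q)^{-1}f^c(q)$; in the quaternionic (associative) setting this is automatic, but it is worth stating explicitly so the argument transfers cleanly. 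Everything else is routine algebra with the already-established properties of regular product, regular conjugate, and symmetrization.
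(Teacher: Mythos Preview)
Your argument is correct and is precisely the standard derivation: reduce $f^{-\ast}\ast g$ to $(f^s)^{-1}(f^c\ast g)$ via the slice-preserving property of $f^s$, apply Proposition~\ref{prop:RP} once to $f^c\ast g$ and once to $f^c\ast f=f^s$, and cancel using associativity of $\mathbb H$; the mutual-inverse check via $f(p)=f^c(q)^{-1}f^s(q)$ and commutation of $f^s(q)$ with $q$ is likewise the expected computation. Note that the paper itself does not give a proof of this proposition but simply quotes it from \cite{Stop1,Stop2}; your write-up reproduces exactly the argument in those sources, so there is nothing further to compare.
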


For any two numbers $x_0$, $y_0\in\mathbb R$ and each $R>0$, we denote by $U(x_0+y_0\mathbb S, R)$
the symmetric open subset of $\mathbb H$ given by
$$U(x_0+y_0\mathbb S, R):=\big\{q\in\mathbb H:\big|(q-x_0)^2+y_0^2\big|<R^2\big\}.$$
 The third one was the so-called spherical series expansion proved in \cite{Stop3} for slice regular functions; see Theorems 4.1 and 6.1 there for more details.
\begin{theorem}\label{direction derivative}
Let $f$ be a slice regular function on a symmetric slice domain $\Omega$, and let $q_0=x_0+Iy_0\in U(x_0+y_0\mathbb S, R)\subseteq\Omega$. Then there exists $\{A_n\}_{n\in\mathbb N}\subset \mathbb H$ such that
 \begin{equation}\label{150}
 f(q)=\sum\limits_{n=0}^{\infty}
 \big((q-x_0)^2+y_0^2\big)^n\big(A_{2n}+(q-q_0)A_{2n+1}\big)
 \end{equation}
 for all $q\in U(x_0+y_0\mathbb S, R).$
 \end{theorem}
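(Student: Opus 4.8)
The plan is to obtain \eqref{150} as a ``two-point'' (or \emph{spherical}) expansion of $f$ about the pair of conjugate points $q_0=x_0+Iy_0$ and $\bar q_0=x_0-Iy_0$: the coefficients $A_n$ will be produced by iterating the regular division operators $R_\xi$ introduced in Section~\ref{Proof of Theorem BSL}, and the convergence on the region $U(x_0+y_0\mathbb S,R)$ will be read off from the one-variable holomorphic theory via the splitting lemma, the global identity being supplied by the representation formula. Throughout write $\Delta_{q_0}(q)=(q-q_0)\ast(q-\bar q_0)=(q-x_0)^2+y_0^2$. If $y_0=0$ the claim is just the Taylor expansion of $f$ at the real point $x_0$ separated into its parts of even and odd degree, so I assume $y_0\neq 0$.

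\emph{The finite expansion.} The quaternionic analogues of \eqref{de:Rf1}--\eqref{de:Rf2}, applied to $f$ at $\xi=q_0$, give
\begin{equation*}
f(q)=f(q_0)+(q-q_0)R_{q_0}f(\bar q_0)+\Delta_{q_0}(q)\,R_{\bar q_0}R_{q_0}f(q).
\end{equation*}
Since $R_{\bar q_0}R_{q_0}f$ is again regular on $\Omega$, applying the same identity to it, then to $(R_{\bar q_0}R_{q_0})^2f$, and so on, one proves by induction on $N$ that
\begin{equation*}
f(q)=\sum_{n=0}^{N-1}\Delta_{q_0}(q)^{\,n}\big(A_{2n}+(q-q_0)A_{2n+1}\big)+\Delta_{q_0}(q)^{\,N}\,(R_{\bar q_0}R_{q_0})^{N}f(q)
\end{equation*}
on all of $\Omega$, where $A_{2n}:=(R_{\bar q_0}R_{q_0})^{n}f(q_0)\in\mathbb H$ and $A_{2n+1}:=R_{q_0}(R_{\bar q_0}R_{q_0})^{n}f(\bar q_0)\in\mathbb H$ (so $A_0=f(q_0)$ and $A_1=\partial_sf(q_0)$); keeping the factor $\Delta_{q_0}(q)^{\,n}$ always on the left at each stage, no commutativity issue arises. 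It remains only to show that the remainder $\Delta_{q_0}(q)^{\,N}(R_{\bar q_0}R_{q_0})^{N}f(q)$ tends to $0$ locally uniformly on $U(x_0+y_0\mathbb S,R)$.

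\emph{Convergence.} Fix $I\in\mathbb S$ with $q_0\in\mathbb C_I$, and note that $U(x_0+y_0\mathbb S,R)\cap\mathbb C_I=\{z\in\mathbb C_I:|(z-x_0)^2+y_0^2|<R^2\}$. By the splitting lemma write $f_I=F_1+F_2J$ with $F_1,F_2:\Omega_I\to\mathbb C_I$ holomorphic. Because $y_0\neq0$, the substitution $w=(z-x_0)^2+y_0^2$ presents $U(x_0+y_0\mathbb S,R)\cap\mathbb C_I$ as a double cover of the disc $\{|w|<R^2\}$ (branched over $w=y_0^2$ if $R>|y_0|$, trivial otherwise), with deck transformation $\sigma(z)=2x_0-z$ interchanging $q_0$ and $\bar q_0$; splitting each $F_l$ into its $\sigma$-symmetric and $\sigma$-antisymmetric parts and dividing the latter by $z-q_0$ produces holomorphic functions $P_l,Q_l$ on $\{|w|<R^2\}$ with $F_l(z)=P_l(w)+(z-q_0)Q_l(w)$. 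Expanding $P_l=\sum_np^{(l)}_nw^n$, $Q_l=\sum_nq^{(l)}_nw^n$ and recombining yields \eqref{150} on $\mathbb C_I\cap U(x_0+y_0\mathbb S,R)$ with $A_{2n}=p^{(1)}_n+p^{(2)}_nJ$ and $A_{2n+1}=q^{(1)}_n+q^{(2)}_nJ$; by uniqueness of the one-slice expansion these coincide with the coefficients found above. The ordinary Cauchy estimates for $P_l,Q_l$ on $\{|w|<R^2\}$ give $|A_{2n}|,|A_{2n+1}|\le C_\rho\,\rho^{-n}$ for every $\rho<R^2$; since $|\Delta_{q_0}(q)|<R^2$ on $U(x_0+y_0\mathbb S,R)$ and $|q-q_0|$ is bounded on compacta, the series in \eqref{150} converges absolutely and locally uniformly there, and the remainder above vanishes in the limit. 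The series thus defines a slice regular function on $U(x_0+y_0\mathbb S,R)$ agreeing with $f$ on the slice $\mathbb C_I$; as $U(x_0+y_0\mathbb S,R)$ is axially symmetric (and conjugation invariant), the representation formula forces equality with $f$ throughout $U(x_0+y_0\mathbb S,R)$, which is \eqref{150}.

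\emph{The main obstacle.} The delicate step is the middle one: making precise the branched substitution $w=(z-x_0)^2+y_0^2$ and, above all, verifying that $P_l$ and $Q_l$ are holomorphic on \emph{exactly} the disc $\{|w|<R^2\}$ — equivalently, that the natural domain of convergence of the spherical expansion is precisely $U(x_0+y_0\mathbb S,R)$. This forces one to track the geometry of $U(x_0+y_0\mathbb S,R)\cap\mathbb C_I$, which is one or two topological discs according as $R>|y_0|$ or $R\le|y_0|$, together with the removable singularity of $Q_l$ at the branch value $w=y_0^2$ in the former case.
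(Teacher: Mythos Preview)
The paper does not prove this theorem: it is quoted from Stoppato \cite{Stop3} (``The third one was the so-called spherical series expansion proved in \cite{Stop3}\ldots; see Theorems 4.1 and 6.1 there for more details''), so there is no proof in the paper to compare against. Your argument is nonetheless in the spirit of Stoppato's original one, which also produces the coefficients by iterating the operators $R_{q_0}$ and $R_{\bar q_0}$ and then establishes convergence by reducing to one-variable holomorphic theory.

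Your proof is essentially correct. Two remarks. First, the closing ``main obstacle'' paragraph reads as if you are unsure whether the branched-cover step goes through; it does. For $R>|y_0|$ the map $z\mapsto w=(z-x_0)^2+y_0^2$ is a branched double cover of $\{|w|<R^2\}$ with unique branch point $z=x_0$, and the putative singularity of $Q_l$ at $w=y_0^2$ is removable because the divided difference $\dfrac{F_l(x_0+s)-F_l(x_0-s)}{2s}$ is even in $s=\sqrt{w-y_0^2}$, hence a genuine power series in $w-y_0^2$. For $R\le|y_0|$ the map restricts to a biholomorphism on each of the two components, and your formulae for $P_l,Q_l$ use both sheets, so holomorphy on the full disc $\{|w|<R^2\}$ is immediate. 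Second, the final appeal to the representation formula deserves one more word in the case $R\le|y_0|$, since then $U(x_0+y_0\mathbb S,R)$ does not meet the real axis and is not a slice domain. The clean fix is to note that the partial sums are quaternionic polynomials, hence satisfy the representation formula on all of $\mathbb H$, and $f$ satisfies it on the ambient symmetric slice domain $\Omega\supseteq U$; passing to the locally uniform limit gives the identity on every slice of $U$ from the one slice $\mathbb C_I$ you have already handled.
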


 As a consequence of Theorem \ref{direction derivative}, we obtain that for all $v\in\mathbb H$ with $|v|=1$ the directional derivative of $f$ along $v$ at a point $q_0$ is given by
$$\frac{\partial f}{\partial v}(q_0)=\lim\limits_{t\rightarrow 0}\frac{f(q_0+tv)-f(q_0)}{t}=vA_1+(q_0v-v\overline{q}_0)A_2,$$
where $$A_1=R_{q_0}f(\overline{q}_0)=\partial_s f(q_0)  \qquad \mbox{and} \qquad A_2=R_{\overline{q}_0}R_{q_0}f(q_0)$$
are obtained in the same manner as in (\ref{de:Rf1}) and (\ref{de:Rf2}).
In particular, there holds that
$$f'(q_0)=R_{q_0}f(q_0)=A_1+2\,{\rm{Im}}(q_0)A_2.$$

\subsection{Formulation  and proof of quaternionic boundary Schwarz lemma}

Our subsequent argument involves  the so-called slice regular M\"{o}bius transformations of $\mathbb B$ onto $\mathbb B$, which are slice regular functions  $f$ on $\mathbb B$ given by
$$f(q)=\big(1-q\overline{u}\big)^{-\ast}\ast\big(q-u\big)v$$ with $u\in \mathbb B$ and $v\in \partial \mathbb B$ (see \cite[Corollary 7.2]{Stop4}; also \cite[Corollary 9.17]{GSS}). It is also useful to recall  the quaternionic version of the classical Julia lemma (see \cite[Theorem 1]{WR}):
\begin{theorem}\label{Julia}
Let $f$ be a  slice  regular self-mapping of the open unit ball $\mathbb B$ and let $\xi\in\partial \mathbb B$. Suppose that there exists a sequence $\{q_n\}_{n\in \mathbb N}\subset \mathbb B$ converging to $\xi$ as $n$ tends to $\infty$, such that the limits
$$\alpha:=\lim\limits_{n\rightarrow\infty}\frac{1-|f(q_n)|}{1-|q_n|}$$
and
$$\eta:=\lim\limits_{n\rightarrow\infty}f(q_n)$$
exist $($finitely$)$. Then $\alpha>0$ and the  inequality
\begin{equation}\label{eq:11}
{\rm{Re}}\Big(\big(1-f(q)\overline{\eta}\big)^{-\ast}\ast\big(1+f(q)\overline{\eta}\big)\Big)
\geq \frac{1}{\alpha}\
{\rm{Re}}\Big(\big(1-q\overline{\xi}\,\big)^{-\ast}\ast\big(1+q\overline{\xi}\,\big)\Big)
\end{equation}
holds throughout the open unit ball $\mathbb B$ and is strict except for slice regular M\"obius transformations of $\mathbb B$.
\end{theorem}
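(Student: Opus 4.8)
The plan is to follow the classical route to Julia's lemma --- an invariant Schwarz--Pick inequality, a passage to the limit along $\{q_n\}$, and a Cayley-transform reformulation --- supplying the complex-analytic input through the splitting lemma and disposing of the $\ast$-quotients in \eqref{eq:11} by means of Proposition~\ref{prop:Quotient Relation}. The reformulation rests on the elementary identity
\begin{equation*}
{\rm{Re}}\big((1-s)^{-1}(1+s)\big)=\frac{1-|s|^2}{|1-s|^2},\qquad s\in\mathbb H,\ |s|<1,
\end{equation*}
which displays both sides of \eqref{eq:11} as (pulled-back) Herglotz kernels.

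First I would pass to a distinguished slice. Write $\xi=x_0+y_0I_\xi$ (the case $\xi\in\mathbb R$ is handled the same way with any fixed $I_\xi\in\mathbb S$), and reduce --- via the representation formula --- to a sequence $\{q_n\}\subset\mathbb B_{I_\xi}$ still satisfying the hypotheses of the theorem. Splitting $f_{I_\xi}=F_1+F_2J$ with $F_1,F_2\colon\mathbb B_{I_\xi}\to\mathbb C_{I_\xi}$ holomorphic and $|F_1|^2+|F_2|^2=|f|^2<1$, the map $\Phi:=(F_1,F_2)$ is a holomorphic self-map of the unit ball $B^2\subset\mathbb C_{I_\xi}^2$. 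Conjugating \cite[Theorem~8.1.4]{Rudin} by the automorphisms of $B^2$ that carry $\Phi(q_n)$, respectively $q_n$, to the origin gives
\begin{equation*}
\frac{1-|f(q)|^2}{\big|1-\langle\Phi(q),\Phi(q_n)\rangle_{\mathbb C_{I_\xi}^2}\big|^2}\ \geq\ \frac{1-|q|^2}{|1-\overline{q_n}q|^2}\cdot\frac{1-|q_n|^2}{1-|f(q_n)|^2},\qquad q\in\mathbb B_{I_\xi}.
\end{equation*}
Letting $n\to\infty$: the hypothesis $(1-|f(q_n)|)/(1-|q_n|)\to\alpha$ forces $|f(q_n)|\to1$, whence $\alpha>0$ and the last factor tends to $1/\alpha$; meanwhile $q_n\to\xi$ and, along a subsequence, $\Phi(q_n)\to w_\eta\in\partial B^2$, where $w_\eta=(\eta_1,\eta_2)$ is the splitting of $\eta=\eta_1+\eta_2J$. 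This is the horosphere estimate on $\mathbb B_{I_\xi}$; the representation formula then spreads it to all of $\mathbb B$.

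It remains to recast this as \eqref{eq:11}. By Proposition~\ref{prop:Quotient Relation}, writing $\phi:=1-f\overline\eta$,
\begin{equation*}
\big(\phi^{-\ast}\ast(1+f\overline\eta)\big)(q)=\big(1-f(T_\phi(q))\overline\eta\big)^{-1}\big(1+f(T_\phi(q))\overline\eta\big),\qquad T_\phi(q)\in\mathbb S_q,
\end{equation*}
and similarly for the regular function $1-q\overline\xi$; since each $T_{\,\cdot\,}$ is a diffeomorphism preserving the conjugacy classes $\mathbb S_q$, one evaluates the horosphere estimate at the points of $\mathbb S_q$ these maps single out --- there the two $\ast$-quotients are genuine pointwise quotients --- and, applying the displayed identity on both sides, reads off \eqref{eq:11}. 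For the rigidity statement, equality in \eqref{eq:11} at one interior point forces equality in the invariant Schwarz inequality for $B^2$, so $\Phi$, and hence $f_{I_\xi}$ after undoing the splitting and the automorphisms, is the restriction of a regular M\"obius transformation; since a slice regular function is determined by one slice, $f$ is then a slice regular M\"obius transformation of $\mathbb B$, and conversely a direct computation (again through Proposition~\ref{prop:Quotient Relation}) shows that for such $f$ the estimate is an equality --- so \eqref{eq:11} is strict except precisely for these maps.

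The step I expect to be the genuine obstacle is the non-commutative bookkeeping in the last paragraph. The splitting furnishes control of $\big|1-\langle\Phi(q),w_\eta\rangle\big|^2$, which is strictly smaller than $|1-f(q)\overline\eta|^2$ --- by exactly the squared norm of the component of $f(q)\overline\eta$ orthogonal to $\mathbb C_{I_\xi}$ --- so the inequality cannot be read off at $q$ itself; one must transport it to the sphere points $T_\phi(q)$, where that orthogonal component vanishes, and then check that the companion map $T_{1-q\overline\xi}$ on the right-hand side of \eqref{eq:11} is compatible enough for the horosphere inequality, and its equality case, to survive the transfer. Establishing this compatibility --- together with the careful reduction of the limiting data on $\{q_n\}$ to the slice $\mathbb B_{I_\xi}$ via the representation formula, which is delicate since a priori $q_n$ may approach $\xi$ from many slices at once --- is where the bulk of the work lies.
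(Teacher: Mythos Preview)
The paper does not prove this theorem: it is stated there without proof, explicitly recalled from the authors' earlier work \cite[Theorem~1]{WR}, and then used as a black box in the proof of Theorem~\ref{Generalized Herzig}. There is therefore no proof in the present paper against which to compare your attempt.

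That said, your sketch contains a genuine gap at exactly the point you yourself identify. The $B^2$ Schwarz--Pick inequality controls $\big|1-\langle\Phi(q),w_\eta\rangle_{\mathbb C_{I_\xi}^2}\big|^2$, whereas \eqref{eq:11} requires control of $|1-f(q)\overline\eta|^2$; since
\[
|1-f(q)\overline\eta|^2=\big|1-\langle\Phi(q),w_\eta\rangle\big|^2+|F_2(q)\eta_1-F_1(q)\eta_2|^2,
\]
the horosphere bound you obtain points the \emph{wrong way}. Your proposed cure via Proposition~\ref{prop:Quotient Relation} does not obviously close this: the two transports $T_{1-f\overline\eta}$ and $T_{1-q\overline\xi}$ send a given $q$ to \emph{different} points of $\mathbb S_q$, so there is no single evaluation at which both sides of \eqref{eq:11} simultaneously become honest pointwise quotients; and once you leave the slice $\mathbb B_{I_\xi}$ the $B^2$ estimate is no longer available (the representation formula is an affine identity, not a device that propagates one-sided inequalities of horosphere type). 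The rigidity clause is also underdetermined: equality in the invariant Schwarz lemma for $B^2$ singles out automorphisms of $B^2$, a strictly larger family than restrictions of slice regular M\"obius transformations of $\mathbb B$, so further argument is needed to cut down to the stated extremals. If you want to pursue this route, the original proof in \cite{WR} is the place to look for how these obstructions are actually handled.
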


Inequality $(\ref{eq:11})$ will be called  Julia's inequality for the convenience of referring back to it.

Now we state and prove the main result of this section. The proof is  based on Theorem \ref{Julia}, instead of a Lindel\"{o}f type inequality proved in \cite[Propostion 3]{WR}.
\begin{theorem}\label{Generalized Herzig}
Let $\xi\in \partial\mathbb B$ and $f$ be a slice regular function on $\mathbb B\cup\mathbb S_{\xi}$ such that $f(\mathbb B)\subseteq\mathbb B$ and $f(\xi)\in \partial\mathbb B$. Denote by $\delta$ the quantity
\begin{equation*}\label{delta-notation}
 \overline{\xi}\Big(f(\xi)\overline{f'(\xi)}+\big[\bar{\xi}, f(\xi)\overline{R_{\bar{\xi}}R_{\xi}f(\xi)}\,\big]\Big).
\end{equation*}
Then
\begin{enumerate}
\item[(i)]
the following sharp estimate  holds:
\begin{equation}\label{Schwarz ineq}
\delta \geq \dfrac{2}
{\mathcal{S}+\dfrac{1-|f(0)|^2}{|f(\xi)-f(0)|^2}},
\end{equation}
where
\begin{equation}\label{S-notation}
\mathcal{S}:=
{\rm{Re}}\Big(f'(0)\big(f(\xi)-f(0)\big)^{-1}\xi\big(1-f(0)\overline{f(\xi)}\,\big)^{-1}\Big),
\end{equation}
and
$$R_{\xi}f(q):=(q-\xi)^{-\ast}\ast\big(f(q)-f(\xi)\big).$$
Equality in inequality $(\ref{Schwarz ineq})$ holds if and only if $f$ is of the form
\begin{equation}\label{S-exe-funs}
f(q)=\Big(1-q\big(1-qa\bar{\eta}\big)^{-\ast}\ast
\big(q\bar{\eta}-a\big)\overline{f(0)v}\Big)^{-\ast}
\ast\Big(f(0)-q\big(1-qa\bar{\eta}\big)^{-\ast}\ast\big(q\bar{\eta}-a\big)\bar{v}\Big),
\end{equation}
 where $$a\in [-1,1),\qquad v=\big(f(0)-f(\xi)\big)^{-1}\xi \big(1-f(\xi)\overline{f(0)}\,\big)\in\partial\mathbb B,$$
  and
  $$\eta=\big(1-f(\xi)\overline{f(0)}\,\big)^{-1}\xi
  \big(1-f(\xi)\overline{f(0)}\,\big)\in\partial\mathbb B.$$
Moreover, it holds that
\begin{equation}\label{inner estimate}
 \Big\langle f(t\xi),\, f(\xi)\Big\rangle\geq \frac{(\delta+1)t-(\delta-1)}{(\delta+1)-(\delta-1)t}, \qquad \forall\, t\in(-1,1),
\end{equation}
with equality for some $t_0\in(-1,1)$ if and only if
\begin{equation}\label{inner extremal funs}
f(q)=\Big(q(\delta-1)-\xi(\delta+1)\Big)^{-\ast}\ast \Big(\xi(\delta-1)- q(\delta+1)\Big)f(\xi).
\end{equation}

\item[(ii)]
if further  $$f^{(k)}(0)=0, \qquad \forall\ k=0,1,\ldots,n-1$$
for some  $n\in\mathbb N$, then
 $$\delta
 \geq n+\dfrac{2}
 {\mathcal{T}+\dfrac{1-|f^{(n)}(0)/n!|^2}{\big|f(\xi)-\xi^nf^{(n)}(0)/n!\big|^2}},$$
 where
$$\mathcal{T}:={\rm{Re}}\bigg(\frac{f^{(n+1)}(0)}{(n+1)!}
\Big(\xi^{-n}f(\xi)-f^{(n)}(0)/n!\Big)
\xi\Big(1-f^{(n)}(0)\overline{\xi^{-n}f(\xi)}/n!\Big)^{-1}\bigg).$$
Equality holds for the last  inequality if and only if $f$ is of the form  $$f(q)=q^n\bigg(1-q\big(1-qb\bar{\eta}\big)^{-\ast}\ast
\big(q\bar{\eta}-b\big)\frac{\overline{f^{(n)}(0)v}}{n!}\bigg)^{-\ast}
\ast\bigg(\frac{f^{(n)}(0)}{n!}-q\big(1-qb\bar{\eta}\big)^{-\ast}
\ast\big(q\bar{\eta}-b\big)\bar{v}\bigg),$$
 where $$b\in [-1,1),\qquad v=\Big(\xi^nf^{(n)}(0)/n!-f(\xi)\big)^{-1}\xi \big(\xi^n-f(\xi)\overline{f^{(n)}(0)}/n!\Big)\in\partial\mathbb B,$$
  and
  $$\eta=\big(\xi^n-f(\xi)\overline{f^{(n)}(0)}/n!\big)^{-1}\xi
  \big(\xi^n-f(\xi)\overline{f^{(n)}(0)}/n!\,\big)\in\partial\mathbb B.$$
 In particular, $$\overline{\xi}\Big(f(\xi)\overline{f'(\xi)}+\big[\bar{\xi}, f(\xi)\overline{R_{\bar{\xi}}R_{\xi}f(\xi)}\,\big]\Big)
 > n$$ unless $f(q)=q^nu$ for some $u\in\partial\mathbb B$.
\\
Moreover, it holds that
\begin{equation*}
 \Big\langle f(t\xi),\, f(\xi)\Big\rangle\geq t^n\frac{(\delta-n+1)t-(\delta-n-1)}
 {(\delta-n+1)-(\delta-n-1)t}, \qquad \forall\, t\in(-1,1),
\end{equation*}
with equality for some $t_0\in(-1,1)$ if and only if
\begin{equation*}
f(q)=q^n\Big(q(\delta-n-1)-\xi(\delta-n+1)\Big)^{-\ast}\ast \Big(\xi(\delta-n-1)- q(\delta-n+1)\Big)\overline{\xi}^{\,n}f(\xi).
\end{equation*}
\end{enumerate}
\end{theorem}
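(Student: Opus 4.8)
The plan is to deduce everything from the quaternionic Julia lemma (Theorem \ref{Julia}), following the architecture of the classical proof of the generalized Osserman inequality over $\mathbb{C}$, with Propositions \ref{prop:RP} and \ref{prop:Quotient Relation} absorbing the non-commutativity. I would begin by identifying $\delta$: applying the spherical series expansion (Theorem \ref{direction derivative}) and rerunning the variational computation from the proof of Theorem \ref{BSL} --- in which the associator terms drop out since $\mathbb{H}$ is associative --- one gets that $\delta$ is a positive real number, that $\delta=\dfrac{\partial|f|}{\partial\xi}(\xi)$ is the radial derivative of $|f|$ at $\xi$, and that $\delta=\lim_{t\uparrow1}\dfrac{1-|f(t\xi)|}{1-t}$ is exactly the Julia quotient of $f$ along the radius through $\xi$. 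Replacing $f$ by $q\mapsto f(q)\overline{f(\xi)}$ alters neither $|f|$ nor $\delta$ and normalizes $f(\xi)=1$, so I assume this henceforth and undo it at the end.

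Next I would apply Theorem \ref{Julia} along $q_n=(1-1/n)\xi$: since $f$ is regular on $\mathbb{B}\cup\mathbb{S}_\xi$ the two limits there exist, with $\eta=f(\xi)$ and $\alpha=\delta>0$, so Julia's inequality $(\ref{eq:11})$ holds throughout $\mathbb{B}$, strict except for slice regular M\"obius transformations. To obtain $(\ref{inner estimate})$ I put $q=t\xi$ in $(\ref{eq:11})$: the right-hand side collapses to $\frac1\delta\cdot\frac{1+t}{1-t}$ because $t\xi\overline\xi=t\in\mathbb{R}$, while the left-hand side is rewritten, via Proposition \ref{prop:Quotient Relation}, as a pointwise Cayley transform $\mathrm{Re}\big((1-w)^{-1}(1+w)\big)=\frac{1-|w|^2}{|1-w|^2}$ evaluated at $w=f(q')$ with $q'\in\mathbb{S}_{t\xi}$; bounding this above by using that, among quaternions of fixed real part, $\frac{1-|w|^2}{|1-w|^2}$ is largest when $w$ is real, together with the representation formula (cf. Proposition \ref{convex combination identity} and \cite[Corollary 19]{Ghiloni1}) to compare $\mathrm{Re}\,f(q')$ with $\langle f(t\xi),f(\xi)\rangle$, one is left with a fractional-linear inequality in $\langle f(t\xi),f(\xi)\rangle$ that, once solved, is precisely $(\ref{inner estimate})$; the equality case of Julia's lemma forces $f$ to be the M\"obius transformation $(\ref{inner extremal funs})$.

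For the sharp estimate $(\ref{Schwarz ineq})$ I would pass to the normalized situation $f(0)=0$ by means of the M\"obius-type auxiliary function $C:=\big(1-f\,\overline{f(0)}\big)^{-\ast}\ast\big(f(0)-f\big)$, regular on $\mathbb{B}\cup\mathbb{S}_\xi$: using the quaternionic Schwarz--Pick identity $1-\big|(1-w\overline u)^{-1}(u-w)\big|^{2}=\dfrac{(1-|u|^{2})(1-|w|^{2})}{|1-w\overline u|^{2}}$, the zero-set relation \cite[Corollary 19]{Ghiloni1} (so that $(1-f\,\overline{f(0)})^{s}$ never vanishes), and Proposition \ref{prop:Quotient Relation}, one checks that $C(\mathbb{B})\subseteq\mathbb{B}$, that $C(0)=0$ and $|C(\xi)|=1$, and that $f=\big(1-C\,\overline{f(0)}\big)^{-\ast}\ast\big(f(0)-C\big)$, i.e. $f\mapsto C$ is involutive. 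The normalized case then follows by a second application of Julia's lemma: factoring $C=q\ast D$ with $D$ regular (so $D(0)=C'(0)$, $|D(\xi)|=1$) one has $\dfrac{\partial|C|}{\partial\xi}(\xi)=1+\dfrac{\partial|D|}{\partial\xi}(\xi)$, and combining the Julia lower bound for $D$ at the interior point $0$ with the identity $\langle D(0),D(\xi)\rangle=\mathrm{Re}\big(C'(0)C(\xi)^{-1}\xi\big)$ gives $\dfrac{\partial|C|}{\partial\xi}(\xi)\ge\dfrac{2}{1+\mathrm{Re}(C'(0)C(\xi)^{-1}\xi)}$, with equality exactly when, after the unimodular rotation $C\mapsto C\,\overline{C(\xi)}\xi$, $C$ has the form $q\mapsto q(1-qa\overline\xi)^{-\ast}\ast(q\overline\xi-a)$, $a\in[-1,1)$ --- equivalently the extremal of Theorem \ref{BSL}(ii). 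Finally, differentiating the Schwarz--Pick identity along the radius at $\xi$ converts $\dfrac{\partial|C|}{\partial\xi}(\xi)$ into $\delta$ and $\mathrm{Re}(C'(0)C(\xi)^{-1}\xi)$ into $\mathcal{S}+\dfrac{1-|f(0)|^{2}}{|f(\xi)-f(0)|^{2}}$ (using $|f(\xi)-f(0)|^{2}=|1-f(0)\overline{f(\xi)}|^{2}$), yielding $(\ref{Schwarz ineq})$, and transporting the extremal $C$ back through the involution and the rotation produces $(\ref{S-exe-funs})$ with the stated $a,v,\eta$.

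Part (ii) is obtained by dividing out a power of $q$: writing $f=q^{n}\ast g$ (legitimate since $q\mapsto q^{n}$ is slice preserving, so $q^{n}\ast g=q^{n}g$), the maximum principle (Theorem \ref{MP}) shows $g$ is again a regular self-mapping of $\mathbb{B}$ unless $f(q)=q^{n}u$ for some $u\in\partial\mathbb{B}$ (the stated exceptional case), with $g(0)=f^{(n)}(0)/n!$, $g'(0)=f^{(n+1)}(0)/(n+1)!$, $|g(\xi)|=1$; applying part (i) to $g$ and using $\dfrac{\partial|q^{n}g|}{\partial\xi}(\xi)=n+\dfrac{\partial|g|}{\partial\xi}(\xi)$ upgrades the bound for $g$ by the additive constant $n$, with $\mathcal{T}$ now playing the role of $\mathcal{S}_{g}$; the extremal functions and the last inner-product inequality follow by carrying those from part (i) through $f=q^{n}\ast g$. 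The main obstacle throughout is the $\ast$-product bookkeeping forced by non-commutativity --- concretely, verifying that the auxiliary function $C$ inherits the boundary condition $|C(\xi)|=1$, since, unlike over $\mathbb{C}$, post-composition by a M\"obius map is unavailable and one must instead control the conjugating diffeomorphism $T_{f^{c}}$ of Proposition \ref{prop:Quotient Relation} as the radius approaches $\xi$. By comparison, the matching of the M\"obius parameters $a,v,\eta$ in the extremal functions, though lengthy, is routine.
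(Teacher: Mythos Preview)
Your architecture for the sharp estimate $(\ref{Schwarz ineq})$ is essentially the paper's: the paper also introduces the M\"obius-type auxiliary $g(q)=(1-f(q)\overline{f(0)})^{-\ast}\ast(f(0)-f(q))v$ (your $C$, up to the unimodular rotation $v$), factors out $q$ to get $h=q^{-1}g$, and applies the Julia lemma to $h$. The delicate point you flag---that the curve $t\mapsto T_{1-f(0)\ast f^c}(\xi-t\xi)$ is not the radius toward the boundary fixed point of $g$---is exactly what the paper singles out; it handles this by proving the identity
\[
\frac{\partial|f|^2}{\partial\xi}(\xi)=\frac{|f(0)-f(\xi)|^2}{1-|f(0)|^2}\,\lim_{t\to 0^+}\frac{1-|g\circ T_{1-f(0)\ast f^c}(\xi-t\xi)|^2}{t}
\]
and then bounding the limit via Julia's inequality for $h$ evaluated at $q=0$, rather than via any ``radial derivative of $|g|$.'' Your plan to ``differentiate the Schwarz--Pick identity along the radius'' amounts to the same computation. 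Part (ii) is handled identically in both.

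The genuine discrepancy is your derivation of the inner-product inequality $(\ref{inner estimate})$. Your argument inserts $q=t\xi$ into Julia's inequality, rewrites the left side via Proposition~\ref{prop:Quotient Relation} as $\dfrac{1-|f(q')|^2}{|1-f(q')|^2}$ with $q'=T_{(1-f)^c}(t\xi)\in\mathbb{S}_{t\xi}$, and then bounds this above by $\dfrac{1+\mathrm{Re}\,f(q')}{1-\mathrm{Re}\,f(q')}$. This yields a lower bound on $\mathrm{Re}\,f(q')$, not on $\mathrm{Re}\,f(t\xi)=\langle f(t\xi),f(\xi)\rangle$; since for slice regular $f$ one has $f(x+yI)=\alpha(x,y)+I\beta(x,y)$ with $\alpha,\beta$ quaternion-valued, $\mathrm{Re}\,f$ is \emph{not} constant on spheres $\mathbb{S}_{t\xi}$, and the representation formula does not convert one into the other. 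The paper avoids this entirely: it uses the splitting lemma on the slice $\mathbb{B}_I\ni\xi$ to write $f(z)\overline{f(\xi)}=\varphi(z)+\psi(z)J$ with $\varphi:\mathbb{B}_I\to\mathbb{B}_I$ holomorphic and $\varphi(\xi)=1$, observes $\delta=\xi\varphi'(\xi)$ by Julia--Wolff--Carath\'eodory, and then invokes Minda's classical one-variable theorem for $\varphi$ to obtain $(\ref{inner estimate})$; the equality case forces $\psi\equiv 0$ via the maximum principle, giving $(\ref{inner extremal funs})$. You should replace your Julia-based argument for $(\ref{inner estimate})$ with this splitting approach.
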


\begin{proof}[Proof of Theorem $\ref{Generalized Herzig}$]
We first prove the assertion $\textrm{(i)}$. In \cite[Theorem 4]{WR},  we have proved that
\begin{equation}\label{15110}
\frac{\partial |f|}{\partial \xi}(\xi)=\overline{\xi}\Big(f(\xi)\overline{f'(\xi)}+\big[\bar{\xi}, f(\xi)\overline{R_{\bar{\xi}}R_{\xi}f(\xi)}\,\big]\Big).
\end{equation}
So to obtain the desired sharp estimate in (\ref{Schwarz ineq}), it suffices to prove that
\begin{equation}\label{direction-der-estimate}
\frac{\partial |f|^2}{\partial \xi}(\xi)\geq \dfrac{4}
{\mathcal{S}+\dfrac{1-|f(0)|^2}{|f(\xi)-f(0)|^2}}
\end{equation}
with $\mathcal{S}$ being the same as in (\ref{S-notation}),
we proceed as follows.
Set
\begin{equation}\label{v-notation}
 v=\big(f(0)-f(\xi)\big)^{-1}\xi \big(1-f(\xi)\overline{f(0)}\,\big),
\end{equation}
which belongs to $\partial\mathbb B$, for $f(\xi)\in\partial\mathbb B$ by assumption. Set
\begin{equation}\label{15111}
g(q):=\big(1-f(q)\overline{f(0)}\,\big)^{-\ast}\ast\big(f(0)-f(q)\big)v,
\end{equation}
 then $g$ is a slice regular function on $\mathbb B\cup\mathbb S_{\xi}$ such that $g(\mathbb B)\subseteq\mathbb B$. Furthermore, it is evident that $g(0)=0$ and
\begin{equation}\label{15112}
g'(0)=-\frac{f'(0)}{1-|f(0)|^2}v.
\end{equation}
Denote
\begin{equation}\label{eta-notation}
\eta=T_{1-f(0)\ast f^c}(\xi)\in\mathbb \partial \mathbb B,
\end{equation}
which is a boundary fixed point of $g$. Indeed, it easily follows from Proposition \ref{prop:Quotient Relation}, (\ref{v-notation}) and (\ref{15111}) that
\begin{equation}\label{eta-fixed}
  g(\eta)=\big(1-f(\xi)\overline{f(0)}\,\big)^{-1}\xi\big(1-f(\xi)\overline{f(0)}\,\big)
=T_{1-f(0)\ast f^c}(\xi)=\eta,
\end{equation}
and hence the slice regular function $g$ satisfies all the assumptions in Theorem \ref{Generalized Herzig}.

We next claim that
\begin{equation}\label{key equation}
\frac{\partial |f|^2}{\partial \xi}(\xi)
=\frac{\big|f(0)-f(\xi)\big|^2}{1-|f(0)|^2}\lim\limits_{t\rightarrow 0^+}\frac{1-\big|g \circ T_{1-f(0)\ast f^c}(\xi-t\xi)\big|^2}{t}.
\end{equation}
First, from (\ref{15111}) we obtain that
$$f(q)=\big(1-g(q)\bar{v}\overline{f(0)}\,\big)^{-\ast}\ast\big(f(0)-g(q)\bar{v}\big).$$
This together with Proposition \ref{prop:Quotient Relation} implies
\begin{equation}\label{15113-01}
f(q)=\Big(1-g\circ T_{1-g\overline{f(0)v}}(q)\overline{f(0)v}\Big)^{-1}\Big(f(0)-g\circ T_{1-g\overline{f(0)v}}(q)\bar{v}\Big),
\end{equation}
from which one easily deduces  that
$$1-|f(q)|^2=\frac{\big(1-|f(0)|^2\big)\big(1-\big|g\circ T_{1-g\overline{f(0)v}}(q)\big|^2\big)}{\big|1-g\circ T_{1-g\overline{f(0)v}}(q)\overline{f(0)v}\big|^2}.$$
Consequently,
\begin{equation}\label{D-derivative}
\begin{split}
\frac{\partial |f|^2}{\partial \xi}(\xi)
&=\lim\limits_{t\rightarrow 0^+}\frac{1-\big|f(\xi-t\xi)\big|^2}{t}\\
&=\frac{1-|f(0)|^2}{\big|f(0)-g\circ T_{1-g\overline{f(0)v}}(\xi)\bar{v}\big|^2}\lim\limits_{t\rightarrow 0^+}\frac{1-\big|g\circ T_{1-g\overline{f(0)v}}(\xi-t\xi)\big|^2}{t}.\\
\end{split}
\end{equation}
Now a direct calculation gives that
$$1-g\overline{f(0)v}=\big(1-|f(0)|^2\big)\big(1-f\overline{f(0)}\,\big)^{-\ast},$$
which leads to
\begin{equation}\label{fg-translation}
T_{1-g\overline{f(0)v}}
=T_{(1-f\overline{f(0)})^{-\ast}}
=T_{1-f(0)\ast f^c}.
\end{equation}
This fact together with the notation of $\eta$ in (\ref{eta-notation}) implies that
\begin{equation}\label{T-relation}
\eta=T_{1-f(0)\ast f^c}(\xi)=T_{1-g\overline{f(0)v}}(\xi).
\end{equation}
Furthermore, it follows from (\ref{15113-01}) and (\ref{T-relation}) that
$$g\circ T_{1-g\overline{f(0)v}}(\xi)\bar{v}=g(\eta)\bar{v}=\eta\bar{v}
=\big(1-f(\xi)\overline{f(0)}\,\big)^{-1}\xi\big(f(0)-f(\xi)\big)$$
and hence
\begin{equation}\label{fg-modulus}
\big|f(0)-g\circ T_{1-g\overline{f(0)v}}(\xi)\bar{v}\big|
=\frac{1-|f(0)|^2}{\big|f(0)-f(\xi)\big|}.
\end{equation}
Now  (\ref{key equation}) immediately follows by substituting  (\ref{fg-translation}) and (\ref{fg-modulus}) into  (\ref{D-derivative}).

Next we turn to the estimate from below of the limit
$$\lim\limits_{t\rightarrow 0^+}\frac{1-\big|g \circ T_{1-f(0)\ast f^c}(\xi-t\xi)\big|^2}{t}$$
appeared in (\ref{key equation}). At first sight,
it should be  the directional derivative of $|g|^2$ along $\eta$ at the boundary point $\eta\in\mathbb \partial \mathbb B$. Unfortunately, it is in general not the case (It is obviously the case for $\xi=1$ or $f(0)=0$).
Even though the smooth curve
$$t\mapsto\Gamma(t):=T_{1-g\overline{f(0)v}}(\xi-t\xi)$$
defined on some interval $(-\varepsilon, \varepsilon)$ with  $\varepsilon>0$ sufficiently small goes through the point
$$\Gamma(0)=T_{1-g\overline{f(0)v}}(\xi)=\eta\in\mathbb \partial \mathbb B,$$ its tangent vector $\Gamma'(0)$ at $t=0$ is not necessarily the same as the direction $\eta\in\mathbb \partial \mathbb B$.  However, we still can  estimate the above limit  in virtue of Theorem \ref{Julia}. Indeed, applying Theorem \ref{Julia} and Julia inequality (\ref{eq:11}) to the slice regular function $h(q):=q^{-1}g(q)$ mapping  $\mathbb B$  to $\overline{\mathbb B}$ with $h(\bar{\eta})=1$ yields that

\begin{equation*}\label{estimate-below of $h$}
\lim\limits_{t\rightarrow 0^+}\frac{1-\big|h \circ T_{1-f(0)\ast f^c}(\xi-t\xi)\big|^2}{t}
\geq \frac1{{\rm{Re}}\Big(\big(1-h(0)\big)^{-1}\big(1+h(0)\big)\Big)}\\
=\frac{|1-h(0)|^2}{1-|h(0)|^2},
\end{equation*}
and hence
\begin{equation}\label{estimate-below of $g$}
\begin{split}
\lim\limits_{t\rightarrow 0^+}\frac{1-\big|g \circ T_{1-f(0)\ast f^c}(\xi-t\xi)\big|^2}{t}
&=1+\lim\limits_{t\rightarrow 0^+}\frac{1-\big|h \circ T_{1-f(0)\ast f^c}(\xi-t\xi)\big|^2}{t}\\
&\geq \frac{2\big(1-\textrm{Re}\,h(0)\big)}{1-|h(0)|^2}\\
&\geq \frac{2}{1+\textrm{Re}\,h(0)}\\
&=\frac{2}{1+\textrm{Re}\,g'(0)}.
\end{split}
\end{equation}
Now substituting  (\ref{v-notation}), (\ref{15112}) and (\ref{estimate-below of $g$}) into (\ref{key equation})  yields the desired sharp estimate in (\ref{direction-der-estimate}), and thus  completes the proof of inequality (\ref{Schwarz ineq}).

If equality holds for inequality in (\ref{Schwarz ineq}), then equalities hold for all the inequalities in (\ref{estimate-below of $g$}), thus from the condition for equality in the Julia inequality (\ref{eq:11}) and the above deduction of (\ref{estimate-below of $g$}) it follows that $h$ is of the form
\begin{equation}\label{h-exe-funs}
h(q)=\big(1-qa\bar{\eta}\big)^{-\ast}\ast\big(q\bar{\eta}-a\big)
\end{equation}
with some constant $a\in [-1,1)$.
Consequently, $f$ must be of the form
\begin{equation}\label{exe-funs}
f(q)=\Big(1-q\big(1-qa\bar{\eta}\big)^{-\ast}\ast\big(q\bar{\eta}-a\big)\overline{f(0)v}\Big)^{-\ast}
\ast\Big(f(0)-q\big(1-qa\bar{\eta}\big)^{-\ast}\ast\big(q\bar{\eta}-a\big)\bar{v}\Big),
\end{equation}
where $a\in [-1,1)$, and $v$ and $\eta$ are the same as those in (\ref{v-notation}) and (\ref{eta-notation}), respectively.
Therefore, the equality in inequality (\ref{Schwarz ineq}) can hold only for slice regular self-mappings  of  the form (\ref{exe-funs}), and a direct calculation shows that it does indeed hold for all such slice regular self-mappings. Now to complete the proof of $\textrm{(i)}$, it remains to prove inequality  $(\ref{inner estimate})$. To this end, we use the splitting lemma (cf. \cite[Lemma 1.3]{GSS}). Let $I\in\mathbb S$ be such that $\xi\in \partial\mathbb B\cap \mathbb C_I$ and let us split the slice regular function $f\overline{f(\xi)}$ as
$$f(z)\overline{f(\xi)}=\varphi(z)+\psi(z)J,\qquad \forall\, z\in\mathbb B_I,$$
where $J\in\mathbb S$ and $J\perp I$, and $\varphi$, $\psi$ are two holomorphic self-mappings of $\mathbb B_I$ satisfying
\begin{equation}\label{2-norm}
|f (z)|^2=|\varphi(z)|^2+|\psi(z)|^2
\end{equation}
for all $z\in\mathbb B_I$.
Moreover, it is evident that $$\varphi(\xi)=1, \qquad \psi(\xi)=0,$$
and
$$\Big\langle f(t\xi), f(\xi)\Big\rangle={\rm{Re}}\Big(f(t\xi) \overline{f(\xi)}\Big)={\rm{Re}}\,\varphi(t\xi).$$
Now inequality  $(\ref{inner estimate})$ follows immediately by applying Minda's theorem (see \cite[p. 135, Theorem 1]{Minda}) to the holomorphic self-mapping $\varphi$ of $\mathbb B_I$ and noticing that
$$\delta=\frac{\partial |f|}{\partial \xi}(\xi)=\frac{\partial |\varphi|}{\partial \xi}(\xi)=\xi\varphi'(\xi).$$
Here the last equality follows directly from an elementary geometric consideration about $\varphi$ at the boundary point $\xi$ or alternatively from the classical Julia-Wolff-Carath\'{e}odory theorem (cf. \cite{Sarason1}; also \cite[p. 48 (VI--3)]{Sarason2}).

If equality holds for inequality $(\ref{inner estimate})$ at some $t_0\in(-1,1)$, then it again follows from Minda's theorem that
\begin{equation}\label{Minda-Expression}
\varphi(z)=\frac{(\delta-1)\xi-(\delta+1)z}{(\delta-1)z-(\delta+1)\xi},\qquad \forall\, z\in\mathbb B_I.
\end{equation}
Furthermore, it follows from equality in (\ref{2-norm}) that
$$|\psi(z)|^2=|f (z)|^2-|\varphi(z)|^2\leq 1-|\varphi(z)|^2, \qquad \forall\, z\in\mathbb B_I,$$
which together with (\ref{Minda-Expression}) implies that $\psi\equiv0$, in virtue of the maximum principle,
and hence $f$  must be of the from in $(\ref{inner extremal funs})$.
 This completes the proof of $\textrm{(i)}$ and it remains to prove $\textrm{(ii)}$.

However, $\textrm{(ii)}$ follows easily from $\textrm{(i)}$ by considering  the slice regular function $h(q):=q^{-n}f(q)$ and noticing  that
 $$h(0)=\frac{f^{(n)}(0)}{n!},\qquad h'(0)=\frac{f^{(n+1)}(0)}{(n+1)!}.$$
Moreover,
$$f(\xi)\overline{f'(\xi)}+\big[\bar{\xi}, f(\xi)\overline{R_{\bar{\xi}}R_{\xi}f(\xi)}\,\big]
=n\xi+h(\xi)\overline{h'(\xi)}+\big[\bar{\xi}, h(\xi)\overline{R_{\bar{\xi}}R_{\xi}h(\xi)}\,\big]$$
as one easily verifies. Now the proof is complete.
\end{proof}

\begin{remark}\label{rem-Schwarz4}
In the preceding proof of the desired sharp estimate in (\ref{Schwarz ineq}),   the second inequality in (\ref{estimate-below of $g$}) plays a key role. If we make  use of the first one in (\ref{estimate-below of $g$}), we will obtain more precise estimate than that in (\ref{Schwarz ineq}). Formally, this estimate will be  much more complicated, but it is the same as  that in (\ref{Schwarz ineq}) if the functions of concern are the extremal functions given in (\ref{S-exe-funs}).
\end{remark}

\begin{remark}\label{rem-Schwarz5}
From inequality (\ref{inner estimate}), we can obtain the following estimate:
\begin{equation}\label{inner of ders}
\big\langle \xi^2f''(\xi), \, f(\xi)\big\rangle\geq \delta(\delta-1).
\end{equation}
Indeed, from inequality (\ref{inner estimate}) and the notion of $\delta$ it follows that
\begin{equation*}
\begin{split}
\big\langle  f(t\xi)-f(\xi)-(t-1)\xi f'(\xi), \, f(\xi)\big\rangle
&=\big\langle  f(t\xi), \, f(\xi)\big\rangle-1-(t-1)\delta\\
&\geq \frac{(\delta+1)t-(\delta-1)}{(\delta+1)-(\delta-1)t}-1-(t-1)\delta\\
&=(t-1)^2\frac{\delta(\delta-1)}{(\delta+1)-(\delta-1)t}
\end{split}
\end{equation*}
for all $t\in(-1,1)$. Now dividing by $(t-1)^2$ on both sides and then letting $t\rightarrow 1^-$ yields (\ref{inner of ders}). Alternatively, (\ref{inner of ders}) can also be proved by an argument  by means of the convexity of $f(\mathbb B)$ at the point $\xi\in\partial \mathbb B$. This argument seems more natural in principle, but rather difficult to deal with in practise, because of the computation of the second order differential of $f$.

Conversely, inequality (\ref{inner of ders}) reveals in a certain sense the convexity of the image $f(\mathbb B)$ at the point $\xi\in\partial \mathbb B$. For simplicity, we further assume that the slice regular function $f$ in Theorem \ref{Generalized Herzig} maps $\mathbb B_I$ into itself, i.e. $f(\mathbb B_I)\subseteq\mathbb B_I$, where $I=I_{\xi}$ is the pure imaginary unit identified by $\xi\in\partial\mathbb B$. In this special case, $\delta$ is precisely the positive number $\xi f'(\xi)/f(\xi)$, and inequality (\ref{inner of ders}) becomes
$${\rm{Re}}\bigg(\frac{\xi^2f''(\xi)}{f(\xi)}\bigg)\geq \delta(\delta-1).$$
We then obtain that
\begin{equation}\label{Convexity ineq}
{\rm{Re}}\bigg(\frac{\xi f''(\xi)}{f'(\xi)}+1\bigg)\geq \delta>0,
\end{equation}
which together with a well-known analytical characterization of convexity (cf. \cite[Theorem 2.2.3]{GG}) implies that $f(\mathbb B_I)$ is convex at $\xi\in\partial\mathbb B$. Furthermore, what is more interesting is that as shown by Theorem \ref{Generalized Herzig} (ii)  and inequality (\ref{Convexity ineq}), the higher the vanishing order of $f$ at the origin $0$ is, the more convex at the boundary point $\xi\in\partial\mathbb B$ the image $f(\mathbb B_I)$ of $\mathbb B_I$ under $f$ is, i.e. the bigger the number $${\rm{Re}}\bigg(\frac{\xi f''(\xi)}{f'(\xi)}+1\bigg)$$
is. Intuitively, this is indeed  the case.

\end{remark}

\subsection{Some corollaries of Theorem \ref{Generalized Herzig}}

First notice that  the term  on the right-hand side of  inequality (\ref{Schwarz ineq}) is clearly positive, for
$$|f'(0)|\leq 1-|f(0)|^2$$
 as shown by the Schwarz-Pick lemma (see \cite{BS, SABC}). Replacing the real part in the notation of $\mathcal{S}$ appearing in inequality (\ref{Schwarz ineq}) by  modulus yields inequality (\ref{weak-Schwarz ineq}) below. Hence,  the following corollary is a weaker version of Theorem \ref{Generalized Herzig}.

\begin{corollary}\label{Cor-Generalized Herzig}
Let $\xi\in \partial\mathbb B$ and $f$ be a slice regular function on $\mathbb B\cup\mathbb S_{\xi}$ such that $f(\mathbb B)\subseteq\mathbb B$ and $f(\xi)\in \partial\mathbb B$. Then
\begin{enumerate}
\item[(i)]
the following sharp estimate  holds:
\begin{equation}\label{weak-Schwarz ineq}
\overline{\xi}\Big(f(\xi)\overline{f'(\xi)}+\big[\bar{\xi}, f(\xi)\overline{R_{\bar{\xi}}R_{\xi}f(\xi)}\,\big]\Big)
\geq \frac{2\big|f(\xi)-f(0)\big|^2}{1-|f(0)|^2+|f'(0)|}.
\end{equation}
Moreover, equality holds for the last  inequality if and only if $f$ is of the form
\begin{equation}\label{functional equation}
f(q)=\Big(1-q\big(1-qa\bar{\eta}\big)^{-\ast}\ast
\big(q\bar{\eta}-a\big)\overline{f(0)v}\Big)^{-\ast}
\ast\Big(f(0)-q\big(1-qa\bar{\eta}\big)^{-\ast}\ast\big(q\bar{\eta}-a\big)\bar{v}\Big),
\end{equation}
 where $$a\in [-1,0],\qquad v=\big(f(0)-f(\xi)\big)^{-1}\xi \big(1-f(\xi)\overline{f(0)}\,\big)\in\partial\mathbb B,$$
  and
  $$\eta=\big(1-f(\xi)\overline{f(0)}\,\big)^{-1}\xi
  \big(1-f(\xi)\overline{f(0)}\,\big)\in\partial\mathbb B.$$

\item[(ii)]
if further  $$f^{(k)}(0)=0, \qquad \forall\ k=0,1,\ldots,n-1$$
for some  $n\in\mathbb N$, then
 $$\overline{\xi}\Big(f(\xi)\overline{f'(\xi)}+\big[\bar{\xi}, f(\xi)\overline{R_{\bar{\xi}}R_{\xi}f(\xi)}\,\big]\Big)
 \geq n+\frac{2\big|f(\xi)-\xi^nf^{(n)}(0)/n!\big|^2}{1-\big|f^{(n)}(0)/n!\big|^2
 +\big|f^{(n+1)}(0)\big|/(n+1)!}.$$
Moreover, equality holds for the last  inequality if and only if $f$ is of the form  $$f(q)=q^n\bigg(1-q\big(1-qb\bar{\eta}\big)^{-\ast}\ast
\big(q\bar{\eta}-b\big)\frac{\overline{f^{(n)}(0)v}}{n!}\bigg)^{-\ast}
\ast\bigg(\frac{f^{(n)}(0)}{n!}-q\big(1-qb\bar{\eta}\big)^{-\ast}
\ast\big(q\bar{\eta}-b\big)\bar{v}\bigg),$$
 where $$b\in [-1,0],\qquad v=\Big(\xi^nf^{(n)}(0)/n!-f(\xi)\Big)^{-1}\xi \Big(\xi^n-f(\xi)\overline{f^{(n)}(0)}/n!\Big)\in\partial\mathbb B,$$
  and
  $$\eta=\big(\xi^n-f(\xi)\overline{f^{(n)}(0)}/n!\big)^{-1}\xi
  \big(\xi^n-f(\xi)\overline{f^{(n)}(0)}/n!\,\big)\in\partial\mathbb B.$$
\end{enumerate}
\end{corollary}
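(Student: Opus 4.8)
The plan is to deduce Corollary \ref{Cor-Generalized Herzig} from Theorem \ref{Generalized Herzig} by estimating the real part $\mathcal{S}$ from above by a modulus. For part (i), since $t\mapsto 2/(t+c)$ is decreasing for $c>0$, it suffices to bound $\mathcal{S}$. Put $X:=f'(0)\big(f(\xi)-f(0)\big)^{-1}\xi\big(1-f(0)\overline{f(\xi)}\big)^{-1}$, so that $\mathcal{S}={\rm{Re}}(X)\leq|X|$. The key elementary observation is that $|f(\xi)|=1$ forces $1-f(0)\overline{f(\xi)}=\big(f(\xi)-f(0)\big)\overline{f(\xi)}$, whence $\big|1-f(0)\overline{f(\xi)}\big|=|f(\xi)-f(0)|$; together with $|\xi|=1$ this gives $|X|=|f'(0)|/|f(\xi)-f(0)|^{2}$, so that
\[
\mathcal{S}+\frac{1-|f(0)|^{2}}{|f(\xi)-f(0)|^{2}}\leq\frac{1-|f(0)|^{2}+|f'(0)|}{|f(\xi)-f(0)|^{2}}.
\]
Substituting this into the sharp estimate (\ref{Schwarz ineq}) yields (\ref{weak-Schwarz ineq}) at once.

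For the equality discussion, equality in (\ref{weak-Schwarz ineq}) is equivalent to equality both in (\ref{Schwarz ineq}) and in ${\rm{Re}}(X)\leq|X|$: the former forces $f$ to be of the form (\ref{S-exe-funs}) with $a\in[-1,1)$, and the latter means $X\geq0$. I would then evaluate $X$ for such $f$. From $g'(0)=h(0)=-a$ (by (\ref{h-exe-funs})) together with (\ref{15112}) one obtains $f'(0)=a\big(1-|f(0)|^{2}\big)\bar{v}$; inserting this, using the explicit form (\ref{v-notation}) of $v$, the identity $\overline{w}^{-1}w^{-1}=|w|^{-2}$ with $w=f(\xi)-f(0)$, and $\bar{\xi}\xi=1$, everything collapses to
\[
X=-\frac{a\big(1-|f(0)|^{2}\big)}{|f(\xi)-f(0)|^{2}},
\]
which is real. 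Hence $X\geq0$ if and only if $a\leq0$, so equality in (\ref{weak-Schwarz ineq}) forces $a\in[-1,0]$ and the extremal family (\ref{functional equation}); the converse — that every such $f$ attains equality — follows by retracing the chain of inequalities.

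Part (ii) I would reduce to part (i) exactly as in the proof of Theorem \ref{Generalized Herzig}(ii): apply (i) to the slice regular self-mapping $h(q):=q^{-n}f(q)$ of $\mathbb B$ (which is indeed a self-mapping by the iterated Schwarz lemma, unless $f(q)=q^{n}u$), for which $h(0)=f^{(n)}(0)/n!$, $h'(0)=f^{(n+1)}(0)/(n+1)!$, $|h(\xi)|=1$, and $|h(\xi)-h(0)|=\big|f(\xi)-\xi^{n}f^{(n)}(0)/n!\big|$. Multiplying the resulting estimate for $h$ by $\bar{\xi}$ and invoking the identity
\[
f(\xi)\overline{f'(\xi)}+\big[\bar{\xi},f(\xi)\overline{R_{\bar{\xi}}R_{\xi}f(\xi)}\,\big]=n\xi+h(\xi)\overline{h'(\xi)}+\big[\bar{\xi},h(\xi)\overline{R_{\bar{\xi}}R_{\xi}h(\xi)}\,\big]
\]
established there gives the stated inequality for $f$, while the equality case for $h$ (functions of type (\ref{functional equation}) with parameter in $[-1,0]$) transforms under $f=q^{n}h$ into the asserted family; the displayed formulas for $v$ and $\eta$ are those of part (i) rewritten through $h$.

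The hardest step will be the explicit evaluation of $X$ in the equality discussion of (i): it requires pushing the non-commutative algebra through with care — in particular verifying $\overline{f(\xi)-f(0)}^{\,-1}\big(f(\xi)-f(0)\big)^{-1}=|f(\xi)-f(0)|^{-2}$ and checking that the factors $1-f(0)\overline{f(\xi)}$ and $\xi$ cancel exactly once the closed forms of $v$ and $\eta$ are inserted — rather than anything conceptually deep.
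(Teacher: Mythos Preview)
Your proposal is correct and follows essentially the same route as the paper: bound $\mathcal{S}={\rm Re}(X)$ by $|X|$, observe via $|f(\xi)|=1$ that $|1-f(0)\overline{f(\xi)}|=|f(\xi)-f(0)|$ so that $|X|=|f'(0)|/|f(\xi)-f(0)|^{2}$, and then read off the equality condition as $X\geq 0$, equivalently $h(0)=-a\in[0,1)$, i.e.\ $a\in[-1,0]$. Your explicit evaluation of $X$ for the extremal family and your reduction of (ii) to (i) via $h(q)=q^{-n}f(q)$ are exactly the computations the paper leaves implicit.
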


\begin{proof}
We only give a proof of the assertion $\textrm{(i)}$, the other one being similar. Inequality (\ref{weak-Schwarz ineq})  follows immediately by replacing the real part in the notation of $\mathcal{S}$ appearing in inequality (\ref{Schwarz ineq}) by  modulus, and equality in (\ref{weak-Schwarz ineq}) holds if and only if
$$f'(0)\big(f(\xi)-f(0)\big)^{-1}
\xi\big(1-f(0)\overline{f(\xi)}\,\big)^{-1}\in\mathbb R^+,$$
which is equivalent to $h(0)\in \mathbb [0, 1]$, i.e. $a\in [-1,0]$.
Here the function $h$ is the one in (\ref{h-exe-funs}).
\end{proof}

Clearly, Corollary \ref{Cor-Generalized Herzig} implies \cite[Theorem 4]{WR}:
$$\overline{\xi}\Big(f(\xi)\overline{f'(\xi)}+\big[\bar{\xi}, f(\xi)\overline{R_{\bar{\xi}}R_{\xi}f(\xi)}\,\big]\Big)
\geq \frac{2\big(1-|f(0)|\big)^2}{1-|f(0)|^2+|f'(0)|},$$
and provides additionally all the extremal functions:
$$f(q)=\Big(1+q\big(1-qa\bar{\xi}\,\big)^{-\ast}
\ast\big(q\bar{\xi}-a\big)\bar{\xi}|f(0)|\Big)^{-\ast}
\ast\Big(|f(0)|+q\big(1-qa\bar{\xi}\,\big)^{-\ast}
\ast\big(q\bar{\xi}-a\big)\bar{\xi}\Big)f(\xi)$$
with $a\in[-1, 0]$. If the slice regular function $f$ considered in Theorem \ref{Generalized Herzig} has the interior fixed point $0$ and a boundary fixed point $\xi\in\partial \mathbb B$, then  Theorem \ref{Generalized Herzig} (i) implies:

\begin{corollary}\label{Cor-Schwarz}
Let $\xi\in \partial\mathbb B$ and $f$ be a slice regular function on $\mathbb B\cup\mathbb S_{\xi}$ such that $f(\mathbb B)\subseteq\mathbb B$, $f(0)=0$ and $f(\xi)=\xi$. Then
$$f'(\xi)-\big[\xi,R_{\bar{\xi}}R_{\xi}f(\xi)\big]\geq\frac{2}{1+{\rm{Re}}f'(0)}.$$
Moreover, equality holds for the last  inequality if and only if $f$ is of the form
$$f(q)=q\big(1-qa\bar{\xi}\,\big)^{-\ast}\ast\big(q-a\xi\big)\bar{\xi}$$
for some constant $a\in [-1,1)$.
\end{corollary}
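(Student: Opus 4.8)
The plan is to deduce this corollary directly from Theorem \ref{Generalized Herzig} (i) by specializing to the case $f(0)=0$ and $f(\xi)=\xi$. First I would substitute $f(0)=0$ into the quantities appearing in Theorem \ref{Generalized Herzig}: the quantity $\delta$ becomes $\overline{\xi}\big(f(\xi)\overline{f'(\xi)}+[\bar\xi,f(\xi)\overline{R_{\bar\xi}R_\xi f(\xi)}]\big)$, and then using $f(\xi)=\xi$ together with the identity $|\xi|=1$ one simplifies $\overline{\xi}(\xi\overline{f'(\xi)})=\overline{f'(\xi)}$ and $\overline{\xi}[\bar\xi,\xi\overline{R_{\bar\xi}R_\xi f(\xi)}]$; a short computation with the Lie bracket (using that $\bar\xi\xi=1$ and $\overline{\xi}(\bar\xi\,\overline{R}\,\xi)=\bar\xi\,\overline{R}$, where $\overline R=\overline{R_{\bar\xi}R_\xi f(\xi)}$) should show that $\delta = f'(\xi)-[\xi,R_{\bar\xi}R_\xi f(\xi)]$, which is the left-hand side claimed in the corollary. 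One should note here, as the paper already observes in Remark \ref{Non-real}, that even though $\delta$ is a positive real number, neither $f'(\xi)$ nor the Lie bracket individually need be real; only their difference is.

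Next I would evaluate the right-hand side of \eqref{Schwarz ineq} under these hypotheses. With $f(0)=0$ we have $|f(0)|^2=0$, $|f(\xi)-f(0)|^2=|\xi|^2=1$, and $1-f(0)\overline{f(\xi)}=1$, so the bracketed term $\dfrac{1-|f(0)|^2}{|f(\xi)-f(0)|^2}$ equals $1$. For $\mathcal S$ in \eqref{S-notation}, substituting $f(0)=0$, $f(\xi)=\xi$ gives $\mathcal S = {\rm Re}\big(f'(0)\,\xi^{-1}\,\xi\cdot 1\big)={\rm Re}\,f'(0)$. Hence the right-hand side of \eqref{Schwarz ineq} collapses to $\dfrac{2}{{\rm Re}\,f'(0)+1}$, which is exactly the bound asserted in the corollary. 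This establishes the inequality.

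For the equality statement, I would again specialize the extremal description \eqref{S-exe-funs} from Theorem \ref{Generalized Herzig} (i). With $f(0)=0$ and $f(\xi)=\xi$, one computes $v=\big(f(0)-f(\xi)\big)^{-1}\xi\big(1-f(\xi)\overline{f(0)}\big)=(-\xi)^{-1}\xi\cdot 1=-1$, and $\eta=\big(1-f(\xi)\overline{f(0)}\big)^{-1}\xi\big(1-f(\xi)\overline{f(0)}\big)=\xi$. Plugging $v=-1$, $\eta=\xi$, $f(0)=0$ into \eqref{S-exe-funs}, the outer $\big(1-q(\cdots)\overline{f(0)v}\big)^{-\ast}$ factor becomes $(1-0)^{-\ast}=1$ since $\overline{f(0)v}=0$, and the remaining factor is $-\,q\big(1-qa\bar\xi\big)^{-\ast}\ast\big(q\bar\xi-a\big)\bar v = q\big(1-qa\bar\xi\big)^{-\ast}\ast\big(q\bar\xi-a\big)$. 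To match the form stated in the corollary, namely $f(q)=q\big(1-qa\bar\xi\big)^{-\ast}\ast(q-a\xi)\bar\xi$, I would use that $\bar\xi$ is slice preserving (being $\bar\xi=\xi^{-1}$ lies in $\mathbb C_{I_\xi}$, so on the relevant slice it is a scalar in the sense of Artin's theorem) and rewrite $\big(q\bar\xi-a\big)=\big(q-a\xi\big)\bar\xi$, noting $\xi\bar\xi=1$. The main obstacle I anticipate is precisely this last bookkeeping step: one must be careful that the right-multiplication by $\bar\xi$ commutes appropriately with the $\ast$-product, which is justified by Remark \ref{remark on R-product03} since $\bar\xi$ generates (together with $q$ on each slice) an associative subalgebra, and this is exactly the kind of care the paper flags as delicate in the non-associative setting — though in the quaternionic section associativity is available, so the verification is routine. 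A direct calculation, as in the proof of Theorem \ref{BSL}, confirms that every function of the listed form does achieve equality, completing the proof.
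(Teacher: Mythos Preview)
Your proposal is correct and follows exactly the paper's approach: the paper states just before Corollary~\ref{Cor-Schwarz} that it is obtained from Theorem~\ref{Generalized Herzig}~(i) by specializing to $f(0)=0$ and $f(\xi)=\xi$, and your computations of $\delta$, $\mathcal S$, $v$, $\eta$ and the extremal form are the right specializations (note that since this corollary is in the quaternionic section, associativity makes the final identification $(q\bar\xi-a)=(q-a\xi)\bar\xi$ and the commuting of $\bar\xi$ through the $\ast$-product entirely routine, as you yourself observe). One small remark: the reality of $\delta$ that you use to pass from $\overline{f'(\xi)-[\xi,R]}$ to $f'(\xi)-[\xi,R]$ is justified in the paper by~(\ref{15110}), i.e.\ $\delta=\frac{\partial|f|}{\partial\xi}(\xi)$.
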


As indicated in \cite[Example 2]{WR},  the Lie bracket
in the preceding corollary does not vanish
and thus
$f'(\xi)$ is not necessarily a positive real number, in general.
 However, the same line of the proof of Theorem \ref{Generalized Herzig} implies simultaneously the following theorem, which provides a sharp lower bound for $|f'(\xi)|$.

\begin{theorem}\label{Schwarz in modulus}
Let $\xi\in \partial\mathbb B$ and $f$ be a slice regular function on $\mathbb B\cup\{\xi\}$ such that $f(\mathbb B)\subseteq\mathbb B$, $f(0)=0$ and $f(\xi)=\xi$. Then
 $$|f'(\xi)|\geq\frac{2}{1+{\rm{Re}}f'(0)}.$$
Moreover, equality holds for the last  inequality if and only if $f$ is of the form
$$f(q)=q\big(1-qa\bar{\xi}\,\big)^{-\ast}\ast\big(q-a\xi\big)\bar{\xi}$$
for some constant $a\in [-1,1)$.
\end{theorem}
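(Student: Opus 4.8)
The plan is to derive the bound from the sharp estimate for the radial derivative of $|f|$ that already underlies the proof of Theorem~\ref{Generalized Herzig}, and then pass to $|f'(\xi)|$ by a Cauchy--Schwarz step. Since $f$ is slice regular near $\xi$, Theorem~\ref{direction derivative} (and the ensuing identity $\frac{\partial f}{\partial\xi}(\xi)=\xi f'(\xi)$) gives
\[
|f'(\xi)|=\Bigl|\frac{\partial f}{\partial\xi}(\xi)\Bigr|\ \geq\ \frac{\partial|f|}{\partial\xi}(\xi)=\Bigl\langle f(\xi),\,\frac{\partial f}{\partial\xi}(\xi)\Bigr\rangle,
\]
where the inequality is Cauchy--Schwarz together with $|f(\xi)|=1$, with equality exactly when $\xi f'(\xi)$ is a nonnegative real multiple of $f(\xi)$. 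Hence it suffices to prove $\frac{\partial|f|}{\partial\xi}(\xi)\geq\frac{2}{1+\mathrm{Re}\,f'(0)}$; as this concerns $f$ only along the radial segment toward $\xi$, it explains why the weaker hypothesis $\mathbb B\cup\{\xi\}$ (rather than $\mathbb B\cup\mathbb S_{\xi}$) already suffices.

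Next I would introduce $h:=q^{-\ast}\ast f$, which is slice regular on $\mathbb B$ because $f(0)=0$ and, since $q\mapsto q$ is slice preserving (Remark~\ref{remark on R-product03}), satisfies the \emph{pointwise} factorization $f(q)=q\,h(q)$; in particular $h(0)=f'(0)$, and the Schwarz lemma $|f(q)|\leq|q|$ gives $|h|\leq1$ on $\mathbb B$. If $|h|$ attains the value $1$ inside $\mathbb B$, then $h$ is a unimodular constant $c$ by the maximum principle, so $f(q)=qc$, and $f(\xi)=\xi$ forces $c=1$, i.e. $f(q)=q$, which realises equality and is the $a=-1$ member of the asserted family. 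Otherwise $h(\mathbb B)\subseteq\mathbb B$; along $q_n:=(1-\tfrac1n)\xi\to\xi$ one has $h(q_n)=q_n^{-1}f(q_n)\to\xi^{-1}\xi=1$ by continuity of $f$ at $\xi$, and the limit defining Julia's coefficient is finite since $f$ is $C^1$ near $\xi$. Hence Julia's lemma (Theorem~\ref{Julia}) applies to $h$ with $\eta=1$, giving $\alpha:=\lim_n\frac{1-|h(q_n)|}{1-|q_n|}>0$ and Julia's inequality \eqref{eq:11}. Evaluating \eqref{eq:11} at $q=0$, using $\mathrm{Re}\bigl((1-h(0))^{-1}(1+h(0))\bigr)=\frac{1-|h(0)|^2}{|1-h(0)|^2}$ together with the elementary inequality $\frac{|1-h(0)|^2}{1-|h(0)|^2}\geq\frac{1-\mathrm{Re}\,h(0)}{1+\mathrm{Re}\,h(0)}$, gives $\alpha\geq\frac{1-\mathrm{Re}\,f'(0)}{1+\mathrm{Re}\,f'(0)}$. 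Finally $|f(r\xi)|=r\,|h(r\xi)|$ yields
\[
\frac{\partial|f|}{\partial\xi}(\xi)=\lim_{r\to1^-}\frac{1-|f(r\xi)|}{1-r}=1+\lim_{r\to1^-}\frac{r\,(1-|h(r\xi)|)}{1-r}=1+\alpha\ \geq\ \frac{2}{1+\mathrm{Re}\,f'(0)},
\]
which with the first display proves the inequality.

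For the equality statement, if equality holds in the theorem then both links of $|f'(\xi)|\geq\frac{\partial|f|}{\partial\xi}(\xi)\geq\frac{2}{1+\mathrm{Re}\,f'(0)}$ are equalities; equality in the second link forces $h(0)$ to be real and equality in Julia's inequality \eqref{eq:11}, so by the rigidity clause of Theorem~\ref{Julia} — exactly as in the passage to \eqref{h-exe-funs} in the proof of Theorem~\ref{Generalized Herzig} — one gets $h(q)=(1-qa\bar\xi)^{-\ast}\ast(q\bar\xi-a)$ for some $a\in[-1,1)$, and hence $f(q)=q\,h(q)=q(1-qa\bar\xi)^{-\ast}\ast(q-a\xi)\bar\xi$. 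Conversely, for such an $f$ one restricts to $\mathbb C_{I_\xi}$, where the $\ast$-products become ordinary products of holomorphic Blaschke-type maps, and computes $f'(\xi)=\frac{2}{1-a}>0$ and $\mathrm{Re}\,f'(0)=-a$; since $f'(\xi)>0$ the Cauchy--Schwarz step is saturated, so equality holds in the theorem. I expect the equality analysis to be the main obstacle: one must verify that the Julia-rigidity form of $h$ transcribes into exactly the stated form of $f$, and — the more delicate point — check on the slice $\mathbb C_{I_\xi}$ that $\xi f'(\xi)$ is a positive real multiple of $f(\xi)$, which is what makes the otherwise lossy inequality $|f'(\xi)|\geq\frac{\partial|f|}{\partial\xi}(\xi)$ tight for, and only for, the extremal maps.
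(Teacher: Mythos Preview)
Your proposal is correct and is essentially the paper's own argument: the paper does not give a separate proof of this theorem but states that ``the same line of the proof of Theorem~\ref{Generalized Herzig} implies simultaneously'' the result, and what you have written is exactly that specialization (with $f(0)=0$, $f(\xi)=\xi$, so $g=f$, $v=-1$, $\eta=\xi$, $T_{1-f(0)\ast f^c}=\mathrm{id}$), together with the Cauchy--Schwarz step $|f'(\xi)|=|\xi f'(\xi)|\geq\frac{\partial|f|}{\partial\xi}(\xi)$ that the paper itself uses (in the remark following Theorem~\ref{BSL}) to pass from the radial derivative of $|f|$ to $|f'(\xi)|$. Your observation that this step only needs regularity on $\mathbb B\cup\{\xi\}$ rather than $\mathbb B\cup\mathbb S_\xi$---because $\frac{\partial f}{\partial\xi}(\xi)=\xi f'(\xi)$ follows just from holomorphy of $f_{I_\xi}$ near $\xi$, not from the full spherical expansion---correctly explains the weaker hypothesis in the statement.
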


We now conclude  this paper  with a comparison of the results proved in this section and the corresponding results for holomorphic self-mappings of the open unit disc on the complex plane. Even in the complex setting, the result obtained in  Theorem \ref{Generalized Herzig} is a new result. More precisely, for every  holomorphic function $f$ on $\mathbb D\cup\{1\}$ (Since the automorphism group of biholomorphisms  of the open unit disk $\mathbb D\subset\mathbb C$ acts bi-transitively on the boundary $\partial\mathbb D$, we can assume without loss of generality that the boundary point $\xi\in\partial\mathbb D$ under consideration is $1$)  satisfying that
$f(\mathbb D)\subseteq \mathbb D$ and $f(1)=1$, it can extend regularly and uniquely to $\mathbb B\cup\{1\}$. We denote (with a slight abuse of notation) this unique
 regular extension still by $f$ itself. Thus $f$ is a slice regular function on $\mathbb B\cup\{1\}$ such that $f(\mathbb B)\subseteq\mathbb B$ and $f(1)=1$. The assertion that $f(\mathbb B)\subseteq\mathbb B$ follows easily from a convex combination identity in \cite{RW2}. For all such $f$, our result becomes
\begin{equation}\label{sharp estimate}
f'(1)\geq\dfrac{2}
{\textrm{Re}\bigg(\dfrac{1-f(0)^2+f'(0)}{\big(1-f(0)\big)^2}\bigg)},
\end{equation}
which implies
$$f'(1)\geq\frac{2\big|1-f(0)\big|^2}{1-|f(0)|^2+|f'(0)|}.$$
These two inequalities improve the following estimate (also called Osserman's inequality) established by Osserman in \cite{Osserman}:
$$f'(1)\geq \frac{2\big(1-|f(0)|\big)^2}{1-|f(0)|^2+|f'(0)|}.$$

This new  estimate in  (\ref{sharp estimate}) for holomorphic self-mappings of the open unit disk $\mathbb D$, with  boundary regular fixed point $1$, was initially proved in \cite[Theorem 3]{FLSV} via an analytic semigroup  approach and Julia-Wolff-Carath\'{e}odory theorem  for univalent holomorphic self-mappings of $\mathbb D$, which was derived by the method of extremal length. The method presented in \cite{FLSV} can not be used to get the extremal functions for which equality holds in  (\ref{sharp estimate}).  The proof presented in this paper for the special case that $\xi=1$ is quite elementary, and has its extra advantage of getting the extremal functions.

\bigskip
\bibliographystyle{amsplain}

\begin{thebibliography}{99}

\bibitem{SABC} D. Alpay, V. Bolotnikov, F. Colombo, I. Sabadini, \textit{Self-mappings of the quaternionic unit ball: multiplier properties, the Schwarz-Pick inequality, and the Nevanlinna-Pick interpolation problem}. Indiana Univ. Math. J. \textbf{64} (2015), 151--180.

\bibitem{ACS} D. Alpay, F. Colombo, I. Sabadini, \textit{Slice Hyperholomorphic Schur Analysis}, Quaderni Dipartimento di Mathematica del Politecnico di Milno, QDD 209, (2015).

\bibitem{BS} C. Bisi, C. Stoppato, \textit{The Schwarz-Pick lemma for slice regular functions}. Indiana Univ. Math. J. \textbf{61 } (2012), 297--317.


\bibitem{BMMPR} R. B. Burckel, D. E. Marshall, D. Minda, P. Poggi-Corradini, T. J. Ransford, \textit{Area, capacity and diameter versions of Schwarz's lemma}. Conform. Geom. Dyn. \textbf{12} (2008), 133--152.

\bibitem{CGSS} F. Colombo, G. Gentili, I. Sabadini, D. C. Struppa, \textit{Extension results for slice regular functions of a quaternionic variable}. Adv. Math. \textbf{222} (2009), 1793--1808.

\bibitem{CLSS} F. Colombo, R. L\'{a}vi\v{c}ka, I. Sabadini, V. Sou\v{c}ek, \textit{The Radon transform between monogenic and generalized slice monogenic functions}. Math. Ann. \textbf{363 } (2015),   733--752.

\bibitem{Co3} F. Colombo, I. Sabadini, D. C. Struppa,\textit{ An extension theorem for slice monogenic functions and some of its consequences}. Israel J. Math. \textbf{177} (2010), 369--389.


\bibitem{Co2} F. Colombo, I. Sabadini, D. C. Struppa, \textit{Noncommutative functional calculus. Theory and applications of slice hyperholomorphic functions.} Progress in Mathematics, vol. 289. Birkh\"auser/Springer, Basel, 2011.
\bibitem{Co6} F. Colombo, I. Sabadini, D. C. Struppa, \textit{Slice monogenic functions}. Israel J. Math. \textbf{171} (2009), 385--403.


\bibitem{FLSV} A. Frolova, M. Levenshtein, D. Shoikhet, A. Vasil'ev, \textit{Boundary distortion estimates for holomorphic maps}. Complex Anal. Oper. Theory. \textbf{8} (2014), 1129--1149.


\bibitem{GSS2014} G. Gentili, S. Salamon, C. Stoppato, \textit{Twistor transforms of quaternionic functions and orthogonal complex structures}. J. Eur. Math. Soc. \textbf{16} (2014),  2323--2353.

\bibitem{Gen-Sar} G. Gentili, G. Sarfatti, \textit{Landau-Toeplitz theorems for slice regular functions over quaternions}. Pacific J. Math. \textbf{265} (2013),  381--404.

\bibitem{GSS1} G. Gentili, C. Stoppato, \textit{Zeros of regular functions and polynomials of a quaternionic variable}. Michigan. Math. J. \textbf{56} (2008), 655--667.

\bibitem{GSS} G. Gentili, C. Stoppato, D. C. Struppa, \textit{Regular functions of a quaternionic variable}. Springer Monographs in Mathematics, Springer, Berlin-Heidelberg, 2013.
\bibitem{GS1} G. Gentili, D. C. Struppa,\textit{ A new approach to Cullen-regular functions of a quaternionic variable}. C. R. Math. Acad. Sci. Paris, \textbf{342}  (2006), 741--744.
\bibitem{GS2} G. Gentili, D. C. Struppa, \textit{A new theory of regular functions of a quaternionic variable}. Adv. Math.  \textbf{216}  (2007), 279--301.

\bibitem{GS50} G. Gentili, D. C. Struppa, \textit{Regular functions on the space of Cayley numbers}. Rocky Mt. J. Math. \textbf{40} (2010), 225--241.
\bibitem{GV} G. Gentili, F. Vlacci, \textit{Rigidity for regular functions over Hamilton and Cayley numbers and a boundary Schwarz Lemma}. Indag. Math. (N.S.) \textbf{19} (2008), 535--545.
\bibitem{Ghiloni1} R. Ghiloni, A. Perotti, \textit{Slice regular functions on real alternative algebras}. Adv. Math. \textbf{226} (2011), 1662--1691.
\bibitem{Ghiloni2} R. Ghiloni, A. Perotti, \textit{Zeros of regular functions of quaternionic and octonionic variable: a division lemma and the
cam-shaft effect}. Ann. Mat. Pura Appl. \textbf{190} (2011), 539--551.

\bibitem{Ghiloni5} R. Ghiloni, A. Perotti, \textit{Volume Cauchy formulas for slice functions on real associative *-algebras}. Complex Var. Elliptic Equ. \textit{58} (2013),  1701--1714.

\bibitem{Ghiloni3} R. Ghiloni, A. Perotti, \textit{Power and spherical series over real alternative $\ast$-algebras}, Indiana Univ. Math. J. \textbf{63} (2014),  495--532.

\bibitem{Ghiloni4} R. Ghiloni, A. Perotti, \textit{Global differential equations for slice regular functions}. Math. Nachr. \textbf{287} (2014), 561--573.
\bibitem{Ghiloni7} R. Ghiloni, A. Perotti, C. Stoppato, \textit{The algebra of slice functions}. Trans. Amer. Math. Soc., in press.


\bibitem{Ghiloni6} R. Ghiloni, V. Recupero, \textit{Semigroups over real alternative *-algebras: Generation theorems and spherical sectorial operators}. Trans. Amer. Math. Soc. \textbf{368} (2016), 2645--2678.

\bibitem{GG} I. Graham, G. Kohr, \textit{Geometric function theory in one and higher dimensions.} Monographs and Textbooks in Pure and Applied Mathematics, 255. Marcel Dekker, Inc., New York, 2003.

\bibitem{Harvey} F. R. Harvey, \textit{Spinors and Calibrations}. Perspectives in Mathematics, Vol. 9, Academic Press, San Diego 1990.

\bibitem{Klimek} M. Klimek, \textit{Pluripotential Theory}. Oxford University Press, 1991.

\bibitem{Lam} T. Y. Lam, \textit{A First Course in Noncommutative Rings}. Springer, New York, 1991.

\bibitem{Minda} D. Minda, \textit{The Bloch and Marden constant}. Computational methods and function theory. Springer Berlin Heidelberg, 1990, pp. 131--142.

\bibitem{Okubo} S. Okubo, \textit{Introduction to Octonion and Other Non-Associative Algebras in Physics}. Cambridge University Press, 1995.
    
\bibitem{Osserman} R. Osserman, \textit{A sharp Schwarz inequality on the boundary}. Proc. Amer. Math. Soc. \textbf{128} (2000), 3513--3517.



\bibitem{Poukka} K. A. Poukka,  \textit{\"{U}ber die gr\"{o}{\ss}te Schwankung einer analytischen Funktion auf einer Kreisperipherie}. Arch. der Math. und Physik. \textbf{12} (1907), 251--254.

\bibitem{RW0} G. Ren, X. Wang, \textit{Carath\'{e}odory theorems for slice regular functions}. Complex Anal. Oper. Theory \textbf{9} (2015),  1229--1243.

\bibitem{RW} G. Ren, X. Wang, \textit{Slice regular composition operators}.  Complex Var. Elliptic Equ. \textbf{61} (2016),  682--711.

\bibitem{RW2} G. Ren, X. Wang, \textit{The growth and distortion  theorems for slice monogenic functions}. submitted. see also:  arXiv:1410.4369v2.

\bibitem{WR}  G. Ren, X. Wang, \textit{Julia theory for slice regular functions}. Trans. Amer. Math. Soc., in press.

\bibitem{Rudin} W. Rudin, \textit{Function Theory in the Unit Ball of $\mathbb C^n$}, Springer-Verlag, New York, 1987.

\bibitem{Sarason1} D. Sarason, \textit{Angular derivatives via Hilbert space}. Complex Variables. \textbf{10} (1988), 1--10.
\bibitem{Sarason2} D. Sarason, \textit{Sub-Hardy Hilbert Spaces in the Unit Disk}. University of Arkansas Lecture Notes in the Mathematical Sciences, vol. 10. Wiley, New York, 1994.
    
\bibitem{Schafer} R. D. Schafer, \textit{An Introduction to Nonassociative Algebras}. Academic Press, New York, 1966.
    
\bibitem{Serodio} R.  Ser\^{o}dio, \textit{On octonionic polynomials}. Adv. Appl. Clifford Alg. \textbf{17} (2007), 245--258.

\bibitem{Stop1} C. Stoppato, \textit{Poles of regular quaternionic functions}. Complex Var. Elliptic Equ. \textbf{54} (2009), 1001--1018.
\bibitem{Stop4}  C. Stoppato, \textit{Regular Moebius transformations of the space of quaternions}. Ann. Global Anal. Geom. \textbf{39} (2011), 387--401.
\bibitem{Stop2} C. Stoppato, \textit{Singularities of slice regular functions}. Math. Nachr. \textbf{285} (2012), 1274--1293.
\bibitem{Stop3} C. Stoppato, \textit{A new series expansion for slice regular functions}. Adv. Math. \textbf{231} (2012), 1401--1416.




\end{thebibliography}

\end{document}